\documentclass[12pt]{article}
\usepackage[numbers,sort&compress]{natbib}
\usepackage{color}
\usepackage[colorlinks=true, citecolor=black, linkcolor=black, urlcolor=blue,anchorcolor=blue,citecolor=black, filecolor=blue, menucolor=blue, pagecolor=blue,pdfcreator=Me, pdfproducer=Me]{hyperref}
\usepackage{enumitem}
\usepackage{microtype}
\usepackage[margin=30mm]{geometry}
\usepackage{amsmath,amsthm,amsfonts,amssymb}
\usepackage{calc,graphicx,float}

\usepackage[noabbrev,capitalise]{cleveref}
\crefname{lem}{Lemma}{Lemmas}
\crefname{thm}{Theorem}{Theorems}
\crefname{prop}{Proposition}{Propositions}

\theoremstyle{plain}
\newtheorem{thm}{Theorem}
\newtheorem{lem}[thm]{Lemma}
\newtheorem{cor}[thm]{Corollary}
\newtheorem{prop}[thm]{Proposition}

\newtheorem{obs}[thm]{Observation}
\newtheorem{op}{Open Problem}

\theoremstyle{definition}

\setlength{\parindent}{0cm}
\setlength{\parskip}{2ex}
\renewcommand{\baselinestretch}{1.15}
\usepackage[hang]{footmisc} 

\setlength{\footnotesep}{\baselinestretch\footnotesep}
\renewcommand{\thefootnote}{\fnsymbol{footnote}}	

\newcommand\DateFootnote{
\begingroup
\renewcommand\thefootnote{}
\footnote{\today}
\setcounter{footnote}{0}
\vspace*{-3ex}
\endgroup}

\makeatletter
\renewcommand\section{\@startsection {section}{1}{\z@}%
                                   {-3ex \@plus -1ex \@minus -.2ex}%
                                   {2ex \@plus.2ex}%
                                   {\normalfont\large\bfseries}}
\renewcommand\subsection{\@startsection{subsection}{2}{\z@}%
                                     {-2.5ex\@plus -1ex \@minus -.2ex}%
                                     {1.5ex \@plus .2ex}%
                                     {\normalfont\normalsize\bfseries}}
\renewcommand\subsubsection{\@startsection{subsubsection}{3}{\z@}%
                                     {-2ex\@plus -1ex \@minus -.2ex}%
                                     {1ex \@plus .2ex}%
                                     {\normalfont\normalsize\bfseries}}
 \renewcommand\paragraph{\@startsection{paragraph}{4}{\z@}%
                                    {1.5ex \@plus.5ex \@minus.2ex}%
                                    {-1em}%
                                    {\normalfont\normalsize\bfseries}}
\renewcommand\subparagraph{\@startsection{subparagraph}{5}{\parindent}%
                                       {1.5ex \@plus.5ex \@minus .2ex}%
                                       {-1em}%
                                      {\normalfont\normalsize\bfseries}}
\makeatother

\renewcommand{\thefootnote}{\fnsymbol{footnote}}	
\newcommand{\N}{\mathbb{N}}

\newcommand{\ceil}[1]{\lceil{#1}\rceil}
\newcommand{\floor}[1]{\lfloor{#1}\rfloor}
\newcommand{\half}{\ensuremath{\protect\tfrac{1}{2}}}
\renewcommand{\geq}{\geqslant}
\renewcommand{\leq}{\leqslant}

\newcommand{\preproof}{\vspace*{-3ex}}

\DeclareMathOperator{\tw}{tw}
\DeclareMathOperator{\ttw}{2-tw}
\DeclareMathOperator{\pw}{pw}
\DeclareMathOperator{\bw}{bw}
\DeclareMathOperator{\cw}{cw}
\DeclareMathOperator{\crossnum}{cr}

\newcommand{\iters}{2k^2}
\renewcommand{\iters}{(k-1)^2}

\begin{document}
{\Large\bfseries\boldmath\scshape  Orthogonal Tree Decompositions of Graphs}

\DateFootnote

Vida Dujmovi{\'c}\,\footnotemark[1]
\quad Gwena\"el Joret\,\footnotemark[2]
\quad Pat Morin\,\footnotemark[3] 
\quad Sergey Norin\,\footnotemark[4] 
\quad David~R.~Wood\,\footnotemark[5] 

\footnotetext[1]{School of Computer Science and Electrical Engineering, University of Ottawa, 
Ottawa, Canada (\texttt{vida.dujmovic@uottawa.ca}). Supported by NSERC.}

\footnotetext[2]{D\'epartement d'Informatique, Universit\'e Libre de Bruxelles, Belgium (\texttt{gjoret@ulb.ac.be}). Supported by an Action de Recherches Concert\'ees grant from the Wallonia-Brussels Federation of Belgium}

\footnotetext[3]{School of  Computer Science, Carleton University,  Ottawa, Canada (\texttt{morin@scs.carleton.ca}). Research  supported by NSERC.}

\footnotetext[4]{Department of Mathematics and Statistics, McGill University, Montr\'eal, Canada (\texttt{snorin@math.mcgill.ca}). Supported by NSERC grant 418520.}

\footnotetext[5]{School of Mathematical Sciences, Monash University, Melbourne, Australia (\texttt{david.wood@monash.edu}). Supported by the Australian Research Council.}

\emph{Abstract.} 
This paper studies graphs that have two tree decompositions with the property that every bag from the first decomposition has a bounded-size intersection with every bag from the second decomposition. We show that every graph in each of the following classes has a tree decomposition and a linear-sized path decomposition with bounded intersections: (1) every proper minor-closed class, (2) string graphs with a linear number of crossings in a fixed surface, (3) graphs with linear crossing number in a fixed surface. Here `linear size' means that the total size of the bags in the path decomposition is $O(n)$ for $n$-vertex graphs. We then show that every $n$-vertex graph that has a tree decomposition and a linear-sized path decomposition with bounded intersections has $O(\sqrt{n})$ treewidth. As a corollary, we conclude a new lower bound on the crossing number of a graph in terms of its treewidth. Finally, we consider graph classes that have two path decompositions with bounded intersections. Trees and outerplanar graphs have this property. But for the next most simple class, series parallel graphs,  we show that no such result holds.

\renewcommand{\thefootnote}{\arabic{footnote}}

\newpage
\section{Introduction}
\label{Introduction}


A \emph{tree decomposition} represents the vertices of a graph as subtrees of a tree, so that the subtrees corresponding to adjacent vertices intersect. The \emph{treewidth} of a graph $G$ is the minimum taken over all tree decompositions of $G$, of the maximum number of pairwise intersecting subtrees minus 1. Treewidth measures how similar a given graph is to a tree. It is a key measure of the complexity of a graph and is of fundamental importance in algorithmic graph theory and structural graph theory. For example, treewidth is a key parameter in Robertson--Seymour graph minor theory \citep{RS-GraphMinors}, and many NP-complete problems are solvable in polynomial time on graphs of bounded treewidth \citep{ParameterizedAlgorithms}. 

The main idea in this paper is to consider two tree decompositions of a graph, and then measure the sizes of the intersection of bags from the first decomposition with bags from the second decomposition. Intuitively, one can think of the bags from the first decomposition as being horizontal, and the bags from the second  decomposition as being vertical, so that the two tree decompositions are `orthogonal' to each other. We are interested in which graphs have two tree decompositions such that every bag from the first decomposition has a bounded-size intersection with every bag from the second decomposition. This idea is implicit in recent work on layered tree decompositions (see \cref{LayeredTreewidth}), and was made explicit in the recent survey by \citet{Norine15}. 

Grid graphs illustrate this idea well; see \cref{Grid}. Say $G$ is the $n\times n$ planar grid graph. The sequence of consecutive pairs of columns determines a tree decomposition, in fact, a path decomposition with bags of size $2n$. Similarly, the sequence of consecutive pairs of rows determines a path decomposition with bags of size $2n$. Observe that the intersection of a bag from the first decomposition with a bag from the second decomposition has size 4. It is well known \citep{HW17} that $G$ has treewidth $n$, which is unbounded. But as we have shown, $G$ has two tree decompositions with bounded intersections. This paper shows that many interesting graph classes with unbounded treewidth have two tree decompositions with bounded intersections (and with other useful properties too).

\begin{figure}
\centering
\includegraphics{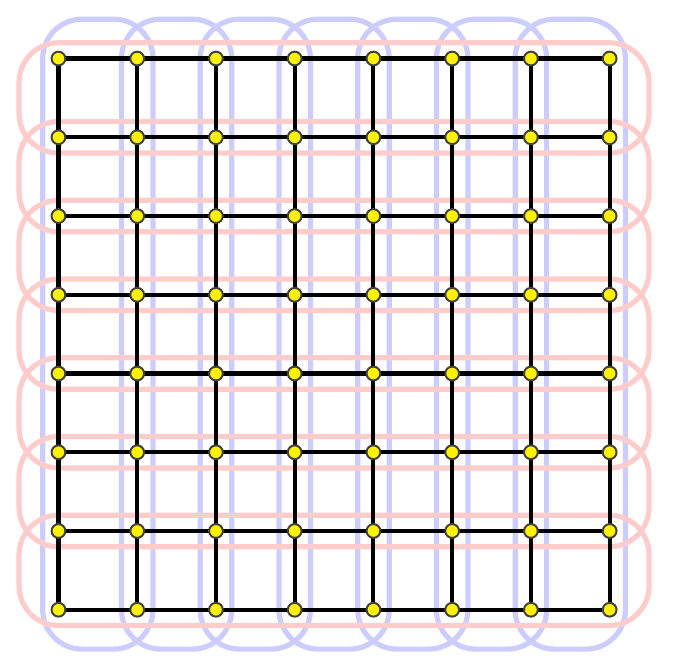}
\caption{ \label{Grid} Two 4-orthogonal path decompositions of the grid graph.}
\end{figure}

Before continuing, we formalise these ideas. A \emph{tree decomposition} of  a graph $G$ is given by a tree $T$ whose nodes index a collection $(T_x\subseteq V(G):x\in V(T))$ of sets of vertices in $G$ called  \emph{bags}, such that (1) for every edge $vw$ of $G$, some bag $T_x$ contains both $v$ and $w$, and (2) for every vertex $v$ of $G$, the set $\{x\in V(T):v\in T_x\}$ induces a non-empty (connected) subtree of $T$.
For brevity, we say that $T$ is a tree decomposition (with the bags $T_x$ implicit). 
The \emph{width} of a tree decomposition $T$ is $\max\{|T_x|-1:x\in V(T)\}$, and the \emph{treewidth} $\tw(G)$ of a graph $G$ is the minimum width of the tree decompositions of $G$. A \emph{path decomposition} is a tree decomposition in which the underlying tree is a path. We describe a path decomposition simply by the corresponding sequence of bags. The \emph{pathwidth} $\pw(G)$ of a graph $G$ is the minimum width of the path decompositions of $G$. Two tree decompositions $A$ and $B$ of a graph $G$ are \emph{$c$-orthogonal} if $|A_x\cap B_y|\leq c$ for all $x\in V(A)$ and $y\in V(B)$. 

It turns out that not only the size of bag intersections is important when considering orthogonal tree decompositions. A key parameter is the total size of the bags in a tree decomposition $T$, which we call the \emph{magnitude}, formally defined to be $\sum_{x\in V(T)}|T_x|$. For example, consider the complete bipartite graph $K_{n,n}$. Say $V=\{v_1,\dots,v_n\}$ and $W=\{w_1,\dots,w_n\}$ are the two colour classes. Then 
\begin{align*}
P & =(V\cup\{w_1\}, V\cup\{w_2\},\dots,V\cup\{w_n\})\text{ and }\\
Q& =(W\cup\{v_1\}, W\cup\{v_2\},\dots,W\cup\{v_n\})
\end{align*} 
are path decompositions of $K_{n,n}$, such that the intersection of each bag of $P$ with each bag of $Q$ has exactly two vertices. However, both $P$ and $Q$ have magnitude $n(n+1)$. 
On the other hand, we prove in \cref{Extremal} that if two tree decompositions of a graph $G$ have bounded intersections and one has linear magnitude, then $G$ has a linear number of edges. 
Here `linear' means $O(n)$ for $n$-vertex graphs.

Our main results show that every graph in each of the following classes has a tree decomposition and a linear-magnitude path decomposition with bounded intersections:
\begin{itemize}[itemsep=0ex,topsep=0ex]
\item every proper minor-closed class  (\cref{MinorClosed}),
\item string graphs with a linear number of crossings in a fixed surface (\cref{StringGraphs}),
\item graphs with linear crossing number in a fixed surface (\cref{CrossingNumber}),
\end{itemize}
The latter two examples highlight that orthogonal decompositions are of interest well beyond the world of minor-closed classes.  We also show that every graph that has a tree decomposition and a linear-magnitude path decomposition with bounded intersections has $O(\sqrt{n})$ treewidth. This result is immediately applicable to each of the above three classes. As a corollary, we conclude a new lower bound on the crossing number of a graph in terms of its treewidth (\cref{CrossingNumber}).

Treewidth is intrinsically related to graph separators. A set $S$ of vertices in a graph $G$ is a \emph{separator} of $G$ if each component of $G-S$ has at most $\half |V(G)|$ vertices. Graphs with small treewidth have small separators, as shown by the following result of  \citet{RS-GraphMinorsII-JAlg86}:

\begin{lem}[\citep{RS-GraphMinorsII-JAlg86}] 
\label{RS}
Every graph $G$ has a separator of size at most $\tw(G)+1$. 
\end{lem}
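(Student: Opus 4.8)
The plan is to extract the separator directly from an optimal tree decomposition. Let $T$ be a tree decomposition of $G$ of width $\tw(G)$, with bags $(B_x : x\in V(T))$. First I would convert the vertex set of $G$ into a weight function on $V(T)$: root $T$ arbitrarily and assign each vertex $v\in V(G)$ to the node $\phi(v)$ that is closest to the root among all nodes $x$ with $v\in B_x$. This set of nodes is non-empty and induces a subtree of $T$, so the closest node is unique and well defined. Setting $w(x):=|\phi^{-1}(x)|$ then gives $\sum_{x\in V(T)}w(x)=|V(G)|$.

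Next I would locate a centroid of $(T,w)$: a node $x^*$ such that every component of $T-x^*$ has total weight at most $\half|V(G)|$. Such a node exists by the standard argument for weighted trees: walk through $T$, at each node stepping toward a component of $T$ minus that node of weight exceeding $\half|V(G)|$ if one exists (there is at most one such component, since two disjoint sets cannot both exceed half of $|V(G)|$). After each step the component just vacated has weight strictly below $\half|V(G)|$, so the walk never backtracks; as $T$ is finite it terminates, necessarily at a centroid. I claim $S:=B_{x^*}$ is the desired separator. Its size is at most $\tw(G)+1$ by the width bound, so only the separation property remains to check.

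To verify it, let $C$ be the vertex set of a component of $G-S$, and set $U:=\{x\in V(T):B_x\cap C\neq\emptyset\}$. Since $C$ induces a connected subgraph of $G$, and for each edge $uv$ of $G[C]$ some bag contains both $u$ and $v$, the set $U$ is the union over the connected graph $G[C]$ of the subtrees $\{x:v\in B_x\}$; hence $U$ induces a connected subtree of $T$. Moreover $x^*\notin U$, because $B_{x^*}\cap C=S\cap C=\emptyset$. Therefore $U$ is contained in a single component $T'$ of $T-x^*$. Every $v\in C$ satisfies $\{x:v\in B_x\}\subseteq U\subseteq V(T')$, so $\phi(v)\in V(T')$, and consequently $C\subseteq\bigcup_{x\in V(T')}\phi^{-1}(x)$, which gives $|C|\leq\sum_{x\in V(T')}w(x)\leq\half|V(G)|$. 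As $C$ was an arbitrary component of $G-S$, the bag $S$ is a separator of $G$ of size at most $\tw(G)+1$.

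The only genuinely delicate point is the argument in the last paragraph: one must be sure that the ``trace'' $U$ of a component $C$ is connected in $T$ and avoids $x^*$, so that it — and with it every vertex of $G$ assigned to a node of $U$ — is confined to one side of $x^*$. Everything else (existence of an optimal tree decomposition, existence of the weighted centroid, and the width bound $|B_{x^*}|\leq\tw(G)+1$) is routine; the trivial case $|V(G)|\leq1$ can be handled separately or simply absorbed into the general argument.
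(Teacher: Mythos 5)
Your proof is correct: the weighted-centroid argument on an optimal tree decomposition, together with the observation that the trace of a component of $G-B_{x^*}$ is a connected subtree of $T-x^*$, is exactly the standard proof of this fact. The paper does not prove \cref{RS} at all (it is quoted from Robertson and Seymour), and your argument is essentially the classical one from that source, so there is nothing further to reconcile.
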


Our treewidth bounds and \cref{RS} give $O(\sqrt{n})$ separator results for each of the above three classes. Also note that a converse to \cref{RS}  holds: graphs in which every subgraph has a small separator have small treewidth \citep{DN14,Reed97}. 

The paper then considers graph classes that have two path decompositions with bounded intersections. Trees and outerplanar graphs have this property. But for the next most simple class, series parallel graphs,  we show that no such result holds (\cref{TwoPaths}). The paper concludes by discussing connections between orthogonal tree decompositions and boxicity (\cref{BoxicityConnections}) and graph colouring (\cref{ColouringConnections}).


\section{Layered Treewidth}
\label{LayeredTreewidth}

The starting point for the study of orthogonal tree decompositions is the notion of a layered tree decomposition, introduced independently by \citet{DMW17} and \citet{Shahrokhi13}. Applications of layered treewidth include nonrepetitive graph colouring  \citep{DMW17}, queue layouts, track layouts and 3-dimensional graph drawings \citep{DMW17}, book embeddings \citep{DF16}, and intersection graph theory \citep{Shahrokhi13}.

A \emph{layering} of a graph $G$ is a partition $(V_0,V_1,\dots,V_t)$ of $V(G)$ such that for every edge $vw\in E(G)$, if $v\in V_i$ and $w\in V_j$, then $|i-j|\leq 1$. Each set $V_i$ is called a \emph{layer}.  For example, for a vertex $r$ of a connected graph $G$, if $V_i$ is the set of vertices at distance $i$ from $r$, then $(V_0,V_1,\dots)$ is a layering of $G$. 

The \emph{layered width} of a tree decomposition $(T_x:x\in V(T))$ of a graph $G$ is the minimum integer $\ell$ such that, for some layering $(V_0,V_1,\dots,V_t)$ of $G$, each bag $T_x$ contains at most $\ell$ vertices in each layer $V_i$. The \emph{layered treewidth} of a graph $G$ is the minimum layered width of a tree decomposition of $G$. Note that the trivial layering with all vertices in one layer shows that layered treewidth is at most treewidth plus 1. The \emph{layered pathwidth} of a graph $G$ is the minimum layered width of a path decomposition of $G$; see \citep{BDDEW}.

While $n$-vertex planar graphs may have treewidth as large as $\sqrt{n}$, \citet{DMW17} proved the following\footnote{The \emph{Euler genus} of an orientable surface with $h$ handles is $2h$. The \emph{Euler genus} of  a non-orientable surface with $c$ cross-caps is $c$. The \emph{Euler genus} of a graph $G$ is the minimum Euler genus of a surface in which $G$ embeds (with no crossings).}:

\begin{thm}[\citep{DMW17}]
\label{DMW} 
Every planar graph has layered treewidth at most $3$. More generally, 
every graph with Euler genus $g$ has layered treewidth at most $2g+3$.
\end{thm}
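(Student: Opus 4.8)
The plan is to read off \emph{both} the layering and the tree decomposition from a single BFS spanning tree of $G$ after fixing an embedding. Two routine reductions come first. Layered treewidth is monotone under taking subgraphs (restrict the layering, and delete the absent vertices from each bag), and components may be handled separately (align the layers and join the component trees arbitrarily); so we may assume $G$ is connected and edge-maximal, and then, passing to a minimum-genus embedding (which is necessarily cellular), that $G$ triangulates a surface $\Sigma$ whose Euler genus $g'$ equals that of $G$; in particular $g'\leq g$. Fix $r\in V(G)$, let $T$ be a BFS spanning tree rooted at $r$, and let $(V_0,V_1,\dots,V_t)$ be the corresponding BFS layering, so that every edge of $G$ joins vertices in the same or consecutive layers. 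For $v\in V(G)$ let $P_v$ be the $v$-to-$r$ path in $T$; since the parent of a vertex lies one layer closer to $r$, we have $|V(P_v)\cap V_i|\leq 1$ for all $i$, so any \emph{tripod} $V(P_x)\cup V(P_y)\cup V(P_z)$ meets each layer in at most three vertices.

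For the planar case $g'=0$, recall that the duals of the non-tree edges of $G$ form a spanning tree $T^{\ast}$ of the dual graph $G^{\ast}$. We index a tree decomposition of $G$ by $T^{\ast}$: to each face $f=xyz$ we assign the bag $B_f:=V(P_x)\cup V(P_y)\cup V(P_z)$. Every edge of $G$ bounds a face, so the edge axiom is immediate, and $v\in B_f$ holds exactly when the subtree $T_v$ of $T$ rooted at $v$ meets $f$. The crux is to check that $\{f:v\in B_f\}$ induces a subtree of $T^{\ast}$: the faces incident to the connected set $V(T_v)$ `wrap around' it and form a connected subgraph of $G^{\ast}$, and one must verify that consecutive such faces are in fact joined in $T^{\ast}$, not merely in $G^{\ast}$ --- this is where planarity enters. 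Granting this, $(B_f:f\in V(T^{\ast}))$ is a tree decomposition all of whose bags are tripods, so its layered width with respect to $(V_i)$ is at most $3$.

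For $g'\geq 1$ the same dual construction needs a correction. By Euler's formula the duals of the non-tree edges now form a connected spanning subgraph of $G^{\ast}$ with exactly $g'$ independent cycles; delete one edge from each, obtaining a spanning tree $T^{\ast}$ of $G^{\ast}$ together with primal edges $e_1=u_1v_1,\dots,e_{g'}=u_{g'}v_{g'}$. Put $Z:=\bigcup_{i=1}^{g'}\bigl(V(P_{u_i})\cup V(P_{v_i})\bigr)$, a set meeting each layer in at most $2g'$ vertices, and use the bags $B_f\cup Z$ indexed by $T^{\ast}$. Each bag now meets each layer in at most $3+2g'\leq 2g+3$ vertices; it remains to recheck the subtree axiom. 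Deleting a dual edge $e_i^{\ast}$ can disconnect $\{f:v\in B_f\}$ only if $T_v$ meets the fundamental cycle $P_{u_i}\cup P_{v_i}\cup\{e_i\}$, and one checks that this forces $v\in V(P_{u_i})\cup V(P_{v_i})\subseteq Z$; for such $v$ the family $\{f:v\in B_f\cup Z\}$ is all of $V(T^{\ast})$ and hence connected, whereas for $v\notin Z$ no disconnection occurs and the planar argument applies verbatim.

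I expect the main obstacle to be the subtree axiom in the planar case: turning `the faces around $V(T_v)$ are connected in $G^{\ast}$' into `they are connected in the dual spanning tree $T^{\ast}$' requires a genuine planarity argument about the cyclic order of faces around $T_v$ and about how $T^{\ast}$ traverses that region. Once this is settled the bounded-genus step is largely bookkeeping, the one subtle point being that the connectivity `breaks' created by deleting the $g'$ dual edges are confined to vertices of $Z$ --- which is precisely what keeps the overhead additive ($+2g$) rather than multiplicative.
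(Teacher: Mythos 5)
The paper does not actually prove \cref{DMW}; it quotes it from \citet{DMW17}, so I am comparing your sketch with the proof given there. Your overall scheme---BFS layering, bags built from vertical (root-to-vertex) paths in a BFS tree, three such paths in the planar case, plus $2g$ additional vertical paths to pay for the genus---is exactly the right shape and yields the stated bounds. Where you differ from \citet{DMW17} is in how the tree decomposition is produced: there the planar case is done by an induction over near-triangulations (in the spirit of the classical radius-versus-treewidth argument, growing bags that consist of at most three vertical paths), and Euler genus $g$ is handled by cutting the surface along $2g$ vertical paths to reduce to the planar case; you instead read the decomposition off in one shot from the tree--cotree structure, indexing bags by the dual spanning tree $T^*$ and taking $B_f$ to be the three root paths of the corners of $f$. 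Your route is a legitimate alternative and arguably more self-contained, and, pleasantly, your genus case needs no surgery at all---only the bookkeeping of which dual edges were discarded.

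Two points need attention. First, the ``crux'' you defer (connectivity in $T^*$ of $\{f: v\in B_f\}$) is true and has a short proof along the lines you gesture at: take a regular neighbourhood of the descendant subtree $T_v$; since a tree is contractible this neighbourhood is a disc (on any surface, so planarity is not actually needed here), and its boundary is a single closed curve that meets exactly the faces having a corner among the descendants of $v$, crossing precisely the edge-ends at descendants of $v$ that do not lie in $T_v$. All crossed edges are non-tree edges except the single crossing of the parent edge of $v$, so walking along this boundary connects all of $\{f:v\in B_f\}$ within $T^*$ with at most one break, which cannot disconnect a closed walk. The same walk handles your genus case: a discarded edge $e_i$ creates an extra break only if it is incident to a descendant of $v$, which forces $v\in V(P_{u_i})\cup V(P_{v_i})\subseteq Z$, exactly as you claim; for $v\in Z$ every bag contains $v$. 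Second, your reduction ``edge-maximal, hence a minimum-genus embedding is a triangulation'' is not justified as stated: in a non-triangular face all potential chords may already be edges of $G$ embedded elsewhere, so abstract edge-maximality does not force triangular faces. The standard fix is to take any embedding in a surface of Euler genus at most $g$ and add new vertices and edges inside the faces to obtain a (simple) triangulation of the same surface containing $G$ as a subgraph; since you already invoke subgraph-monotonicity of layered treewidth, this repairs the reduction without further changes.
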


Layered treewidth is related to local treewidth, which was first introduced by \citet{Eppstein-Algo00} under the guise of the `treewidth-diameter' property. A graph class $\mathcal{G}$ has \emph{bounded local treewidth} if there is a function $f$ such that for every graph $G$ in $\mathcal{G}$, for every vertex $v$ of $G$ and for every integer $r\geq0$, the subgraph of $G$ induced by the vertices at distance at most $r$ from $v$ has treewidth at most $f(r)$; see \citep{Grohe-Comb03,DH-SJDM04,DH-SODA04,Eppstein-Algo00}. If $f(r)$ is a linear function, then  $\mathcal{G}$ has \emph{linear local treewidth}. \citet{DMW17} observed that if every graph in some class $\mathcal{G}$ has layered treewidth at most $k$, then $\mathcal{G}$ has linear local treewidth with $f(r) \leq k(2r+1) -1$. \citet{DMW17} also proved the following converse result for minor-closed classes, where a graph $G$ is \emph{apex} if $G-v$ is planar for some vertex $v$. (Earlier, \citet{Eppstein-Algo00} proved that (b) and (d) are equivalent, and  \citet{DH-SODA04} proved that (b) and (c) are equivalent.)

\begin{thm}[\citep{DMW17,DH-SODA04,Eppstein-Algo00}]
\label{MinorClosedLayered}
The following are equivalent for a minor-closed class $\mathcal{G}$ of graphs:
\begin{enumerate}[label=(\alph*),itemsep=0ex,topsep=0ex]
\item $\mathcal{G}$ has bounded layered treewidth.
\item $\mathcal{G}$ has bounded local treewidth.
\item $\mathcal{G}$ has linear local treewidth.
\item $\mathcal{G}$ excludes some apex graph as a minor. 
\end{enumerate}
\end{thm}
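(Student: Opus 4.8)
Three of the four conditions are already known to be equivalent: $(b)\Leftrightarrow(c)$ is a theorem of \citet{DH-SODA04} and $(b)\Leftrightarrow(d)$ is a theorem of \citet{Eppstein-Algo00}, so $(b)$, $(c)$ and $(d)$ are equivalent. Hence it suffices to bring in $(a)$, for which I would prove $(a)\Rightarrow(c)$ and $(d)\Rightarrow(a)$. The first is the easy direction, essentially noted above: if $G\in\mathcal{G}$ has a tree decomposition of layered width at most $k$ with respect to some layering, then the ball of radius $r$ around any vertex meets at most $2r+1$ consecutive layers, so restricting the decomposition to that ball has width at most $k(2r+1)-1$; thus $\mathcal{G}$ has linear local treewidth.

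For $(d)\Rightarrow(a)$, suppose $\mathcal{G}$ excludes the apex graph $H$ as a minor. I would invoke the graph minor structure theorem of \citet{RS-GraphMinors} in its apex-minor-free form, in which the almost-embeddable pieces have \emph{no} apex vertices (this refinement is what distinguishes apex-minor-free classes; see \citet{DH-SODA04}): there is a constant $k=k(H)$ such that every $G\in\mathcal{G}$ has a tree decomposition $(B_x:x\in V(\mathcal{T}))$ of adhesion less than $k$ in which every torso $\widehat{G}_x$ is obtained from a graph of Euler genus at most $k$ by adding at most $k$ vortices, each of width at most $k$. The first step is to show that each torso has bounded layered treewidth: the bounded-genus part has layered treewidth at most $2k+3$ by \cref{DMW}, and each vortex is absorbed with only a bounded additive cost by threading its path decomposition (indexed along a face-boundary walk) through the layered tree decomposition of the surface part. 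This produces, for each $x$, a layering $\sigma_x$ of $\widehat{G}_x$ and a tree decomposition $\mathcal{T}_x$ of $\widehat{G}_x$ of bounded layered width with respect to $\sigma_x$.

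The second step is to assemble these into a single layered tree decomposition of $G$. I would expand each node $x$ of $\mathcal{T}$ into $\mathcal{T}_x$, and for $y$ a child of $x$ link $\mathcal{T}_x$ to $\mathcal{T}_y$ through bags containing the adhesion clique $S=B_x\cap B_y$; this is the standard way in which the torsos recombine into $G$. For the layering, root $\mathcal{T}$ and propagate a layering $\sigma$ of $G$ top-down, maintaining for each processed node $x$ an integer shift $c_x$ so that $\sigma$ and $\sigma_x+c_x$ agree on $B_x$ up to a bounded additive error. Since $S$ is a clique of size less than $k$ in both $\widehat{G}_x$ and $\widehat{G}_y$, each of $\sigma_x$ and $\sigma_y$ is nearly constant on $S$, so $c_y$ can be chosen to match $\sigma$ on $S$ up to a bounded error, and $\sigma$ is then extended to the rest of $B_y$ by $\sigma_y+c_y$. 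Because $\sigma$ differs from $\sigma_x+c_x$ by only a bounded amount on each torso, every bag coming from $\mathcal{T}_x$ still has boundedly many vertices in each $\sigma$-layer, so the assembled tree decomposition of $G$ has bounded layered width. Together with $(b)\Leftrightarrow(c)\Leftrightarrow(d)$, this closes the cycle.

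The main obstacle is keeping the additive error in the second step bounded \emph{independently of the depth of} $\mathcal{T}$: a naive shift-propagation lets the error roughly double from parent to child. One remedy is to take $\sigma_x$ to be a BFS layering of $\widehat{G}_x$ rooted at a vertex of the parent adhesion set, so that the shifts cannot drift; another is to re-define $\sigma$ on the whole torso rather than only on the shared clique. A secondary technical point is the treatment of vortices in the first step: a vortex vertex not on the face boundary has no a priori layer and must be inserted consistently with both the vortex's path decomposition and the surface layering, so the absorption must be done with care rather than by simply overlaying the two decompositions.
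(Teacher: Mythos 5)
The paper does not actually prove this theorem: it is quoted from the literature, with (b)$\Leftrightarrow$(d) attributed to \citet{Eppstein-Algo00}, (b)$\Leftrightarrow$(c) to \citet{DH-SODA04}, and the layered-treewidth part to \citet{DMW17}. Your reduction of the problem to (a)$\Rightarrow$(c) plus (d)$\Rightarrow$(a), and your (a)$\Rightarrow$(c) argument (a ball of radius $r$ meets at most $2r+1$ consecutive layers, giving width $k(2r+1)-1$), coincide with what the paper records. The real content is (d)$\Rightarrow$(a), and your plan follows the same route as \citet{DMW17}, whose machinery is reproduced in \cref{MinorClosed}: a structure theorem for apex-minor-free graphs, bounded layered width for the pieces, and a clique-sum assembly that keeps a single global layering.

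There are, however, genuine gaps in that plan. First, the structure theorem you invoke is not in its correct form: the known refinement for apex-minor-free classes does not remove the apex vertices from the almost-embeddable pieces, it only forces them to be adjacent to vortex vertices (this is precisely the ``stronger result'' of \citet{DMW17} mentioned after \cref{NoApexGood}). Handling these apexes, or arguing that they can be absorbed into (suitably merged) vortices, is part of the proof and cannot be assumed away; similarly, the absorption of vortices into the layered decomposition of the surface part is the technically heavy lemma of \citet{DMW17} and is only asserted in your sketch (its cost is multiplicative in the vortex width, cf.\ the bound $(k+1)(2g+2p+3)$ in \cref{NoApexGood}, not additive). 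Second, and more importantly, the clique-sum step is not yet a proof. A layering assigns each vertex exactly one layer, so letting $\sigma$ and $\sigma_y+c_y$ ``agree up to a bounded additive error'' on the adhesion clique does not by itself yield a layering of $G$: the partition of the clique between two consecutive layers must match exactly on both sides, or else you must coarsen layers at the end after showing the discrepancy is bounded uniformly over the whole tree, which you only gesture at. The clean device, used by \citet{DMW17} and by \cref{GMSTinduction} here, is to strengthen the statement about the pieces to the $\ell$-good form: for \emph{every} clique $K$ there is a tree decomposition of bounded layered width with respect to a layering whose \emph{first layer is} $K$, which makes the two layerings align exactly at the sum with no drift. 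Your BFS-from-the-adhesion remedy silently assumes exactly this strengthening -- that bounded layered width survives re-rooting the layering at a prescribed clique -- and that is the content of \cref{NoApexGood}, not a consequence of \cref{DMW} plus threading; as written, this step is the missing core of the argument.
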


\citet{DEW17} observed that such a converse result does not hold for non-minor-closed classes. In particular, 3-dimensional grid graphs have quadratic local treewidth and unbounded layered treewidth. 

A number of non-minor-closed classes also have bounded layered treewidth. \citet{DEW17} gave the following two examples. A graph is \emph{$(g,k)$-planar} if it can be drawn in a surface of Euler genus at most $g$ with at most $k$ crossings on each edge. \citet{DEW17} determined an optimal bound on the layered treewidth and treewidth of such graphs. 

\begin{thm}[\citep{DEW17}]
\label{gkPlanar}
Every $(g,k)$-planar graph $G$ has layered treewidth at most $(4g+6)(k+1)$ and treewidth at most $2\sqrt{(4g+6)(k+1)n}$. Conversely, for all $g,k\geq 0$ and infinitely many $n$ there is an $n$-vertex $(g,k)$-planar graph with treewidth 
$\Omega(\sqrt{(g+1)(k+1)n})$ and layered treewidth $\Omega((g+1)(k+1))$.
\end{thm}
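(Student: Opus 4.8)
The plan is to prove the forward direction (the upper bounds) by planarising the drawing and invoking \cref{DMW}, and the converse by an explicit construction. For the upper bounds, fix a drawing of the $(g,k)$-planar graph $G$ in a surface of Euler genus $g$ with at most $k$ crossings per edge and planarise it: replace each crossing by a new \emph{dummy} vertex, obtaining a graph $G^{+}$ embedded in the same surface without crossings, so $G^{+}$ has Euler genus at most $g$. Each edge $e=vw$ of $G$ becomes a path $P_e$ of length at most $k+1$ in $G^{+}$ whose internal vertices are dummies, and each dummy vertex is internal to exactly two of the paths $P_e$. By \cref{DMW}, $G^{+}$ has a tree decomposition $(B_x:x\in V(T))$ of layered width at most $2g+3$ with respect to some layering $(L_0,L_1,\dots)$.

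Next I would transfer this structure to $G$. Since $P_e$ has length at most $k+1$ and the endpoints of each edge of $G^{+}$ lie in layers of $(L_j)$ differing by at most $1$, the endpoints of every edge of $G$ lie in layers differing by at most $k+1$; hence grouping the $L_j$ into consecutive blocks of $k+1$ gives a valid layering of $G$. For the tree decomposition, keep $T$ and, for each edge $e=vw$ of $G$ crossed at least once, with $P_e=(v=u_0,u_1,\dots,u_m=w)$ (so $m\geq2$), add both $v$ and $w$ to $B_x$ for every $x$ in the connected subtree $S_e:=\bigcup_{i=1}^{m-1}\{x:u_i\in B_x\}$. Because $\{x:u_1\in B_x\}$ meets the subtree of $v$ and $\{x:u_{m-1}\in B_x\}$ meets the subtree of $w$, connectivity is preserved and every crossed edge becomes covered, while uncrossed edges were already covered in $G^{+}$. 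Since each dummy vertex is internal to only two of the paths $P_e$, a charging argument bounds the number of vertices a bag gains within any block of $k+1$ layers by $O\big((g+1)(k+1)\big)$, giving layered treewidth $O\big((g+1)(k+1)\big)$. The treewidth bound $2\sqrt{(4g+6)(k+1)n}$ then follows from the standard fact that an $n$-vertex graph of layered treewidth at most $\ell$ has treewidth at most $2\sqrt{\ell n}$ (some window of $\lceil\sqrt{n\ell}\,\rceil$ consecutive layers has at most $\sqrt{n\ell}$ vertices and separates the graph; recurse). I expect the main obstacle of this half to be squeezing the constant in the layered-treewidth bound down to the sharp value $(4g+6)(k+1)=2(2g+3)(k+1)$, which needs care about exactly which bags of $S_e$ receive the endpoints of $e$ and about how the layers of $G^{+}$ are blocked.

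For the converse, I would start from a grid-like graph $M_g$ drawn without crossings on a surface of Euler genus $g$ with $N$ vertices, bounded degree, treewidth $\Theta\big(\sqrt{(g+1)N}\big)$, and --- packing the handles of the surface efficiently --- diameter $\Theta\big(\sqrt{N/(g+1)}\big)$, and set $G_{g,k}:=M_g\boxtimes P_{k+1}$, the strong product with the path on $k+1$ vertices. For $k+1$ at most the diameter of $M_g$, this graph has $n=\Theta\big((k+1)N\big)$ vertices and treewidth $\Theta\big((k+1)\sqrt{(g+1)N}\big)=\Theta\big(\sqrt{(g+1)(k+1)n}\big)$ (using a large grid minor of $M_g$), and it can be drawn on the same surface with $O(k)$ crossings per edge by replacing each bounded-degree vertex of $M_g$ by a short path and joining the paths of adjacent vertices through planar ``ladders''; rescaling $k$ by a constant then makes it $(g,k)$-planar. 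Finally $M_g$ itself has layered treewidth $\Omega(g+1)$: layered treewidth $\ell$ would force every ball of radius $r$ to have treewidth at most $\ell(2r+1)-1$, so taking $r$ equal to the diameter of $M_g$ gives $\ell=\Omega\big(\tw(M_g)/\operatorname{diam}(M_g)\big)=\Omega(g+1)$; since the strong product with $P_{k+1}$ multiplies both treewidth and layered treewidth by $\Theta(k+1)$, $G_{g,k}$ has layered treewidth $\Omega\big((g+1)(k+1)\big)$. The main obstacles in this half are verifying the two quantitative properties of $M_g$ and the $O(k)$ crossings-per-edge bound for the blow-up.
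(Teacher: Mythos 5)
First, note that the paper does not prove this statement: it is quoted from \citep{DEW17}. The closest arguments in the paper itself are the planarisation proofs of \cref{LayeredTreewidthString}, \cref{StringStructure} and \cref{CrossingsConstruction}, and your upper-bound half follows exactly that template, so it is essentially the standard route and is sound as far as it goes: the blocked layering is valid, and your modified decomposition (adding both endpoints of each crossed edge along the union $S_e$ of the dummy subtrees) is a correct tree decomposition. But, as you concede, the bookkeeping only yields $O\bigl((g+1)(k+1)\bigr)$ with a constant several times worse than the stated $(4g+6)(k+1)$: each dummy in a bag can force up to four added endpoints, and those endpoints can live up to $k$ layers away from the dummy, so your charging window is roughly $3(k+1)$ layers. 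The sharp constant $2(2g+3)(k+1)$ is obtained by the trick used in \cref{CrossingsConstruction}: orient the edges and replace each crossing vertex by only the two \emph{tails} of the edges crossing there (two vertices per dummy, not four endpoints spread over all of $S_e$), together with a layering of $G$ defined from the first layer met by the vertices assigned to each $v$ (as in the function $f(v)$ in the proof of \cref{LayeredTreewidthString}). So this half is a constant-factor shortfall rather than a conceptual gap.

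The converse half has genuine gaps. The crucial step, $\tw(M_g\boxtimes P_{k+1})=\Omega\bigl((k+1)\sqrt{(g+1)N}\bigr)$, is not established: lower bounds on the treewidth of a strong product are not automatic, and your proposed route ``using a large grid minor of $M_g$'' loses exactly the factor you need, because an $N$-vertex graph of Euler genus $g$ and treewidth $\Theta(\sqrt{(g+1)N})$ is only guaranteed a grid minor of order $\Omega(\sqrt{(g+1)N}/(g+1))=\Omega(\sqrt{N/(g+1)})$ (the linear grid-minor constant degrades with genus), so the product argument would only give $\Omega\bigl((k+1)\sqrt{N/(g+1)}\bigr)$, i.e.\ $\Omega(\sqrt{(k+1)n/(g+1)})$ rather than $\Omega(\sqrt{(g+1)(k+1)n})$; a direct bramble/well-linkedness or congestion-lifting argument would be needed, and none is given. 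Second, the auxiliary graph $M_g$ with treewidth $\Theta(\sqrt{(g+1)N})$, bounded degree, Euler genus at most $g$ \emph{and} diameter $\Theta(\sqrt{N/(g+1)})$ is asserted but not constructed; expander-based constructions achieve the treewidth but miss the diameter by a logarithmic factor, which then degrades your diameter-based layered-treewidth bound to $\Omega\bigl((g+1)(k+1)/\log(g+2)\bigr)$. Note that the diameter detour is unnecessary anyway: once a treewidth lower bound $\Omega(\sqrt{(g+1)(k+1)n})$ is in hand, \cref{Norine} gives layered treewidth at least $\tw(G)^2/(4n)=\Omega\bigl((g+1)(k+1)\bigr)$ directly, exactly as the paper does in \cref{SegmentIntersectiongraphLowerBound}. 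Finally, the claim that the product can be drawn in the genus-$g$ surface with $O(k)$ crossings per edge is plausible for bounded degree but is only sketched. The actual proof in \citep{DEW17} avoids all of this by exhibiting an explicit grid-with-handles-and-crossings construction whose treewidth is bounded below via a separator argument, rather than by taking strong products.
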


Map graphs are defined as follows. Start with a graph $G_0$ embedded in a surface of Euler genus $g$, with each face labelled a `nation' or a `lake', where each vertex of $G_0$ is incident with at most $d$ nations. Define a graph $G$ whose vertices are the nations of $G_0$, where two vertices are adjacent in $G$ if the corresponding faces in $G_0$ share a vertex. Then $G$ is called a \emph{$(g,d)$-map graph}. A $(0,d)$-map graph is called a (plane) \emph{$d$-map graph}; such graphs have been extensively studied \citep{FLS-SODA12,Chen-JGT07,DFHT05,CGP02,Chen01}. It is easily seen that $(g,3)$-map graphs are precisely the graphs of Euler genus at most $g$ \citep{CGP02,DEW17}. So $(g,d)$-map graphs provide a natural generalisation of graphs embedded in a surface. Note that if a vertex of $G_0$ is incident with $d$ nations, then $G$ contains $K_d$, which need not be bounded by a function of $g$. \citet{DEW17} determined an optimal bound on the layered treewidth and treewidth of such graphs. 

\begin{thm}[\citep{DEW17}] 
\label{MapGraphs}
Every $(g,d)$-map graph on $n$ vertices has layered treewidth at most $(2g+3)(2d+1)$ and treewidth at most $2\sqrt{(2g+3)(2d+1)n}$. Moreover, for all $g\geq 0$ and $d\geq 8$, for infinitely many integers $n$, there is an  $n$-vertex $(g,d)$-map graph with treewidth at least $\Omega(\sqrt{(g+1)dn})$ and layered treewidth at least $\Omega((g+1)d)$.
\end{thm}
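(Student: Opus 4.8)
The plan is to prove the two upper bounds via the radial graph of $G_0$ together with \cref{DMW}, to deduce the treewidth bound from the layered treewidth bound by the standard square‑root conversion, and to obtain the lower bounds from an explicit construction built around cliques of nations.

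\textbf{Layered treewidth upper bound.} Let $G$ be a $(g,d)$-map graph coming from a graph $G_0$ embedded in a surface $\Sigma$ of Euler genus $g$, with nations and lakes as in the definition; assume $G_0$ connected. I would form the \emph{radial graph} $R$: its vertices are $V(G_0)\cup F(G_0)$, and a vertex $u$ of $G_0$ is adjacent in $R$ to a face $f$ of $G_0$ exactly when $u$ lies on the boundary of $f$. Then $R$ embeds in $\Sigma$, so by \cref{DMW} it has a tree decomposition $(B_x:x\in V(T))$ and a layering $(L_0,\dots,L_t)$ with $|B_x\cap L_i|\le 2g+3$ for all $x,i$; since $R$ is bipartite between $V(G_0)$ and $F(G_0)$ and the proof of \cref{DMW} uses a breadth‑first layering, I may assume this layering is breadth‑first from a vertex of $G_0$, so that $V(G_0)$ lies in the even layers and $F(G_0)$ in the odd layers. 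Define a layering of $G$ by $V_i:=\{f:f\text{ a nation},\ f\in L_{2i+1}\}$; if nations $f,f'$ share a vertex $u$ of $G_0$ then $f-u-f'$ is a path of $R$, so $|L(f)-L(f')|\le 2$ with both values odd, and hence adjacent nations lie in layers differing by at most $1$. For each node $x$ set
\[
T'_x:=\{f\in V(G):f\in B_x\}\cup\{f\in V(G): u\in B_x\text{ for some vertex }u\text{ of }G_0\text{ on the boundary of }f\}.
\]
Every edge $ff'$ of $G$ lies in a common bag (take a bag of $R$ containing the shared vertex), and for a nation $f$ the node set $\{x:f\in T'_x\}$ is the union of the subtree of $T$ indexing the bags containing $f$ with the subtrees indexing the bags containing each boundary vertex of $f$; each of the latter subtrees meets the former (the corresponding edge of $R$ lies in a common bag), so the union is a subtree. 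Thus $(T'_x)$ is a tree decomposition of $G$. For its layered width, fix $x$ and $i$: nations $f\in T'_x\cap V_i$ with $f\in B_x$ number at most $|B_x\cap L_{2i+1}|\le 2g+3$, while each of the at most $|B_x\cap L_{2i}|+|B_x\cap L_{2i+2}|\le 2(2g+3)$ vertices of $G_0$ in $B_x$ that are adjacent in $R$ to a nation of $L_{2i+1}$ lies on the boundary of at most $d$ nations. Hence $|T'_x\cap V_i|\le (2g+3)+2(2g+3)d=(2g+3)(2d+1)$, which is the claimed bound.

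\textbf{Treewidth upper bound.} Apply to $G$ the standard fact that an $n$-vertex graph of layered treewidth $\ell$ has treewidth less than $2\sqrt{\ell n}$: slice the layering into blocks of $\lceil\sqrt{n/\ell}\,\rceil$ consecutive layers, note the given layered tree decomposition restricted to a block has width less than $\ell\sqrt{n/\ell}$, and combine these overlapping pieces along a path; optimising the block length gives the bound. With $\ell\le(2g+3)(2d+1)$ this yields treewidth at most $2\sqrt{(2g+3)(2d+1)n}$.

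\textbf{Lower bounds.} The key mechanism is that $d$ nations placed around a common vertex of $G_0$ form a clique $K_d$ in the map graph. Starting from an $m$-vertex graph $H$ of Euler genus $O(g)$ and treewidth $\Omega(\sqrt{(g+1)m})$ — for $g=0$ a grid, and in general a bounded‑degree graph on $\Theta(g)$ handles witnessing tightness of the bounded‑genus separator theorem — I would replace each vertex of $H$ by a cluster of $\Theta(d)$ nations meeting at a common point of $G_0$ and route, within the incidence budget $d$, enough shared vertices between clusters of adjacent vertices of $H$ to realise a $\Theta(d)$-fold blow‑up of $H$ as a $(g',d)$-map graph with $g'=O(g)$; this is where $d\ge 8$ is used, to leave room for the routing. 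The resulting graph $G$ has $n=\Theta(dm)$ vertices, contains a blow‑up of $H$, and hence has treewidth $\Omega\!\big(d\sqrt{(g+1)m}\big)=\Omega\!\big(\sqrt{(g+1)dn}\big)$. Moreover $G$ has diameter $O(\sqrt{m})=O\!\big(\sqrt{n/((g+1)d)}\big)$, so combining the inequality ``layered treewidth $\ell$ and diameter $D$ imply treewidth at most $\ell(2D+1)$'' with the treewidth lower bound gives layered treewidth $\Omega((g+1)d)$.

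\textbf{Main obstacle.} The radial‑graph argument and the square‑root conversion are routine. The substantive part is the lower‑bound construction: simultaneously (i) keeping the Euler genus $O(g)$ while realising a $\Theta(d)$-fold clique blow‑up as a map graph, (ii) respecting the incidence bound $d$ — the source of the hypothesis $d\ge 8$ — and (iii) controlling the diameter so that the treewidth lower bound transfers to a layered treewidth lower bound. Getting the product $\sqrt{(g+1)dn}$ (rather than a sum) is exactly what forces the genus‑$g$ input graph $H$ to already have treewidth $\Omega(\sqrt{(g+1)m})$, and verifying all three requirements together is the delicate step.
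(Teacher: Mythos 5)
Since the paper only cites this theorem from \citet{DEW17} and contains no proof of it, I am judging your proposal on its own merits against the argument in that source. Your upper-bound half is correct and is essentially the standard route: pass to the vertex--face incidence (radial) graph of $G_0$, which embeds in the same surface, apply \cref{DMW} with a BFS layering rooted at a vertex of $G_0$ (legitimate, though note you are using the proof of \cref{DMW}, not just its statement, to insist on a BFS layering; bipartiteness then puts the nations on the odd layers), and transfer bags and layers to the map graph by adding, for each $G_0$-vertex in a bag, its at most $d$ incident nations. The subtree and edge-coverage checks are sound, and the count $(2g+3)+2d(2g+3)=(2g+3)(2d+1)$ is exactly right. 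For the treewidth bound you should simply invoke \cref{Norine}: your one-line sketch (restricting to blocks of consecutive layers and ``combining overlapping pieces along a path'') does not by itself yield a tree decomposition --- the standard argument deletes every $t$-th group of layers and adds the deleted vertices to all bags --- but since the needed fact is precisely \cref{Norine}, this is cosmetic.

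The genuine gap is the ``Moreover'' half, which in your write-up is a plan rather than a proof, and the three requirements you yourself flag are exactly what is missing. First, for the treewidth of the blown-up graph to be $\Omega(d\,\tw(H))$ you need the adjacencies between clusters of adjacent $H$-vertices to be rich enough (in effect a lexicographic-product-style complete join, or a separator/bramble argument tailored to the construction); you have not verified that such joins can be realised by faces sharing vertices while keeping every vertex of $G_0$ incident to at most $d$ nations and keeping the Euler genus $O(g)$ --- this feasibility question is the entire content of the explicit construction in \citet{DEW17}, where the lower bound is in fact obtained via a separator argument for a concrete family. Second, you need a concrete $m$-vertex graph $H$ of Euler genus $O(g)$ with treewidth $\Omega(\sqrt{(g+1)m})$ \emph{and} small diameter, and your bookkeeping slips here: with $n=\Theta(dm)$, a diameter of $O(\sqrt m)$ is $O(\sqrt{n/d})$, not $O(\sqrt{n/((g+1)d)})$, so the local-treewidth transfer as written only gives layered treewidth $\Omega(d\sqrt{g+1})$; to reach $\Omega((g+1)d)$ you need $H$ (and the map graph built from it) to have diameter $O(\sqrt{m/(g+1)})$, which again must be exhibited, not asserted. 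Third, the hypothesis $d\geq 8$ is guessed (``room for routing'') rather than derived from any construction. Until an explicit family is given and these points are checked, the lower bounds in the statement are not established by your argument.
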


\cref{MinorClosedLayered} leads to further results. A tree decomposition is \emph{domino} if every vertex is in at most two bags \citep{BodEng-JAlg97,Bodlaender-DMTCS99,Wood09}. 

\begin{lem}
\label{BagsContainingVertex}
Every graph $G$ with layered treewidth $k$ has a domino path decomposition $P$ and a tree decomposition $T$ such that for every vertex $v$ of $G$, if $G_v$ is the subgraph of $G$ induced by the union of the bags of $P$ that contain $v$, then $T$ restricted to $G_v$ has width at most $3k-1$. 
\end{lem}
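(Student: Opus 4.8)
The plan is to read off both decompositions directly from the data witnessing layered treewidth $k$: a layering $(V_0,V_1,\dots,V_t)$ of $G$ together with a tree decomposition $T=(T_x:x\in V(T))$ of $G$ such that $|T_x\cap V_i|\leq k$ for every $x\in V(T)$ and every $i$. The tree decomposition $T$ in the statement will just be this layered tree decomposition, and the path decomposition $P$ will be the ``consecutive pairs of layers'' decomposition $P=(V_0\cup V_1,\,V_1\cup V_2,\,\dots,\,V_{t-1}\cup V_t)$ (taking $P=(V_0)$ when $t=0$).

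First I would check that $P$ is a domino path decomposition. For an edge $vw$ with $v\in V_i$ and $w\in V_j$ we have $|i-j|\leq 1$ by the definition of a layering, so $v$ and $w$ lie together in the bag $V_{\min(i,j)}\cup V_{\min(i,j)+1}$. For a vertex $v\in V_i$, the bags of $P$ containing $v$ are exactly $V_{i-1}\cup V_i$ (present when $i\geq 1$) and $V_i\cup V_{i+1}$ (present when $i\leq t-1$); this is a nonempty contiguous subsequence, and it shows $v$ lies in at most two bags, so $P$ is domino. Next, for $v\in V_i$ the union of the bags of $P$ containing $v$ is contained in $V_{i-1}\cup V_i\cup V_{i+1}$ (interpreting $V_{-1}=V_{t+1}=\emptyset$), so $G_v$ is an induced subgraph of $G[V_{i-1}\cup V_i\cup V_{i+1}]$ — three consecutive layers.

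Finally I would bound the width of $T$ restricted to $G_v$, whose bags are $T_x\cap V(G_v)$ for $x\in V(T)$. Since $V(G_v)\subseteq V_{i-1}\cup V_i\cup V_{i+1}$ and each $T_x$ meets each layer in at most $k$ vertices,
\[
|T_x\cap V(G_v)|\leq |T_x\cap V_{i-1}|+|T_x\cap V_i|+|T_x\cap V_{i+1}|\leq 3k,
\]
so the restricted decomposition has width at most $3k-1$, as required. I do not expect a genuine obstacle here: the only care needed is in handling the boundary cases at the two ends of the layering and in matching the bookkeeping to the precise meaning of ``$T$ restricted to $G_v$''. The conceptual content is simply that restricting the (possibly very wide) layered tree decomposition to any three consecutive layers produces bounded width, and that $P$ is engineered precisely so that each set $G_v$ spans only three consecutive layers.
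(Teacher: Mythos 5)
Your proof is correct and is essentially the paper's own argument: take $T$ to be the layered tree decomposition and $P$ the consecutive-pairs-of-layers path decomposition, note each vertex lies in at most two bags of $P$, and observe that $G_v$ sits in three consecutive layers, so each bag of $T$ meets it in at most $3k$ vertices. Your handling of the boundary layers is slightly more careful than the paper's (which pads with an empty layer), but the construction and bound are the same.
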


\preproof\begin{proof}
Let $T$ be a tree decomposition of $G$ with layered width $k$ with respect to some layering $V_1,\dots,V_t$ of $G$, where $V_t=\emptyset$. Then $P:=(V_1\cup V_2,V_2\cup V_3,\dots,V_{t-1}\cup V_t)$ is a path decomposition of $G$. Consider a vertex $v\in V_i$ for some $i\in[t-1]$. Then $v$ is in exactly two bags, $V_{i-1}\cup V_i$ and $V_i\cup V_{i+1}$. Thus $P$ is domino. The union of the bags that contain $v$ is  $V_{i-1}\cup V_i\cup V_{i+1}$, which contains at most $3k$ vertices in each bag of $T$.
\end{proof}

\cref{MinorClosedLayered,BagsContainingVertex} imply:

\begin{thm}
For every fixed apex graph $H$, there is a constant $k$, such that every $H$-minor-free graph $G$ has a domino path decomposition $P$ and a tree decomposition $T$ such that for every vertex $v$ of $G$, if $G_v$ is the subgraph of $G$ induced by the union of the bags of $P$ that contain $v$, then $T$ restricted to $G_v$ has width at most $3k-1$. 
\end{thm}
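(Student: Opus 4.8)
The plan is to obtain the theorem as an immediate combination of \cref{MinorClosedLayered} and \cref{BagsContainingVertex}. First I would observe that the class $\mathcal{G}$ of all $H$-minor-free graphs is minor-closed, and that, since $H$ is apex, $\mathcal{G}$ excludes an apex graph as a minor (namely $H$ itself). By the implication (d)$\Rightarrow$(a) in \cref{MinorClosedLayered}, $\mathcal{G}$ therefore has bounded layered treewidth: there is a constant $k=k(H)$, depending only on $H$, such that every $H$-minor-free graph $G$ has layered treewidth at most $k$.

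Next I would feed each such $G$ into \cref{BagsContainingVertex}. Since $G$ has layered treewidth at most $k$, that lemma produces a domino path decomposition $P$ of $G$ and a tree decomposition $T$ of $G$ such that, for every vertex $v$ of $G$, the subgraph $G_v$ of $G$ induced by the union of the bags of $P$ that contain $v$ satisfies: $T$ restricted to $G_v$ has width at most $3k-1$. Because $k$ depends only on $H$ and not on the particular graph $G$, this is exactly the claimed statement, with the constant $3k-1$ inherited verbatim from \cref{BagsContainingVertex}.

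The hard part here is essentially nonexistent: the only thing worth checking is that the bound implicit in ``bounded layered treewidth'' is uniform across $\mathcal{G}$, so that a single $k$ works for every $H$-minor-free graph simultaneously, which holds by the very definition of bounded layered treewidth. All the real content is imported through \cref{MinorClosedLayered}, whose proof in turn rests on the Robertson--Seymour structure theorem for apex-minor-free graphs; nothing further is needed for the present theorem beyond this citation.
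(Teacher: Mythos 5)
Your argument is exactly the paper's: since $H$ is apex, \cref{MinorClosedLayered} (d)$\Rightarrow$(a) gives a uniform layered-treewidth bound $k$ for $H$-minor-free graphs, and \cref{BagsContainingVertex} then yields the decompositions with width bound $3k-1$. This matches the paper's derivation, which states the theorem as an immediate consequence of those two results.
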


This result is best possible in the following sense. Let $G$ be obtained from the $n\times n$ grid graph by adding one dominant vertex $v$. Say $T_1$ and $T_2$ are tree decompositions of $G$. The bags of $T_1$ that contain $v$ induce a subgraph that contains the $n\times n$ grid, and therefore has treewidth at least $n$, which is unbounded. 

\section{Extremal Questions and Treewidth Bounds}
\label{Extremal}
\label{sqrtn}

We start this section by considering the natural extremal question: what is the maximum number of edges in an $n$-vertex graph that has two orthogonal tree decompositions of a particular type? \citet{DMW17} proved that every $n$-vertex graph with layered treewidth $k$ has minimum degree at most $3k-1$ and thus has at most $(3k-1)n$ edges, which is tight up to a lower order term. More general structures allow for quadratically many edges. For example, $K_{n,n}$ has two 2-orthogonal path decompositions, as shown in \cref{Introduction}. Note that each of these decompositions has quadratic magnitude. We now show that a limit on the magnitude of one decomposition leads to a linear bound on the number of edges, even for tree decompositions. 

%
%


\begin{lem}
\label{LinearSize}
Let $S$ and $T$ be $k$-orthogonal tree decompositions of a graph $G$, where $S$ 
has magnitude $s$. Then $|E(G)|\leq (k-1)s$. In particular, if $s \leq c|V(G)|$ then $|E(G)|\leq c (k-1) |V(G)|$. 
\end{lem}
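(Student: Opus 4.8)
The strategy is to charge each edge of $G$ to an appearance of one of its endpoints in a bag of $S$, and then count these charges. First I would fix a root of the tree $S$, which orients $S$ and lets me talk about the "topmost" bag containing a given vertex. For each vertex $v$ of $G$, let $x(v)$ denote the node of $S$ closest to the root among those $x$ with $v \in S_x$; this is well-defined because the bags containing $v$ form a subtree. The key structural fact about tree decompositions I will use is the standard one: for every edge $vw$ of $G$, either $x(v)$ is an ancestor of $x(w)$ (in which case $w \in S_{x(w)}$ and also, crucially, $v \in S_{x(w)}$ since the subtree for $v$ meets the bag containing the edge and is "above", so it passes through $x(w)$), or symmetrically with $v$ and $w$ swapped. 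In either case, both endpoints of $vw$ lie together in the bag $S_z$ where $z$ is the lower of $x(v), x(w)$.

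With this in hand, I would assign to each edge $vw$ the pair $(z, u)$ where $z$ is the lower of $x(v), x(w)$ and $u$ is the corresponding vertex (the one with $x(u) = z$, i.e.\ the "lower" endpoint). So each edge is charged to a vertex-in-a-bag incidence $(u, z)$ of $S$ with the property that $z = x(u)$. Now I count: fix such an incidence $(u, z)$. All edges charged to it are edges from $u$ to some vertex $w$ with both $u, w \in S_z$ and $w$ "above" $u$. How many such $w$ can there be? Every such $w$ lies in $S_z$, and $S_z$ is one particular bag of $S$; but I want a bound using the $k$-orthogonality, not $|S_z|$. Here is where $T$ enters: pick any bag $T_y$ of the second decomposition that contains the edge $uw$ — wait, different $w$'s force different $T_y$'s, so instead: since $uw \in E(G)$, some bag $T_{y}$ contains both $u$ and $w$, and then $\{u,w\} \subseteq S_z \cap T_y$, which has size at most $k$ by $k$-orthogonality, so $w \in S_z \cap T_y \setminus \{u\}$ — but this only bounds the number of $w$ sharing a fixed $T_y$ with $u$. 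The cleaner route: the neighbours $w$ of $u$ that are charged to $(u,z)$ all lie in $S_z$; consider the subgraph induced on $S_z$ and within it the bag-neighbours of $u$. I claim $u$ together with these $w$'s all lie in a common bag of $T$ is false in general, so instead I bound directly: each such $w$ lies in $S_z$, and $u \in S_z$; for each such $w$ there is a bag $T_{y(w)}$ with $u,w \in T_{y(w)}$, so $w \in S_z \cap T_{y(w)}$; since $|S_z \cap T_{y(w)}| \le k$ and $u$ is in it, at most $k-1$ vertices of $S_z$ lie in $T_{y(w)}$ besides $u$. This still does not immediately cap the count unless many $w$ share a $T_y$.

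So I would switch to the symmetric charging that avoids this: charge edge $vw$ not just to $(u,z)$ but observe that the $\le k-1$ bound wants to be applied per bag of $S$. The clean count is: $|E(G)| \le \sum_{z \in V(S)} (\text{number of edges } vw \text{ with both endpoints in } S_z \text{ and charged at } z)$; for a fixed $z$, the edges charged at $z$ all have their "lower" endpoint $u$ with $x(u) = z$, and for each fixed $w \in S_z$, the number of such $u$ is at most... — here instead I bound the other way: for fixed $z$ and fixed lower-endpoint-candidate $u$, the neighbours $w$ counted all lie in $S_z$, and I use that any two of them, say $w_1 \ne w_2$, need not share a $T$-bag, so the genuinely correct statement is that the edges charged at $z$ form a graph on $S_z$, and I bound its edge count by $(k-1)|S_z|$ via: orient each such edge toward its lower $S$-endpoint, giving out-degree $\le k-1$ for each vertex of $S_z$, because the in-... no — I should orient toward $u$ and bound the *in*-degree of each vertex $w \in S_z$, or out-degree of each $u$. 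The fix that actually works: for each $z$, consider the auxiliary graph $H_z$ on vertex set $S_z$ whose edges are those of $G$ charged at $z$; then $\sum_z |E(H_z)| = |E(G)|$ since every edge is charged exactly once. Now I bound $|E(H_z)|$: since $S$ and $T$ are $k$-orthogonal, $|S_z \cap T_y| \le k$ for all $y$; restricting $T$ to the vertex set $S_z$ gives a tree decomposition of $H_z$ of width at most $k-1$, hence $H_z$ has at most $(k-1)|S_z|$ edges (a graph with a width-$(k-1)$ tree decomposition on $m$ vertices has at most $(k-1)m - \binom{k}{2} \le (k-1)m$ edges). Wait, that over-counts: I only need $|E(H_z)| \le (k-1)|S_z|$, which holds, but summing gives $|E(G)| \le (k-1)\sum_z |S_z| = (k-1)s$ only if every edge is charged at exactly one $z$, which the rooting guarantees. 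So the plan is: (1) root $S$; (2) charge each edge to the bag $S_z$ indexed by the $S$-highest node that still contains both endpoints, using the standard tree-decomposition fact that such a bag exists; (3) for each $z$, note the restriction of $T$ to $S_z$ is a tree decomposition of $H_z$ of width $\le k-1$ by $k$-orthogonality, so $|E(H_z)| \le (k-1)|S_z|$; (4) sum over $z$ to get $|E(G)| \le (k-1)\sum_z|S_z| = (k-1)s$; the second sentence follows by substituting $s \le c|V(G)|$.

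The main obstacle — and the only genuinely non-routine step — is step (2): making precise that each edge can be assigned to a *unique* bag $S_z$ containing both its endpoints, so that $\sum_z |E(H_z)| = |E(G)|$ exactly rather than with multiplicity. This is where rooting $S$ is essential: among all nodes $x$ with $v, w \in S_x$ (a nonempty connected subtree, being the intersection of the subtrees for $v$ and for $w$), take the one nearest the root; uniqueness is then immediate. Everything else — the width bound on the restriction $T|_{S_z}$, which is just the observation that intersecting every bag of $T$ with $S_z$ yields sets of size $\le k$ that still form a valid tree decomposition of $G[S_z] \supseteq H_z$, and the elementary fact that bounded-treewidth graphs are sparse — is standard.
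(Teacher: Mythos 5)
Your proof is correct and follows essentially the same route as the paper: every edge lies in some bag $S_z$, the restriction of $T$ to $S_z$ gives a width-$(k-1)$ tree decomposition of $G[S_z]$ by $k$-orthogonality, hence $|E(G[S_z])|\leq (k-1)|S_z|$, and summing over bags yields $(k-1)s$. The only difference is that your rooting/unique-charging machinery is unnecessary: the paper simply uses the overcounting inequality $|E(G)|\leq \sum_{z}|E(G[S_z])|$, which already suffices for the upper bound.
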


\preproof\begin{proof}
Each edge of $G$ is in $G[S_x]$ for some $x\in V(S)$. Since $T$ restricted to $G[S_x]$ has treewidth at most $k-1$, it follows that $G[S_x]$ has less than $(k-1)|S_x|$ edges. Thus 
\begin{equation*}
|E(G)| \leq \sum_x|E(G[S_x])| \leq \sum_x (k-1)|S_x| = (k-1)s.\qedhere
\end{equation*}
\end{proof}

One application of layered treewidth is that it leads to $O(\sqrt{n})$ treewidth bounds.

\begin{thm}[Norine; see~\citep{DMW17}]
\label{Norine} For every $n$-vertex graph $G$ with layered treewidth $k$,
$$\tw(G) \leq 2\sqrt{kn}-1.$$
\end{thm}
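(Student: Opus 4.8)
The plan is to convert the layered tree decomposition directly into a genuine tree decomposition of small width, using an evenly spaced, sparse family of layers as a global cutset. Fix a tree decomposition $T$ of $G$ together with a layering $(V_0,\dots,V_t)$ of $G$ witnessing layered width $k$. Put $\ell:=\lceil\sqrt{n/k}\,\rceil$, and for each $r\in\{0,1,\dots,\ell-1\}$ let $L_r:=\bigcup\{V_i : i\equiv r\pmod{\ell}\}$. The sets $L_0,\dots,L_{\ell-1}$ partition $V(G)$, so there is an index $r$ with $|L_r|\leq n/\ell$; fix it.

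The first substantive step is to understand $G-L_r$. Since every edge of $G$ joins two consecutive layers and $L_r$ contains every $\ell$-th layer, the layers avoiding $L_r$ fall into blocks of at most $\ell-1$ consecutive layers, any two such blocks being separated by an index in $L_r$; let $G_1,\dots,G_m$ be the subgraphs of $G$ induced by these blocks. Then there is no edge of $G$ between distinct $G_j$. Restricting $T$ to each $G_j$ (intersecting every bag with $V(G_j)$) yields a tree decomposition $T^{(j)}$ of $G_j$ in which each bag meets at most $\ell-1$ layers in at most $k$ vertices apiece, hence has at most $k(\ell-1)$ vertices.

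Next I would build a tree decomposition of $G$ as follows: take the disjoint union of the trees underlying $T^{(1)},\dots,T^{(m)}$, add one new node $z$ with bag $L_r$, join $z$ to an arbitrary node of each $T^{(j)}$, and add all of $L_r$ to every bag. Verifying the axioms is routine: an edge inside some $G_j$ is already covered in $T^{(j)}$; an edge with one endpoint in $L_r$ and the other in some $G_j$ is covered by an augmented bag of $T^{(j)}$; an edge within $L_r$ is covered by the bag of $z$; and there is no edge between distinct $G_j$. Each vertex of $L_r$ lies in every bag and so occupies a connected subtree, while each vertex of $G_j$ lies only in bags of the copy of $T^{(j)}$, where it already occupied a connected subtree.

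It remains to bound the width. Every bag has at most $k(\ell-1)+|L_r|\leq k(\ell-1)+n/\ell$ vertices, so
\[
\tw(G)\leq k(\ell-1)+\frac{n}{\ell}-1 = \Bigl(k\ell+\frac{n}{\ell}\Bigr)-k-1.
\]
Since $\ell<\sqrt{n/k}+1$ we get $k\ell<\sqrt{kn}+k$, and since $\ell\geq\sqrt{n/k}$ we get $n/\ell\leq\sqrt{kn}$; adding these gives $k\ell+n/\ell<2\sqrt{kn}+k$, whence $\tw(G)<2\sqrt{kn}-1$, which implies the claimed inequality. There is no deep obstacle here; the points needing care are the claim that the blocks avoiding $L_r$ each span at most $\ell-1$ consecutive layers (this is exactly where the layering hypothesis is used, and it is why restricting $T$ produces bags of size $k(\ell-1)$ rather than $k\ell$) and the balancing of the contributions $k(\ell-1)$ and $n/\ell$ via the choice $\ell=\lceil\sqrt{n/k}\,\rceil$. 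Degenerate cases — the empty graph, or the case $\ell=1$ (when $n\leq k$), where the construction simply returns the single bag $V(G)$ — are handled by the same argument.
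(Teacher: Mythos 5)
Your proof is correct, and it is essentially the same argument the paper uses (in its proof of the more general \cref{TreewidthUpperBound}, which specializes to this statement by viewing the layering as a weak path decomposition of magnitude $n$): group the layers cyclically into $\ell$ classes, delete the smallest class, note that the remaining components span at most $\ell-1$ consecutive layers so the restricted decomposition has bags of size at most $k(\ell-1)$, and add the deleted vertices to every bag. The only difference is cosmetic bookkeeping in the final balancing of $k(\ell-1)$ against $n/\ell$.
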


As an example,  \cref{DMW,Norine} imply that graphs with bounded Euler genus have treewidth $O(\sqrt{n})$. \citet{DEW17} observed that a standard trick applied with \cref{Norine} implies:

\begin{thm}[\citep{DEW17}]
\label{Pathwidth} For every $n$-vertex graph $G$ with layered treewidth $k$,
$$\pw(G) \leq 11\sqrt{kn}-1.$$
\end{thm}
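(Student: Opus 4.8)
The plan is to derive the pathwidth bound from the treewidth bound of \cref{Norine} by the ``standard trick'' of cutting along a BFS layering into blocks of bounded depth and concatenating path decompositions of the resulting pieces. First I would fix a layered tree decomposition of $G$ of layered width $k$ with respect to a layering $(V_0,V_1,\dots,V_t)$, and choose a positive integer parameter $p$ to be optimised at the end. Partition the layers into consecutive blocks $B_j := V_{jp}\cup V_{jp+1}\cup\dots\cup V_{(j+1)p-1}$ for $j=0,1,\dots$, and let $G_j := G[B_j]$. Since each $G_j$ spans only $p$ consecutive layers, the restriction of the layered tree decomposition to $G_j$ has width at most $kp-1$, so $\tw(G_j)\le kp-1$; applying \cref{RS} recursively (or \cref{Norine} directly, noting $G_j$ inherits layered width $k$ on $\le p$ layers, hence treewidth $\le kp-1$ as well as $\le 2\sqrt{k\,|V(G_j)|}-1$) we get a tree decomposition, and then a path decomposition, of each $G_j$.

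The key step is to turn each $\tw$ bound into a $\pw$ bound on $G_j$ and then glue the pieces along the layering. A tree decomposition of width $w$ on an $m$-vertex graph yields a path decomposition of width $O(w\log m)$ in general, which is too lossy; instead I would use that a graph of treewidth $w$ on $m$ vertices has pathwidth $O(w\log m)$ — still lossy — so the better route is: apply \cref{Norine} to each $G_j$ to get $\tw(G_j)\le 2\sqrt{k|V(G_j)|}-1$, but actually the cleanest path is to bound $\pw(G_j)$ directly. Here I would invoke the fact (folklore, via balanced separators from \cref{RS} applied recursively with the layering giving linear-diameter pieces, or simply the known inequality $\pw(H)\le (\tw(H)+1)\log_2|V(H)|$) — however, to get the clean constant $11$ the intended argument is surely: take $p=\lceil\sqrt{n/k}\rceil$-ish, bound $\tw(G_j)+1\le kp$ and also $\le 2\sqrt{k|V(G_j)|}$, form a path decomposition of $G_j$ of width at most (roughly) $2\,\tw(G_j)+1$ by a caterpillar argument is false in general — so the real mechanism must be that each $G_j$ has pathwidth $O(\sqrt{k\,|V(G_j)|})$ because $G_j$ itself has layered treewidth $k$ and we can recurse, OR we accept $\pw(G_j) = O(\tw(G_j)\cdot\log)$. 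I will commit to the following concrete scheme: choose $p$ so that $kp \approx \sqrt{kn}$, i.e. $p\approx\sqrt{n/k}$; then $\tw(G_j)\le kp-1 = O(\sqrt{kn})$, and since each $G_j$ has at most $n$ vertices but more importantly has layered treewidth $k$ on $p$ layers, its pathwidth is at most $\pw(G_j)\le$ (width of a path decomposition obtained by ordering bags along the layer index) $\le kp-1+$(overlap from two adjacent blocks) $= O(\sqrt{kn})$. Concatenating the path decompositions of $G_0,G_1,\dots$ in order, and enlarging each bag of the decomposition of $G_j$ by adding $V_{jp-1}$ and $V_{(j+1)p}$ (the boundary layers, of total size $\le 2k$ worth per layer counted once) to handle edges between consecutive blocks, produces a path decomposition of $G$ whose width is at most the maximum piece width plus $O(k)$; optimising $p$ gives width $\le 11\sqrt{kn}-1$ after bookkeeping the constants.

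The main obstacle is the passage from a treewidth bound to a pathwidth bound without losing a logarithmic factor: one must exploit the geometry of the layering (blocks of bounded ``depth'' $p$), not just the abstract value of $\tw(G_j)$. Concretely, the crux is that within a block one can arrange the bags of its tree decomposition so that, augmented by at most $O(k)$ boundary vertices, they line up into a path — and the clean way to see this is to note each block $G_j$ has a tree decomposition of width $kp-1$ in which, after suitable massaging, one obtains a path decomposition of comparable width because the block decomposition ``threads through'' the $p$ layers. Once that is in hand, the remaining work is purely the optimisation: setting $p=\lceil 2\sqrt{n/k}\,\rceil$ (or whatever value makes the two competing terms $kp$ and $n/p$ balance), summing bag sizes across blocks, and verifying the stray constants assemble into the stated $11\sqrt{kn}-1$. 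I expect the constant $11$ to come precisely from (a) a factor near $2$ from \cref{Norine} inside each block, (b) a factor near $2$ from doubling layers in the domino-style block construction, (c) the boundary-layer augmentation contributing another small multiple of $\sqrt{kn}$, and (d) rounding in the choice of $p$; the proof is thus short modulo this careful accounting, with the only conceptual point being the layering-aware treewidth-to-pathwidth step.
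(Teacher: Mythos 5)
There is a genuine gap, and it sits exactly where you yourself flag ``the crux'': the claim that a block $G_j$ spanning $p$ consecutive layers with layered width $k$ has pathwidth roughly $kp$. Nothing in the layered structure supports this. Restricting the tree decomposition to the block only gives $\tw(G_j)\leq kp-1$, and treewidth does not bound pathwidth within a constant factor; the layering itself is useless inside a block because individual layers can contain $\Theta(n)$ vertices, so ``ordering bags along the layer index'' yields either an invalid decomposition or one of unbounded width. Indeed the implication is false as a general statement: the complete binary tree of height $h$ admits a layering with only \emph{two} layers (send tree-level $i$ to layer $i\bmod 2$) and a tree decomposition whose bags are the edges, of layered width $1$ with respect to it, yet its pathwidth is $\lceil h/2\rceil\approx\tfrac12\log_2 n$, which is not $O(kp)$. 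Your fallback options (recursing inside blocks, or accepting $\pw\leq(\tw+1)\log_2 n$) are never actually carried out, and the second cannot give $11\sqrt{kn}-1$. The gluing step's accounting is also wrong: adding the boundary layers $V_{jp-1}$ and $V_{(j+1)p}$ to every bag costs their full sizes, which may be $\Theta(n/p)$ or larger, not ``$O(k)$'' --- only the intersection of a layer with a single bag is at most $k$.

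The argument the theorem actually rests on (the paper does not prove it; it invokes the ``standard trick'' via Lemma~6.1 of the cited work of Dujmovi\'c, Eppstein and Wood) avoids the layering entirely in the pathwidth step. The key observation is that layered treewidth is monotone under taking subgraphs, so by \cref{Norine} every subgraph on $m$ vertices has treewidth at most $2\sqrt{km}-1$ and hence, by \cref{RS}, a balanced separator of size at most $2\sqrt{km}$. One then recurses: remove such a separator, build path decompositions of the components (each on at most $m/2$ vertices), concatenate them in any order, and add the separator to every bag. The widths contributed along the recursion decay geometrically, roughly $2\sqrt{kn}\sum_{i\geq 0}2^{-i/2}=O(\sqrt{kn})$, with no logarithmic loss; the constant $11$ comes from this bookkeeping. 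This separator recursion is precisely the ``treewidth-to-pathwidth step'' you were searching for --- except that it needs no layering at all, only the fact that the $O(\sqrt{km})$ treewidth bound is inherited by every subgraph.
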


We now generalise these results to the setting of orthogonal decompositions. A \emph{weak path decomposition} of a graph $G$ is a sequence $P_1,\dots,P_t$ of sets of vertices of $G$ called bags, such that $P_1\cup\dots\cup P_t = V(G)$, for every vertex $v$ of $G$ the set of bags that contain $v$ forms a subsequence, and for every edge $vw$ of $G$, both $v$ and $w$ are in $P_i\cup P_{i+1}$ for some $i\in\{1,\dots,t\}$ (where $P_{t+1}$ means $\emptyset$). Note that  a path decomposition is a weak path decomposition in which the final condition is strengthened to say that both $v$ and $w$ are in $P_i$ for some $i\in\{1,\dots,t\}$. If $P_1,\dots,P_t$ is a weak path decomposition, then $P_1\cup P_2, P_2\cup P_3,\dots,P_{t-1}\cup P_t$ is a path decomposition with at most twice the width of $P_1,\dots,P_t$. In this sense, there is little difference between weak path decompositions and path decompositions. The \emph{magnitude} of a  weak  path decomposition $P_1,\dots,P_t$ is  $\sum_{i\in[n]}|P_i|$. 

Observe that a layering is a weak path decomposition in which each vertex is in exactly one bag. Thus weak path decompositions with linear magnitude generalise the notion of a layering. In a weak path decomposition, each bag $P_i$ separates $P_1\cup\dots\cup P_{i-1}$ and $P_{i+1}\cup\dots\cup P_t$; that is, there is no edge between  $P_1\cup\dots\cup P_{i-1}$ and $P_{i+1}\cup\dots\cup P_t$. This property is the key to the next lemma, which generalises \cref{Norine} to the setting of weak path decompositions. A tree decomposition $T$ and weak path decomposition $P_1,\dots,P_t$ of a graph $G$ are \emph{$c$-orthogonal} if $|T_x\cap P_i|\leq c$ for all $x\in V(T)$ and $i\in [t]$. 

\begin{lem}
\label{TreewidthUpperBound}
Suppose that $T$ is a tree decomposition and $P$ is a weak path decomposition of a graph $G$, where
$T$ and $P$ are $k$-orthogonal and $P$ has magnitude $s$. Then $$\tw(G)\leq 2 \sqrt{ ks}-1.$$
\end{lem}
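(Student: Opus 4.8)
The plan is to imitate the proof of \cref{Norine}, using the weak path decomposition $P=(P_1,\dots,P_t)$ in place of a layering. The only feature of a layering that proof really exploits is that deleting one layer separates everything before it from everything after it, and the analogue holds here: since the bags containing any fixed vertex form a contiguous interval and every edge has both ends in some $P_i\cup P_{i+1}$, deleting $P_i$ leaves no edge between the vertices all of whose bags lie among $P_1,\dots,P_{i-1}$ and those all of whose bags lie among $P_{i+1},\dots,P_t$.

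Fix an integer $\ell\geq 1$, to be optimised at the end. For $r\in\{0,1,\dots,\ell-1\}$ let $S_r:=\bigcup\{P_i : i\equiv r\pmod\ell\}$. Since $\sum_{r=0}^{\ell-1}|S_r|\leq\sum_i|P_i|=s$, some $S_r$ satisfies $|S_r|\leq s/\ell$; fix such an $r$. Every vertex outside $S_r$ has all of its bags inside a single maximal block of consecutive indices containing no index $\equiv r\pmod\ell$, and each such block has length at most $\ell-1$. Moreover the separation property above forces both endpoints of any edge of $G-S_r$ into the same block (a short case check on whether $m$ or $m{+}1$ is $\equiv r$, where $vw\subseteq P_m\cup P_{m+1}$). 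Hence each component $C$ of $G-S_r$ satisfies $V(C)\subseteq\bigcup_{i\in J}P_i$ for some block $J$ with $|J|\leq\ell-1$.

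Now invoke $k$-orthogonality: for every node $x$ of $T$ and every such component $C$, $|T_x\cap V(C)|\leq\sum_{i\in J}|T_x\cap P_i|\leq k(\ell-1)$. So restricting $T$ to $V(C)$ gives a tree decomposition of $C$ (which equals $G[V(C)]$) of width at most $k(\ell-1)-1$, and therefore $\tw(G-S_r)\leq k(\ell-1)-1$. Adding the vertices of $S_r$ to every bag of a width-optimal tree decomposition of $G-S_r$ yields a tree decomposition of $G$, so $\tw(G)\leq|S_r|+k(\ell-1)-1\leq s/\ell+k(\ell-1)-1$. Finally take $\ell:=\lceil\sqrt{s/k}\,\rceil$, which makes both $s/\ell$ and $k(\ell-1)$ at most $\sqrt{ks}$, giving $\tw(G)\leq 2\sqrt{ks}-1$. (The degenerate cases $k=0$ and $V(G)=\emptyset$ are vacuous.)

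I do not expect a serious obstacle. The one point that needs genuine care is the claim that each component of $G-S_r$ stays within a single block of at most $\ell-1$ consecutive bags; this rests on the fact that the bags containing a vertex form a contiguous interval, and on the no-edge-across property of weak path decompositions. Everything else is the choice of $\ell$ and the routine inequality $\tw(G)\leq\tw(G-X)+|X|$.
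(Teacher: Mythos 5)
Your proof is correct and is essentially the paper's own argument: the paper likewise labels the bags cyclically with $t=\lceil\sqrt{s/k}\rceil$ labels, deletes the label class of total size at most $s/t$, notes that each component of the remainder lies in at most $t-1$ consecutive bags (so $T$ restricted to it has bags of size at most $(t-1)k$ by orthogonality), and adds the deleted vertices to every bag to get width at most $(t-1)k+s/t\leq 2\sqrt{ks}$. Your write-up just spells out the block/separation verification a bit more explicitly than the paper does; there is no substantive difference.
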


\preproof\begin{proof}
Let $t:=\ceil{\sqrt{s/k}}$. Label the bags of $P$ in order by $1,\dots,t, 1,\dots,t,\dots$. 
Since the magnitude of $P$ is $s$, for some $i \in \{1,\dots, t \}$ the bags labelled $i$ have total size at most $s/t$. 
Let $G'$ be the subgraph of $G$ obtained by deleting the bags labelled $i$. 
Since each bag of $P$ separates the bags of $P$ before and after it, 
each connected component of $G'$ is contained within $t-1$ consecutive bags of $P$. 
Thus $G'$ has a tree decomposition with bags of size at most $(t-1)k$. 
Add all the vertices in bags of $P$ labelled $i$ to every bag of this tree decomposition of $G'$. 
We obtain a tree decomposition of $G$ with bag size at most $(t-1)k+s/t \leq 2\sqrt{ks}$.
Thus $\tw(G)\leq 2\sqrt{ks}-1$. 
\end{proof}

Several comments on \cref{TreewidthUpperBound} are in order.

First we show that \cref{TreewidthUpperBound} cannot be strengthened for two tree decompositions with bounded intersections. Let $G$ be a bipartite graph with bipartition $(A,B)$ and maximum degree $\Delta$. Let $S$ be the star decomposition of $G$ with root bag $A$ and a leaf bag $N[w]$ for each vertex $w\in B$. Symmetrically, let $T$ be the star decomposition of $G$ with root bag $B$ and a leaf bag $N[v]$ for each vertex $v\in A$.  Observe that $S$ and $T$ are $\Delta$-orthogonal and both have magnitude $|V(G)|+|E(G)|$. Now, apply this construction with $G$ a random cubic bipartite graph on $n$ vertices. We obtain two $3$-orthogonal tree decompositions of $G$ both with magnitude $\frac{5}{2}n$. But it is well known that $G$ has treewidth $\Omega(n)$; see \citep{GM09} for example. Thus  \cref{TreewidthUpperBound} does not hold for two tree decompositions with bounded intersections. 

\medskip
We now show that \cref{TreewidthUpperBound} proves that certain graph classes have bounded expansion. 
A graph class $\mathcal{C}$ has \emph{bounded expansion} if there exists a function $f$ such that for every graph $G\in\mathcal{C}$, for every subgraph $G'$ of $G$, and for all pairwise disjoint balls $B_1,\dots,B_s$ of radius at most $r$ in $G'$, the graph obtained from $G'$ by contracting each $B_i$ into a vertex has average degree at most $f(r)$. If $f(r)$ is a linear or polynomial function, then $\mathcal{C}$ has \emph{linear} or \emph{polynomial  expansion}, respectively. See \citep{Sparsity} for background on graph classes with bounded expansion. \citet{DMW17} proved that graphs with bounded layered treewidth have linear expansion. In particular, in a graph of layered treewidth $k$ contracting disjoint balls of radius $r$ gives a graph of layered treewidth at most $(4r+1)k$, and thus with average degree $O(rk)$. 
This result can be extended as follows. A class $\mathcal{G}$ of graphs is \emph{hereditary} if for every graph $G\in\mathcal{G}$ every induced subgraph of $G$ is in $\mathcal{G}$. \citet{DN16} proved that for a hereditary graph class $\mathcal{G}$, if every graph $G\in\mathcal{G}$ has a separator of order $O(|V(G)|^{1-\epsilon})$ for some fixed $\epsilon>0$, then $\mathcal{G}$ has polynomial expansion. \cref{TreewidthUpperBound,RS} then imply the following.

\begin{prop}
\label{PolyExp}
Let $\mathcal{G}_k$ be the class of graphs $G$ such that every subgraph $G'$ of $G$ has a path decomposition with magnitude at most $k|V(G')|$ and a tree decomposition that are $k$-orthogonal.
Then $\mathcal{G}_k$ has polynomial expansion. 
\end{prop}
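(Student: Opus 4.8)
The plan is to combine the hypothesis with \cref{TreewidthUpperBound}, \cref{RS}, and the \citet{DN16} result quoted just above. Concretely, I would first establish that $\mathcal{G}_k$ is hereditary: this is immediate, since the defining condition quantifies over all subgraphs $G'$ of $G$, and in particular every induced subgraph of a graph in $\mathcal{G}_k$ inherits the property (any subgraph of an induced subgraph is a subgraph of the original). So $\mathcal{G}_k$ is hereditary, and to apply the \citet{DN16} theorem it suffices to show that every graph $G \in \mathcal{G}_k$ has a separator of order $O(|V(G)|^{1-\epsilon})$ for some fixed $\epsilon > 0$.

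Next I would bound the treewidth of an arbitrary $G \in \mathcal{G}_k$ on $n$ vertices. Applying the defining property with $G' = G$, we obtain a path decomposition $P$ of $G$ with magnitude at most $k n$ and a tree decomposition $T$ of $G$ that is $k$-orthogonal to $P$. A path decomposition is in particular a weak path decomposition of the same magnitude, so \cref{TreewidthUpperBound} applies with $s \leq kn$, giving $\tw(G) \leq 2\sqrt{k \cdot kn} - 1 = 2k\sqrt{n} - 1$. Then \cref{RS} yields a separator of $G$ of size at most $\tw(G) + 1 \leq 2k\sqrt{n}$, which is $O(\sqrt{n}) = O(n^{1-1/2})$. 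Taking $\epsilon = \tfrac12$, the hypotheses of the \citet{DN16} theorem are met, so $\mathcal{G}_k$ has polynomial expansion.

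The write-up is essentially a two-line deduction once the pieces are assembled, so there is no serious obstacle; the only point requiring a moment's care is confirming hereditariness and that a path decomposition of magnitude $s$ counts as a weak path decomposition of magnitude $s$ (it does, since both notions define magnitude as $\sum_i |P_i|$ and every path decomposition is a weak path decomposition). A minor subtlety worth flagging is that \citet{DN16} require the separator bound for the class itself, not merely for each graph together with all its subgraphs; but since $\mathcal{G}_k$ is hereditary and closed under taking subgraphs in the relevant sense, every induced subgraph of a member of $\mathcal{G}_k$ is again in $\mathcal{G}_k$ and hence also has an $O(\sqrt{n})$ separator, so the hypothesis is satisfied uniformly.
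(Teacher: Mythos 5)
Your proof is correct and matches the paper's own argument: the paper derives \cref{PolyExp} exactly as you do, by noting that $\mathcal{G}_k$ is hereditary, applying \cref{TreewidthUpperBound} with $s\leq kn$ to get $\tw(G)=O(\sqrt{n})$, then \cref{RS} for an $O(\sqrt{n})$ separator, and finally the result of \citet{DN16}. Your added remarks on hereditariness and on path decompositions being weak path decompositions of the same magnitude are sound and require no changes.
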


%

We now show that \cref{PolyExp} cannot be extended to the setting of two tree decompositions with bounded intersections. Let $G$ be the 1-subdivision of $K_{n,n}$, which has $N= n^2+2n$ vertices. Say the bipartition classes of $K_{n,n}$ are $V:=\{v_1,\dots,v_n\}$ and $W:=\{w_1,\dots,w_n\}$. Let $x_{i,j}$ be the division vertex for edge $v_iw_j$. Let $S$ be the star decomposition of $G$ with root bag $\{v_1,\dots,v_n\}$, and for each $j\in[n]$ have a leaf bag $V\cup\{w_j,x_{1,j},\dots,x_{n,j}\}$. Similarly, let $T$ be the star decomposition of $G$ with root bag $W$, and for each $i\in[n]$ have a leaf bag $W\cup\{v_i,x_{i,1},\dots,x_{i,n}\}$. Then the intersection of a bag from $S$ and a bag from $T$ has size at most $3$. Each of $S$ and $T$ have magnitude $O(N)$. On the other hand, contracting the edges incident to each vertex $v_i$ gives $K_{n,n}$ which has unbounded average degree. Thus the class of 1-subdivisions of balanced complete bipartite graphs does not have bounded expansion, but every graph in the class has two tree decompositions with bounded intersections and linear magnitude. 





\medskip
Finally, we consider bounds on pathwidth. It is well known that hereditary graph classes with treewidth $O(n^\epsilon)$, for some fixed $\epsilon\in(0,1)$, have pathwidth $O(n^\epsilon)$; see \citep{Bodlaender-TCS98,DEW17} for example. In particular, \cref{TreewidthUpperBound} and  Lemma~6.1 of \citet{DEW17} imply the following.

\begin{lem}
\label{PathwidthUpperBound}
Let $\mathcal{G}$ be a hereditary class of graphs, such that every $n$-vertex graph $G$ in $\mathcal{G}$ has a tree decomposition $T$ and a weak path decomposition $P$, such that $T$ and $P$ are $k$-orthogonal
and $P$ has magnitude at most $cn$. Then for every $n$-vertex graph $G$ in $\mathcal{G}$, 
$$\pw(G)\leq 11 \sqrt{ ckn}-1.$$
\end{lem}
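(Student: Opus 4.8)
The plan is to combine \cref{TreewidthUpperBound} with a standard balanced-separator recursion, using the hypothesis that the class is hereditary so that the treewidth bound applies not only to $G$ but to every induced subgraph encountered during the recursion. First I would invoke \cref{TreewidthUpperBound}: every $n$-vertex graph $G\in\mathcal{G}$ has $\tw(G)\leq 2\sqrt{ckn}-1$, and hence by \cref{RS} a separator $S$ of size at most $2\sqrt{ckn}$. Since $\mathcal{G}$ is hereditary, the same bound holds for every induced subgraph, so this separator property is inherited by every subgraph we recurse into. This is exactly the situation covered by the cited ``standard trick'' (Lemma~6.1 of \citet{DEW17}): a hereditary class in which every $m$-vertex member has a separator of size $O(\sqrt{m})$ has pathwidth $O(\sqrt{n})$.

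The key steps, in order: (1) apply \cref{TreewidthUpperBound} to obtain $\tw(G)\leq 2\sqrt{ckn}-1$ for every induced subgraph of an $n$-vertex graph in $\mathcal{G}$; (2) apply \cref{RS} to convert this into a separator of size at most $2\sqrt{ckn}$; (3) recursively build a path decomposition by splitting $G$ along the separator $S$, laying out the (at most $\half n$-vertex) components on the two sides in sequence and adding all of $S$ to every bag; (4) bound the resulting width by summing a geometric-type series over the recursion depth. Concretely, if $f(n)$ denotes the worst-case pathwidth of an $n$-vertex graph in $\mathcal{G}$, the recursion gives roughly $f(n)\leq f(\half n)+2\sqrt{ckn}+1$, which solves to $f(n)\leq \frac{2\sqrt{ckn}}{1-1/\sqrt2}+O(1)$; since $\frac{2}{1-1/\sqrt2}<7<11$, the constant $11$ is comfortably met after absorbing lower-order terms and the $-1$. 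One must be slightly careful that the separator is applied to induced subgraphs (for which \cref{TreewidthUpperBound} still applies, as $\mathcal{G}$ is hereditary and induced subgraphs inherit the orthogonal-decomposition hypothesis), and that stitching together path decompositions of the pieces with $S$ added to each bag yields a valid path decomposition of the whole.

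The main obstacle is purely bookkeeping: verifying that the separator-recursion actually produces a \emph{path} decomposition (not just a tree decomposition) and that the constant works out below $11$. The path-decomposition structure is what forces the pieces to be arranged in a linear order and the separator to sit in \emph{all} bags of both sub-decompositions simultaneously — this is where the factor blow-up relative to the treewidth bound comes from, and it is the reason the pathwidth constant ($11$) is larger than the treewidth constant ($2$). Since Lemma~6.1 of \citet{DEW17} already packages exactly this argument, the cleanest route is to cite it directly: feed in the separator bound $2\sqrt{ckn}$ from Steps (1)--(2) and read off $\pw(G)\leq 11\sqrt{ckn}-1$. I would therefore present the proof as: ``By \cref{TreewidthUpperBound}, every induced subgraph of an $n$-vertex $G\in\mathcal{G}$ has treewidth less than $2\sqrt{ckn}$, hence by \cref{RS} has a separator of size at most $2\sqrt{ckn}$; now apply Lemma~6.1 of \citet{DEW17}.''
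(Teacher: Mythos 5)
Your proposal is correct and matches the paper's own (implicit) argument: the paper likewise obtains the treewidth bound from \cref{TreewidthUpperBound} (valid for all members of the hereditary class) and then invokes the standard separator-recursion packaged as Lemma~6.1 of \citet{DEW17} to convert it into the stated pathwidth bound. Your hand-worked recursion with constant $2/(1-1/\sqrt{2})<7$ is just an unpacking of that cited lemma, so nothing further is needed.
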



\section{Minor-Closed Classes}
\label{MinorClosed}

This section shows that graphs in a fixed minor-closed class have a tree decomposition and a linear-magnitude path decomposition with bounded intersections. The following graph minor structure theorem by Robertson and Seymour is at the heart of  graph minor theory. In a tree decomposition $T$ of a graph $G$, the \emph{torso} of a bag $T_x$ is the subgraph obtained from $G[T_x]$ by adding, for each edge $xy\in E(T)$, all edges $vw$ where $v,w\in T_x\cap T_y$. A graph $G$ is \emph{$(g,p,k,a)$-almost-embeddable} if there is set $A$ of at most $a$ vertices in $G$ such that $G-A$ can be embedded in a surface of Euler genus $g$ with at most $p$ vortices of width at most $k$. (See \citep{DMW17} for the definition of vortex, which will not be used in the present paper.)\  A graph is \emph{$k$-almost-embeddable} if it is  $(k,k,k,k)$-almost-embeddable. If $G_1$ and $G_2$ are disjoint graphs, where $\{v_1,\dots,v_k\}$ and $\{w_1,\dots,w_k\}$ are cliques of equal size respectively in $G_1$ and $G_2$, then a \emph{clique-sum} of $G_1$ and $G_2$ is a graph obtained from $G_1\cup G_2$ by identifying $v_i$ with $w_i$ for each $i\in\{1,\dots,k\}$, and possibly deleting some of the edges $v_iv_j$. 

\begin{thm}[\citep{RS-GraphMinorsXVI-JCTB03}]
\label{GMST}
For every fixed graph $H$ there are constants $g,p,k,a$ such that every $H$-minor-free graph is obtained by clique-sums of $(g,p,k,a)$-almost-embeddable graphs. Alternatively, every $H$-minor-free graph has a tree decomposition in which each torso is $(g,p,k,a)$-almost embeddable. 
\end{thm}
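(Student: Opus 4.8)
This is the Robertson--Seymour Graph Minor Structure Theorem, the central structural output of the Graph Minors series, and there is no short proof of its existence part; so the plan is not to reprove it but to cite \citep{RS-GraphMinorsXVI-JCTB03} for the clique-sum formulation and to verify only that the two formulations in the statement coincide, i.e.\ that a graph is obtained by clique-sums of $(g,p,k,a)$-almost-embeddable graphs if and only if it has a tree decomposition in which every torso is $(g,p,k,a)$-almost-embeddable.

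For the direction from clique-sums to a tree decomposition I would induct on the number $m$ of summands. The base case $m=1$ is a one-bag decomposition. For the step, suppose $G$ is a clique-sum along a common clique $C$ of a graph $G'$ that is itself a clique-sum of $m-1$ almost-embeddable pieces (hence, by induction, has a tree decomposition $T'$ with almost-embeddable torsos) and an almost-embeddable graph $G_m$. Since $C$ is a clique of $G'$ it lies in some bag $T'_x$; I would form $T$ from $T'$ by attaching a new node $y$ adjacent to $x$ with bag $T_y := V(G_m)$. Checking the tree-decomposition axioms is immediate, since the only vertices shared by $V(G_m)$ and $V(G')$ lie in $C \subseteq T'_x \cap T_y$. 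The torso of $T_y$ is $G_m$ with the edges of $C$ restored, which is just $G_m$ because $C$ is already a clique of $G_m$; and the torso of $T_x$ is unchanged, because the clique edges on $C$ it would acquire were already present ($C$ is a clique of $G'$). So every torso of $T$ is almost-embeddable with the same parameters.

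For the converse I would take a tree decomposition $T$ of $G$ with almost-embeddable torsos; for every edge $xy$ of $T$ the set $T_x \cap T_y$ is, by definition of torso, a clique in both torsos. Rooting $T$ and working from the leaves, $G$ is recovered from the torsos by repeatedly taking clique-sums along these intersections and deleting those edges inside the intersections that are not present in $G$; this exhibits $G$ as an iterated clique-sum of $(g,p,k,a)$-almost-embeddable graphs.

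The only point needing care is the bookkeeping around the ``possibly deleting some edges'' clause in the definition of clique-sum together with the edge-addition in the definition of torso; the computation above shows that these cancel and that no parameter grows, which is exactly why the same constants $g,p,k,a$ may be used in both formulations. The real obstacle is of course the existence statement itself — that every $H$-minor-free graph admits such a decomposition — which is the content of \citep{RS-GraphMinorsXVI-JCTB03} and which I would not attempt to reprove here.
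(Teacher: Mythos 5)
Your approach is the right one and matches the paper exactly: the paper states this result purely as a citation to Robertson--Seymour \citep{RS-GraphMinorsXVI-JCTB03} and offers no proof, so citing the existence statement and only verifying the equivalence of the clique-sum and torso formulations is precisely what is called for. Your equivalence argument is correct — in particular the observation that a join clique $C$ lies in a single bag of the inductively obtained decomposition, that $V(G_m)\cap V(G')=C$ bounds the new bag intersection, and that the torso's edge-addition exactly cancels the ``possibly deleting some edges'' clause so the parameters $g,p,k,a$ are unchanged — which is more detail than the paper records for its ``Alternatively'' clause.
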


\citet{DMW17} introduced the following definition to handle clique sums. Say a graph $G$ is \emph{$\ell$-good} if for every clique $K$ of size at most $\ell$ in $G$ there is a tree decomposition of $G$ of layered width at most $\ell$ with respect to some layering of $G$ in which $K$ is the first layer. \citet{DMW17} proved the $\ell= (k+1)(2g+2p+3)$ case of the following result; the proof when $\ell> (k+1)(2g+2p+3)$ is identical. 

\begin{thm}[\citep{DMW17}]
\label{NoApexGood}
For every integer $\ell\geq (k+1)(2g+2p+3)$, every $(g,p,k,0)$-almost-embeddable graph $G$ is $\ell$-good.
\end{thm}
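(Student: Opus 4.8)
The plan is to reduce to DMW's layered-treewidth bound for almost-embeddable graphs and then \emph{re-root} the layering at $K$. Write $\ell_0:=(k+1)(2g+2p+3)$, so $\ell\geq\ell_0$. I will use the following as a black box: the proof of \cref{DMW} in \citep{DMW17}, together with its extension to $(g,p,k,0)$-almost-embeddable graphs, shows that for every connected $(g,p,k,0)$-almost-embeddable graph $H$ and every vertex $r\in V(H)$ there is a tree decomposition of $H$ in which every bag contains at most $\ell_0$ vertices at each distance from $r$; equivalently, the layering $L_i:=\{v\in V(H):\dist_H(r,v)=i\}$ certifies that this tree decomposition has layered width at most $\ell_0$. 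This rootability is automatic from the usual proof, which fixes a spanning BFS tree, and is the only feature of the black box that I use.

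We may assume $G$ is connected and $K\neq\emptyset$: if $K=\emptyset$, prepend an empty layer to any witnessing layering; and if $G$ is disconnected, then $K$ (being a clique) lies in a single component $C$, so run the argument below on $C$, reindex the layering of every other component to start at layer $1$ (leaving layer $0$ empty there), and take the disjoint union of the resulting tree decompositions.

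So let $G$ be connected and let $K$ be a clique with $|K|\leq\ell$. Contract $K$ to a single vertex $v_K$, obtaining $G':=G/K$. The key claim is that $G'$ is again $(g,p,k,0)$-almost-embeddable; granting it, apply the black box to $G'$ with $r:=v_K$ to get a tree decomposition $\mathcal T'$ of $G'$ in which each bag meets each set $L_i:=\{v:\dist_{G'}(v_K,v)=i\}$ in at most $\ell_0$ vertices, where $L_0=\{v_K\}$. Now \emph{uncontract}: in every bag of $\mathcal T'$ that contains $v_K$, replace $v_K$ by all of $K$; call the resulting family $\mathcal T$, indexed by the same tree. Since $K$ is a clique and $\{x:v_K\in B_x\}$ is connected, $\mathcal T$ is a tree decomposition of $G$: every edge inside $K$ and every edge from $K$ to its complement now lies in some bag, and the remaining conditions are inherited from $\mathcal T'$. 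Furthermore $(K,L_1,L_2,\dots)$ is a valid layering of $G$ with first layer $K$: edges inside $K$ stay in the first layer; an edge from $u\in K$ to $w\notin K$ is an edge $v_Kw$ of $G'$, so $w\in L_1$; and every other edge is an edge of $G'$ whose endpoints already respected the layering. With respect to it, each bag of $\mathcal T$ meets $K$ in at most $|K|\leq\ell$ vertices and meets each $L_i$ with $i\geq1$ in exactly as many vertices as the corresponding bag of $\mathcal T'$, hence in at most $\ell_0\leq\ell$. Thus $\mathcal T$ has layered width at most $\ell$ with respect to a layering whose first layer is $K$, so $G$ is $\ell$-good.

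The main obstacle is the key claim that $G/K$ remains $(g,p,k,0)$-almost-embeddable. The idea is that a clique of an almost-embeddable graph is \emph{local}: an interior vertex of a vortex is adjacent only to vertices of its own vortex, and each vortex carries a path decomposition of bounded width, so a clique either lies within a bounded number of consecutive bags of a single vortex or is drawn entirely in the host surface, where it bounds a disk. In either case contracting $K$ is a local modification that leaves the host surface, the number of vortices, and their width unchanged and creates no apex vertex (one may have to re-attach a vortex at $v_K$ in place of a former root of $K$), so all four parameters are preserved. When $\ell=\ell_0$ this is precisely DMW's argument; nothing changes for $\ell>\ell_0$ except that larger cliques are allowed, and these still fit into the first layer because $|K|\leq\ell$.
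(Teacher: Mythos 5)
There is a genuine gap, and it sits exactly where you flag it: the ``key claim'' that $G/K$ is again $(g,p,k,0)$-almost-embeddable is not proved, and it is doubtful precisely in the case the theorem exists for, namely $p\geq 1$. Almost-embeddability with fixed parameters is not a minor-closed property, so ``contract a clique and stay in the class'' needs a real argument. Your sketch breaks down at the vortices: if $K$ contains two society (boundary) vertices of a vortex sitting at different positions of the attachment face, identifying them forces the merged vertex $v_K$ to occupy a whole run of consecutive bags of the vortex path decomposition (or the society to list the same vertex twice, which the definition does not allow), and this can push the vortex width above $k$; similarly, a clique meeting the interior of a vortex cannot simply be ``re-attached at $v_K$'' without inflating the width. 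Since $\ell_0=(k+1)(2g+2p+3)$ depends on $k$, even a salvage of the form ``$G/K$ is $(g,p,k',0)$-almost-embeddable for some $k'>k$'' would prove a weaker bound than the one stated. The supporting assertion that a clique drawn in the surface ``bounds a disk'' is also false on surfaces of positive genus (harmless for the pure-surface case, since contracting a connected subgraph preserves Euler genus, but it signals that the locality argument has not been thought through where it matters). Finally, your ``black box'' is a claim about what the proof in \citet{DMW17} shows for almost-embeddable graphs with vortices, not about the statement of \cref{DMW}, which has no vortices; asserting that rootability at an arbitrary vertex ``is automatic'' is itself most of the content of the lemma being proved.

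For comparison, the paper does not reduce to a vertex-rooted statement at all: it simply invokes \citet{DMW17}, whose construction takes the clique $K$ itself as the first layer (a BFS-style layering grown from $K$) and builds the tree decomposition around the embedding and the vortices directly, and the paper observes that the same proof goes through verbatim when the size bound on $K$ is relaxed from $(k+1)(2g+2p+3)$ to any $\ell$ at least that large. So the clique-rooted version is obtained directly, with no contraction step; your uncontraction mechanics (the layering $(K,L_1,L_2,\dots)$ and the bag replacement) are fine, but the reduction they rest on is exactly the unproved and problematic part. To repair your route you would need an honest proof that contracting a clique preserves $(g,p,k,0)$-almost-embeddability — which I do not believe holds with the same $k$ — or you should instead argue the clique-rooted layering within the almost-embeddable structure itself, which is what the cited proof does.
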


\citet{DMW17} actually proved a result stronger than \cref{NoApexGood} that allowed for apex vertices only adjacent to vertices in the vortices, but we will not need that.  \citet{DMW17} proved that for $\ell\geq k$, if $G$ is a $(\leq k)$-clique-sum of $\ell$-good graphs $G_1$ and $G_2$, then $G$ is $\ell$-good. \cref{GMSTinduction} below generalises this result  allowing for apex vertices. We first need the following definition. Define $\omega(g,p,k,a)$ to be the maximum size of a clique in a $(g,p,k,a)$-embeddable graph. \citet{JW13} proved that  $\omega(g,p,k,a)\in\Theta(a+(k+1)\sqrt{g+p})$. Define $$\ell(g,p,k):=\max\{\omega(g,p,k,0),(k+1)(2g+2p+3)\}.$$

\begin{lem}
\label{GMSTinduction}
Let $G$ be a graph that has a tree decomposition $T$, such that $T_\alpha\cap T_\beta$ is a clique of $G$ for each edge $\alpha\beta\in E(T)$, and $G[T_\alpha]$ is $(g,p,k,a)$-almost embeddable for each node $\alpha\in V(T)$. Then $G$ has a set of vertices $A$, such that $G-A$ is $\ell$-good, where $\ell:=\ell(g,p,k)$. Moreover, for every non-empty clique $K$  in $G$ there is a tree decomposition $T^*$ of $G$, such that:
\begin{itemize}[itemsep=0ex,topsep=0ex]
\item $T^*$ restricted to $G-A$ has layered width at most $\ell$ with respect to some layering $L$ of $G-A$ in which $K-A$ is the first layer, 
\item $T^*$ restricted to $A$ has width at most $a-1$.
\end{itemize}
\end{lem}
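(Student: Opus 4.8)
The plan is to induct on $|V(T)|$. For the base case $|V(T)|=1$, we have $G=G[T_\alpha]$ is $(g,p,k,a)$-almost embeddable, so by definition there is a set $A$ of at most $a$ vertices with $G-A$ embeddable in a surface of Euler genus $g$ with at most $p$ vortices of width at most $k$, i.e.\ $G-A$ is $(g,p,k,0)$-almost embeddable. By \cref{NoApexGood}, $G-A$ is $\ell$-good, and the required tree decomposition $T^*$ is obtained by combining the layered tree decomposition of $G-A$ witnessing $\ell$-goodness for the clique $K-A$ with a single bag $A$ (or, more carefully, adding $A$ to a trivial one-bag decomposition); the ``restricted to $A$'' part has width $|A|-1\le a-1$ trivially, since $K-A$ is a clique in $G-A$ of size at most $\omega(g,p,k,0)\le\ell$ so $\ell$-goodness applies.

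For the inductive step, pick a leaf $\beta$ of $T$ with neighbour $\gamma$, and let $T'=T-\beta$, $G'=G[\bigcup_{\alpha\ne\beta}T_\alpha]$. Then $T'$, with the inherited bags, satisfies the hypotheses for $G'$, so by induction there is $A'\subseteq V(G')$ with $G'-A'$ being $\ell$-good, and for the clique $K':=(K\cap V(G'))$ — one must first check $K'$ is non-empty; since $K$ is a clique in $G$, it lies in some single bag $T_\delta$, and if $\delta\ne\beta$ then $K\subseteq V(G')$, while if $\delta=\beta$ then $K\subseteq T_\beta$ and $K\cap T_\gamma\supseteq$ (nothing forced), so this needs a small separate argument, perhaps processing the leaf $\beta$ containing $K$ last — one gets a good tree decomposition $T'^*$ of $G'$. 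Separately, the leaf graph $G[T_\beta]$ is $(g,p,k,a)$-almost embeddable, so by the base-case argument it has $A_\beta$ with $G[T_\beta]-A_\beta$ being $\ell$-good. Now $G$ is the clique-sum of $G'$ and $G[T_\beta]$ along the clique $C:=T_\beta\cap T_\gamma$. Set $A:=A'\cup A_\beta$; then $G-A$ is a clique-sum (along the clique $C-A$, of size at most $\omega(g,p,k,0)\le\ell$) of the two $\ell$-good graphs $G'-A$ and $G[T_\beta]-A$. Invoke the Dujmovi\'c--Morin--Wood clique-sum lemma (that a $(\le\ell)$-clique-sum of $\ell$-good graphs is $\ell$-good) to conclude $G-A$ is $\ell$-good, and assemble $T^*$ by gluing the two layered tree decompositions along a bag containing $C-A$ (aligning layers so that $C-A$ is the first layer of the combined layering), then adjoining the at-most-$a$-vertex set $A$; the $A$-restriction of $T^*$ is the union of the $A'$- and $A_\beta$-restrictions of the two pieces, still of width at most $a-1$ because $|A|\le a$ — wait, that needs care: $|A'|+|A_\beta|$ could exceed $a$.

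The main obstacle will be controlling the apex set: the naive union $A'\cup A_\beta$ can have up to $2a$ vertices, but the statement demands $|A|\le a$. The resolution is that one does \emph{not} take a fresh apex set at every leaf; instead one fixes a single apex set $A$ for all of $G$ up front, coming from a global application of the structure, or one observes that the apex vertices of different bags, when identified along the clique-sums, must largely coincide — more precisely, \citet{DMW17}'s stronger statement (alluded to in the excerpt, allowing apex vertices adjacent only to vortices) lets the apex vertices of a bag be ``absorbed'' consistently. Concretely, I would carry along the invariant that $A$ is a subset of a single bag and re-use it: when processing leaf $\beta$, its at-most-$a$ apex vertices must, after the clique-sum identification, lie within the apex set already chosen for the rest (because a graph that is a clique-sum has its ``non-almost-embeddable part'' confined), so $A$ never grows beyond $a$. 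Making this bookkeeping precise — specifying exactly which $a$ vertices are the apices and why the leaf's apices are among them after identification — is the delicate part; the rest (applying \cref{NoApexGood}, the clique-sum-of-good-graphs lemma, and gluing layered decompositions so a prescribed clique is the first layer) is routine given the cited results.
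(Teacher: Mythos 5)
Your overall architecture (induct along the tree $T$, handle a single $(g,p,k,a)$-almost-embeddable piece via \cref{NoApexGood}, glue layered decompositions across the clique-sum) matches the paper, which inducts on $|E(T)|$ by splitting at an arbitrary edge and applying the induction hypothesis to both sides rather than peeling off a leaf. But your proposal contains a genuine gap, and it stems from a misreading of the statement. The lemma does \emph{not} assert $|A|\leq a$; it asserts only that $T^*$ \emph{restricted to} $A$ has width at most $a-1$, i.e.\ every bag of $T^*$ meets $A$ in at most $a$ vertices. With the correct reading, the ``main obstacle'' you identify evaporates: one simply takes $A:=A^1\cup A^2$ (in the paper's notation), and each bag, coming from one side of the decomposition, contains at most $a$ apex vertices. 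Your proposed fix for the obstacle you perceived --- fixing a single apex set of size at most $a$ for all of $G$, arguing that the leaf's apices ``must largely coincide'' with it after identification --- is not just unnecessary but false: a long chain of clique-sums of apex graphs (each piece with its own apex vertex far from the join cliques) has no bounded global apex set, and nothing in the clique-sum structure forces distinct pieces' apices to be identified. So the delicate bookkeeping you defer to is not merely delicate; it cannot be carried out, and the correct route is the one you dismissed.

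A second, smaller gap is in the gluing of layerings. You say you would align the two layered decompositions ``so that $C-A$ is the first layer of the combined layering,'' but the combined layering must have $K-A$ as its first layer, and the join clique $C$ may sit arbitrarily deep in the layering of the side containing $K$. The paper handles this by first building the layering $L^1$ for the side containing $K$ (with $K-A^1$ as first layer), observing that $Q-A^1$ lies in at most two consecutive layers of $L^1$, prescribing $Q'$ (the portion of $Q-A^1$ in the earlier such layer) as the first layer when invoking induction on the other side, and then overlaying $L^2$ onto $L^1$ with the corresponding offset (first layer of $L^2$ merged with the layer of $L^1$ containing $Q'$, second layer with the next). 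Your leaf-based variant, and your worry about which clique to hand to the recursive call when $K$ lives in the leaf bag, are both resolved by the paper's symmetric treatment: split at an edge, put $K$ on one side without loss of generality, and recurse on both sides. Finally, note that invoking the DMW clique-sum lemma as a black box only yields $\ell$-goodness of $G-A$; the lemma's conclusion also demands the explicit decomposition $T^*$ of all of $G$ with the stated restriction properties, so the explicit overlay construction is needed in any case.
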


\preproof\begin{proof}
We proceed by induction on $|E(T)|$. In the base case with $|E(T)|=0$, $G$ is $(g,p,k,a)$-almost embeddable, implying $G$ contains a set $A$ of  at most $a$ vertices, such that $G-A$ is $(g,p,k,0)$-embeddable.  \cref{NoApexGood} implies that $G-A$ is $\ell$-good. Since $K-A$ is a clique in $G-A$ of size at most $\omega(g,p,k,0)\leq \ell$, there is a tree decomposition $T^*$ of $G-A$, such that $T^*$ has layered width at most $\ell$ with respect to some layering $L$ of $G-A$ in which $K-A$ is the first layer. Add $A$ to every bag of $T^*$. We obtain the desired tree decomposition of $G$, in which $T^*$ restricted to $A$ has width at most $a-1$ since $|A|\leq a$. 
This proves the base case. 

Now assume that $|E(T)|>0$. Let $xy$ be an edge of $T$. Let $Q:=T_x\cap T_y$. By assumption, $Q$ is a clique of $G$. Let $T^1$ and $T^2$ be the component subtrees of $T-xy$, where each node of $T^1$ and $T^2$ inherits its bag from the corresponding node of $T$.  For $i\in\{1,2\}$, let $G^i$ be the subgraph of $G$ induced by the union of the bags in $T^i$. Then $T^i$ is a tree decomposition of $G^i$, such that  $T_\alpha\cap T_\beta$ is a clique of $G^i$ for each edge $\alpha\beta\in E(T^i)$, and $G^i[T^i_\alpha]$ is $(g,p,k,a)$-almost embeddable for each node $\alpha\in V(T^i)$. Note that $G$ is obtained by pasting $G^1$ and $G^2$ on $Q$. 

Without loss of generality, the given clique $K$ of $G$ is in $G^1$. By induction, $G^1$ has a set of vertices $A^1$ and a tree decomposition $T^1$, such that $T^1$ restricted to $G^1-A^1$ has layered width at most $\ell$ with respect to some layering $L^1$ of $G^1-A^1$ in which $K-A^1$ is the first layer of $L^1$, and $T^1$ restricted to $A^1$ has width at most $a-1$. In $L^1$, the clique $Q-A^1$ is contained in one layer or in two consecutive layers. Let $Q'$ be the subclique of $Q-A^1$ contained in the first layer of $L^1$ that intersects $Q-A^1$. Note that if $(K\cap Q)\setminus A'\neq\emptyset$ then $Q'=(K\cap Q)\setminus A^1$.

By induction, $G^2$ has a set of vertices $A^2$ and a tree decomposition $T^2$, such that $T^2$ restricted to $G^2-A^2$ has layered width at most $\ell$ with respect to some layering $L^2$ of $G^2-A^2$ in which $Q'-A^2$ is the first layer of $L^2$, and $T^2$ restricted to $A^2$ has width at most $a-1$. Since $Q'\setminus A^2$ is the first layer, $(Q\setminus Q')\setminus A^2$ is contained within the second layer. 

Let $T^*$ be obtained from $T^1$ and $T^2$ by adding an edge between a bag of $T^1$ that contains $Q$ and a bag of $T^2$ that  contains $Q$. Since $Q$ is a clique, such bags exist. Now $T^*$ is a tree decomposition of $G$. Let $A:=A^1\cup A^2$. Then $T^*$ restricted to $A$ has width at most $a-1$, since $T^1$ restricted to $A^1$ has width at most $a-1$ and $T^2$ restricted to $A^2$ has width at most $a-1$. 

Delete each vertex in $A^2$ from each layer of $L^1$, and delete each vertex in $A^1$ from each layer of $L^2$. Now, no vertex in $A$ appears in a layer of $L^1$ or $L^2$. In particular, the first layer of $L^1$ equals $K\setminus A$. 

Construct a layering $L$ of $G-A$ by overlaying $L^1$ and $L^2$ so that the layer of $L^1$ that contains $Q'$ is merged with the first layer of $L^2$ (which equals $Q'-A^2$), and the layer of $L^1$ that contains $(Q\setminus Q')\setminus A^1$ is merged with the second layer of $L^2$ (which contains $(Q\setminus Q')\setminus A^2$). Then $L$ is a layering of $G-A$, since the vertices in common between $G^1-A$ and $G^2-A$ are exactly the vertices in $Q-A$. 

Consider the first layer of $L$, which consists of $K\setminus A$, plus any vertices added in the construction of $L$. If no such vertices are added, then the first layer of $L$ equals $K\setminus A$, as desired. Now assume that some vertices are added. Then the first layer of $L^1$ was merged with the first layer of $L^2$. Thus, by construction,  the first layer of $L^1$ contains $Q'$. Thus $Q'\subseteq K\setminus A^1$ and $Q'\setminus A^2\subseteq K\setminus A$. Thus the first layer of $L^2$ is a subset of the first layer of $L^1$, and the first layer of $L$ equals the first layer of $L^1$, which equals  $K\setminus A$, as desired. 

For each bag $T^*_\alpha$ of $T^*$ the intersection of $T^*_\alpha$ with a single layer of $L$ is a subset of the intersection of $T^*_\alpha$ and the corresponding layer in $L^1$ or $L^2$. Hence $T^*$ restricted to $G-A$ has layered width at most $\ell$ with respect to $L$.
%
\end{proof}

\cref{GMSTinduction} leads to the following theorem.

\begin{thm}
\label{ExcludedMinorTreePathStructure}
For every fixed graph $H$ there is a constant $k$, such that every $H$-minor-free graph $G$ has a tree decomposition $T^*$, a weak path decomposition $P$ of magnitude at most $k|V(G)|$, and a set of vertices $A$, such that 
\begin{itemize}[itemsep=0ex,topsep=0ex]
\item $T^*$ and $P$ are $k$-orthogonal, 
\item $T^*$ restricted to $A$ has width at most $k$, 
\item $P$ restricted to $G-A$ is a layering $L$, and
\item $T^*$ restricted to $G-A$ has layered width at most $k$ with respect to $L$. 
\end{itemize}
\end{thm}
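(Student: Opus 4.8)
The plan is to feed the graph minor structure theorem into \cref{GMSTinduction} and then manufacture the required weak path decomposition by folding the apex set into every layer of the layering that \cref{GMSTinduction} produces.

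\emph{Step 1: the structure theorem.} By \cref{GMST} there are constants, depending only on $H$, such that $G$ has a tree decomposition $T$ in which the torso of every bag is almost-embeddable with those parameters. Since \cref{GMSTinduction} speaks of the induced subgraphs $G[T_\alpha]$ and needs each adhesion set to be a clique, I would first pass to the supergraph $G^+$ of $G$ on vertex set $V(G)$ obtained as the union of all torsos. The standard property that any $v\in T_\alpha\cap T_\beta$ lies in every bag on the $\alpha$--$\beta$ path of $T$ shows that $G^+[T_\alpha]$ is exactly the torso of $T_\alpha$, hence almost-embeddable, and that every $T_\alpha\cap T_\beta$ is a clique of $G^+$. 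So $(G^+,T)$ satisfies the hypotheses of \cref{GMSTinduction}.

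\emph{Step 2: apply \cref{GMSTinduction}.} Taking $K$ to be a single vertex, it produces a set $A\subseteq V(G)$ and a tree decomposition $T^*$ of $G^+$ such that $T^*$ restricted to $A$ has width at most $a-1$ (where $a$ is the apex bound), and $T^*$ restricted to $G^+-A$ has layered width at most $\ell$ with respect to some layering $L=(V_0,\dots,V_t)$ of $G^+-A$; here $\ell$ and $a$ depend only on $H$, and after discarding empty layers $t+1\le|V(G)|$. Since $G$ is a spanning subgraph of $G^+$, the same $T^*$ is a tree decomposition of $G$, and $L$ remains a layering of $G-A$ with the same layered-width bound for $T^*$ restricted to $G-A$ — deleting edges relaxes all of these conditions.

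\emph{Step 3: construct $P$ and finish.} Put $P:=(V_0\cup A,\,V_1\cup A,\,\dots,\,V_t\cup A)$. Since $A$ lies in every bag, each vertex of $V_i$ lies only in $P_i$, and $L$ is a layering, a direct check shows $P$ is a weak path decomposition of $G$; deleting $A$ from its bags recovers $L$, so $P$ restricted to $G-A$ is the layering $L$. Its magnitude is $\sum_i(|V_i|+|A|)=|V(G)|-|A|+(t+1)|A|\le(a+1)|V(G)|$. For each bag $T^*_x$ and index $i$,
\[
|T^*_x\cap P_i|\le|T^*_x\cap V_i|+|A|\le\ell+a,
\]
using $V_i\cap A=\emptyset$ and the layered-width bound. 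Hence $k:=\ell+a+1$ — which also exceeds $a-1$ and the magnitude constant — witnesses all four conclusions, and depends only on $H$. Essentially all the heavy lifting here is already in \cref{GMSTinduction} (and \cref{NoApexGood} behind it), so the theorem is close to a corollary; the only point needing genuine care is the round trip from $G$ to $G^+$ and back, namely that the union of torsos has cliquish adhesions and induced-almost-embeddable bags (so \cref{GMSTinduction} applies), and that re-inserting the missing edges of $G$ destroys none of the structure \cref{GMSTinduction} gives — both immediate from the path property of tree decompositions and the fact that deleting edges never violates the definitions of tree decomposition, layering, or layered width.
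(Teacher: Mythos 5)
Your Steps 1 and 2 match the paper's proof (the paper simply says ``add edges to $G$ so that adjacent bags intersect in cliques''; your torso-union justification is a correct fleshing-out of that sentence). The genuine gap is in Step 3, and it is fatal to the construction as written: you treat the set $A$ returned by \cref{GMSTinduction} as if $|A|\leq a$. The lemma does \emph{not} bound $|A|$; it only bounds the width of $T^*$ restricted to $A$ by $a-1$, i.e.\ each bag of $T^*$ meets $A$ in at most $a$ vertices. In the inductive step of \cref{GMSTinduction} the apex sets of the two sides are unioned ($A:=A^1\cup A^2$), so $|A|$ can be of order $a\cdot|V(T)|$, which is typically $\Theta(|V(G)|)$. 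Consequently your bag choice $P_i:=V_i\cup A$ has magnitude $|V(G)|-|A|+(t+1)|A|$, and with both $t$ and $|A|$ linear in $n$ this is $\Theta(n^2)$, not $O(n)$; the claimed bound $(a+1)|V(G)|$ rests on the false premise $|A|\leq a$. (Your orthogonality estimate $|T^*_x\cap P_i|\leq|T^*_x\cap V_i|+|A|$ has the same flaw but is repairable, since $|T^*_x\cap A|\leq a$ follows from the width bound on $T^*$ restricted to $A$; the magnitude is not repairable within your construction.)

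The paper avoids this by building $P$ more frugally: for each node $\alpha$ of the \emph{original} structure-theorem decomposition $T$, the vertices of $T_\alpha\cap A$ are added only to those layers of $L$ that intersect $T_\alpha$, not to all layers. Two further points then need checking, both of which the paper does: (i) each apex vertex ends up in a consecutive set of layers, so $P$ is a weak path decomposition --- this uses a detail from inside the proof of \cref{GMSTinduction} (when $A\cap Q\neq\emptyset$, apex vertices are added only to the at most two layers containing $Q\setminus A$); and (ii) the total number of added occurrences is at most $2a|T_\alpha\setminus A|$ per bag, summing to at most $2a|V(G-A)|$, which gives magnitude at most $(2a+1)|V(G)|$. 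So the theorem is not quite the immediate corollary you describe: the linear-magnitude requirement forces a localized insertion of the apex vertices into the layering, and that is exactly where the remaining work in the paper's proof lies.
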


\preproof\begin{proof}
\cref{GMST} says there are constants $g,p,k,a$ such that $G$ has a tree decomposition $T$ in which each torso is $(g,p,k,a)$-almost embeddable. Add edges to $G$ so that the intersection of any two adjacent bags in $T$ is a clique. 
Now, $G[T_\alpha]$ is $(g,p,k,a)$-almost embeddable for each node $\alpha\in V(T)$. By \cref{GMSTinduction}, $G$ has a set of vertices $A$ and a tree decomposition $T^*$, such that  $T^*$ restricted to $G-A$ has layered width at most $\ell=\ell(g,p,k,a)$ with respect to some layering $L$ of $G-A$, and $T^*$ restricted to $A$ has width at most $a-1$. 

For each node $\alpha\in V(T)$, add every vertex in $T_\alpha \cap A$ to every layer of $L$ that intersects $T_\alpha$. This produces a weak path decomposition $P$ of $G$, since in the proof of \cref{GMSTinduction}, if $A\cap Q\neq\emptyset$ then $A$ is added to the at most two layers containing $Q\setminus A$, implying that each vertex in $A$ is added to a consecutive subset of the layers. Moreover, $|T^*_x\cap P_y|\leq \ell+a$ for each node $x\in V(T^*)$ and node $y\in V(P)$, since  $T^*_x$ contains at most $\ell$ vertices in the layer corresponding to $y$, and at most $a$ vertices in $T^*_x\cap A$ are added to $P_y$. Finally, we bound the magnitude of $P$. Since each vertex of $G-A$ is in exactly one layer of $L$, the size of $L$ equals $|V(G-A)|$. For each node $\alpha\in V(T)$, the bag $T_\alpha$ uses at least two layers, and except for the first layer used by  $T_\alpha$, there is at least one vertex in $T_\alpha\setminus A$ in each layer used by $T_\alpha$. For each such layer, at most $a$ vertices in $T_\alpha\cap A$ are added to this layer in the construction of $P$. Thus there at most $2a |T_\alpha \setminus A|$ occurences of vertices in $T_\alpha\cap A$ in $P$. Thus the total number of occurrences in $P$ of vertices in $A$ is at most $2a|V(G-A)|$. Hence the magnitude of $P$ is at most $(2a+1)|V(G-A)|\leq (2a+1) |V(G)|$. 

The result follows with $k:=\max\{\ell+a,2a+1\}$. 
%
%
%
%
%
%
%
\end{proof}



\cref{RS,TreewidthUpperBound,PathwidthUpperBound,ExcludedMinorTreePathStructure} imply the following result due to \citet{AST90}, reproved by  \citet{Grohe-Comb03} and \citet{KawaReed10}. 
 It is interesting that our approach using orthogonal decompositions also reproves this result. 
\begin{thm}
\label{ExcludedMinorTreewidth}
For every fixed graph $H$, every $H$-minor-free graph on $n$ vertices has treewidth and pathwidth at most $O(\sqrt{n})$ and has a 
separator of order $O(\sqrt{n})$. 
\end{thm}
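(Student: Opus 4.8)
The plan is to chain together the earlier results exactly as the citation list in the theorem statement suggests. Given a fixed graph $H$, apply \cref{ExcludedMinorTreePathStructure} to obtain a constant $k$ (depending only on $H$) such that every $H$-minor-free graph $G$ admits a tree decomposition $T^*$ and a weak path decomposition $P$ of magnitude at most $k|V(G)|$ that are $k$-orthogonal. (The extra structure about the apex set $A$ and the layering $L$ is not needed for this corollary.) Since the class of $H$-minor-free graphs is closed under taking subgraphs, and in particular is hereditary, the hypotheses of the relevant lemmas are satisfied not just for $G$ but for every subgraph of $G$.

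First I would apply \cref{TreewidthUpperBound} with the tree decomposition $T^*$, the weak path decomposition $P$, orthogonality parameter $k$, and magnitude $s \le k|V(G)| = kn$. This immediately yields $\tw(G) \le 2\sqrt{k\cdot kn} - 1 = 2k\sqrt{n} - 1 = O(\sqrt{n})$, with the implied constant depending only on $H$. Next, for the pathwidth bound, I would invoke \cref{PathwidthUpperBound} with $\mathcal{G}$ taken to be the (hereditary) class of $H$-minor-free graphs, orthogonality parameter $k$, and magnitude bound $cn$ with $c = k$; this gives $\pw(G) \le 11\sqrt{k\cdot k\cdot n} - 1 = 11k\sqrt{n} - 1 = O(\sqrt{n})$. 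Finally, the separator bound follows by feeding the treewidth bound into \cref{RS}: $G$ has a separator of size at most $\tw(G) + 1 \le 2k\sqrt{n} = O(\sqrt{n})$.

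There is essentially no obstacle here, since all the hard work has been done: \cref{ExcludedMinorTreePathStructure} is the substantive structural input (it is where the graph minor structure theorem and the clique-sum induction of \cref{GMSTinduction} are used), and \cref{TreewidthUpperBound}, \cref{PathwidthUpperBound}, and \cref{RS} are the ready-made translation devices. The only small care needed is to verify that the class is hereditary (so that \cref{PathwidthUpperBound} applies) and that the magnitude bound from \cref{ExcludedMinorTreePathStructure} is genuinely linear with a constant depending only on $H$ — both of which are immediate. The point of the theorem, as the remark in the excerpt indicates, is not novelty of the bound (it is due to \citet{AST90}) but that the orthogonal-decomposition machinery gives an alternative route to it.
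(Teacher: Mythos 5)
Your proposal is correct and follows exactly the route the paper intends: it chains \cref{ExcludedMinorTreePathStructure} with \cref{TreewidthUpperBound}, \cref{PathwidthUpperBound}, and \cref{RS}, noting (as needed for \cref{PathwidthUpperBound}) that the class of $H$-minor-free graphs is hereditary and that the magnitude bound is linear with a constant depending only on $H$. The paper states this theorem as an immediate consequence of those four results without spelling out the chaining, so your write-up is just a more explicit version of the same argument.
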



\section{String Graphs}
\label{StringGraphs}

A \emph{string graph} is the intersection graph of a set of curves in the plane with no three curves meeting at a single point \citep{PachToth-DCG02,SS-JCSS04,SSS-JCSS03,Krat-JCTB91,FP10,FP14}. For an integer $k\geq 2$, if each curve is in at most $k$ intersections with other curves, then the corresponding string graph is called a \emph{$k$-string graph}. Note that the maximum degree of a $k$-string graph might be much less than $k$, since two curves might have multiple intersections. A \emph{$(g,k)$-string} graph is defined analogously for curves on a surface of Euler genus at most $g$. 

\begin{thm}
\label{LayeredTreewidthString}
Every $(g,k)$-string graph has layered treewidth at most $2(k-1)(2g+3)$.
\end{thm}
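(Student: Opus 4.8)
The plan is to reduce the statement to \cref{DMW} by passing to the planarization of the curve arrangement and transferring the resulting layered tree decomposition to $G$. Fix a drawing of $G$ as a $(g,k)$-string graph: curves $(c_v:v\in V(G))$ in a surface $\Sigma$ of Euler genus $g$ in general position, no three sharing a point, with each $c_v$ meeting the other curves in at most $k$ points. Let $H$ be the planarization: the vertices of $H$ are the crossing points and the edges of $H$ are the arcs joining consecutive crossing points along a curve. As $H$ is drawn in $\Sigma$ without crossings, its Euler genus is at most $g$, so by \cref{DMW} there are a tree decomposition $(B_x:x\in V(T))$ of $H$ and a layering $(W_0,W_1,\dots)$ of $H$ with $|B_x\cap W_i|\leq 2g+3$ for all $x$ and $i$; write $\lambda(h)$ for the index of the layer containing $h\in V(H)$. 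A curve meeting no other curve is an isolated vertex of $G$; such vertices can be placed in singleton bags, all in one common layer, without affecting anything, so assume henceforth that each $c_v$ meets another curve.

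Now transfer both structures to $G$. For $v\in V(G)$ let $X_v\subseteq V(H)$ be the set of crossing points on $c_v$, so $1\leq|X_v|\leq k$ and $H[X_v]$ is connected (consecutive crossing points along $c_v$ are joined by an arc). Set $T^*_x:=\{v:X_v\cap B_x\neq\emptyset\}$. This is a tree decomposition of $G$: for an edge $vw$ the curves $c_v,c_w$ cross at some $h\in X_v\cap X_w$, and any bag containing $h$ contains $v$ and $w$; and for each $v$ the set $\{x:v\in T^*_x\}=\{x:B_x\cap X_v\neq\emptyset\}$ is a non-empty subtree, since $X_v$ is non-empty and connected in $H$. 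For the layering, connectedness of $H[X_v]$ forces the layer indices met by $X_v$ to form an interval, of length at most $|X_v|\leq k$; let $m(v)$ be its least element, so $X_v$ lies in $W_{m(v)}\cup\dots\cup W_{m(v)+k-1}$, and put $v$ in layer $\lfloor m(v)/(k-1)\rfloor$ of $G$. For an edge $vw$ with common crossing point $h$, both $m(v)$ and $m(w)$ lie in $[\lambda(h)-(k-1),\lambda(h)]$, so $|m(v)-m(w)|\leq k-1$, and the two layer indices differ by at most $1$: this is a layering of $G$.

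It remains to bound the layered width. Fix $x\in V(T)$ and a layer index $j$, and for each $v\in T^*_x$ in layer $j$ choose a witness $h(v)\in X_v\cap B_x$. Since $m(v)\in[j(k-1),(j+1)(k-1)-1]$ and $\lambda(h(v))\in[m(v),m(v)+k-1]$, the point $h(v)$ lies in one of the $2(k-1)$ layers $W_{j(k-1)},\dots,W_{(j+2)(k-1)-1}$, hence in a set of at most $2(k-1)(2g+3)$ crossing points. This is essentially the whole argument; the delicate point — and the main obstacle — is the final bookkeeping. Because no three curves share a point, each crossing point lies on exactly two curves, so $v\mapsto h(v)$ is only guaranteed to be two-to-one, which gives $4(k-1)(2g+3)$ at face value. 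Squeezing this down to the claimed $2(k-1)(2g+3)$ amounts to removing the factor of $2$ caused by a crossing-point vertex of $H$ being shared between two curves: one wants either a representation of $G$ in which each intersection point is interior to only one of its two curves (so distinct curves have vertex-disjoint backbones in the auxiliary graph) combined with the same transfer, or a sharper count that charges each shared crossing point to at most one of its two curves within a fixed layer. The reduction to \cref{DMW}, the verification that $T^*$ and the layering are legitimate, and the handling of isolated vertices are all routine; the interesting content is concentrated in this constant-factor refinement.
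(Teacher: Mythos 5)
Your construction is exactly the paper's: planarize the curve arrangement, apply \cref{DMW} to the planarization to obtain a tree decomposition and layering with bags meeting layers in at most $2g+3$ vertices, form the tree decomposition of $G$ by replacing each crossing vertex with the two curves through it, and layer $G$ by grouping the minimum layer index of each curve (your $m(v)$, the paper's $f(v)$) into blocks of size $k-1$. Your verifications of the tree decomposition property and of the layering property coincide with the paper's. The only divergence is the final count, and there your proposal is, as you yourself say, incomplete as a proof of the stated constant: you establish $|T^*_x\cap V_i|\leq 4(k-1)(2g+3)$ and leave the improvement to $2(k-1)(2g+3)$ unresolved, so formally there is a gap.

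You should know, however, that the paper does not contain the refinement you are looking for. Its proof ends with the single assertion that ``each layer in $G$ is formed from at most $k-1$ layers in $G'$,'' each such layer of $G'$ meets a bag in at most $2g+3$ crossing points, and each crossing point becomes two vertices of $G$, yielding $2(k-1)(2g+3)$. That is, it charges every vertex of $T_x\cap V_i$ to a crossing point lying in the $k-1$ layers $V'_{i(k-1)},\dots,V'_{(i+1)(k-1)-1}$ --- precisely the step you flag as not automatic, since a vertex $v$ with $f(v)$ near the top of its block may belong to $T_x$ only via a crossing point up to $k-1$ layers beyond that window, so the witnesses a priori range over $2(k-1)$ layers of $G'$ and the straightforward count gives $4(k-1)(2g+3)=2(k-1)(4g+6)$. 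So the obstacle you identify is a genuine imprecision in the paper's own bookkeeping rather than an idea you failed to find: either an extra charging argument of the kind you sketch must be supplied, or the theorem's constant should be read as $4(k-1)(2g+3)$. This factor of $2$ is harmless for every subsequent use of \cref{LayeredTreewidthString} up to constants (e.g.\ the segment-intersection corollary and the asymptotic tightness claims), but your bound, not the stated one, is what the written construction delivers without further argument.
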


\preproof\begin{proof}
Let $X$ be a set of curves in a surface of Euler genus at most $g$, such that no three curves meet at a point and each curve is in at most $k$ intersections with other curves in $X$. Let $G$ be the corresponding $(g,k)$-string graph. Let $G'$ be the graph obtained from $G$ by replacing each intersection point of two curves in $X$ by a vertex, where each curve is now a path on at most $k$ vertices. Thus $G'$ has Euler genus at most $g$. By \cref{DMW}, $G'$ has a tree decomposition $T'$ with layered width at most $2g+3$ with respect to some layering $V'_0,V_1',\dots,V_t'$. For each vertex $v$ of $G$, if $x_1,\dots,x_\ell$ is the path representing $v$ in $G'$, then $\ell\leq k$ and $x_1,\dots,x_\ell$ is contained in at most $k$ consecutive layers of $G'$. 

For each vertex $x$ of $G'$, let $T'_x$ be the subtree of $T$ formed by the bags that contain $x$. 
Let $T$ be the decomposition of $G$ obtained by replacing each occurrence of a vertex $x$ in a bag of $T'$ by the two vertices of $G$ that correspond to the two curves that intersect at $x$. 
We now show that $T$ is a tree decomposition of $G$. For each vertex $v$ of $G$, let $T_v$ be the subtree of $T$ formed by the bags that contain $v$. If $x_1,x_2,\dots,x_\ell$ is the path in $G'$ representing a vertex $v$ of $G$, then $T_v=T'_{x_1}\cup\dots\cup T'_{x_\ell}$, which is connected since each $T'_i$ is connected, and $T'_i$ and $T'_{i+1}$ have a node in common (containing $x_i$ and $x_{i+1}$). For each edge $vw$ of $G$, if $x$ is the vertex of $G'$ at the intersection of the curves representing $v$ and $w$, then $T_v$ and $T_w$ have $T'_x$ in common. Thus there is a bag containing both $v$ and $w$. Hence $T$ is a tree decomposition of $G$. 

For each vertex $v$ of $G$, let $f(v)$ be the minimum integer $i$ such that $V'_i$ contains a vertex $x$ of $G'$ in the curve corresponding to $v$. For $i\geq 0$, let $V_i:=\{v\in V(G): i(k-1) \leq f(v) \leq (i+1)(k-1)-1\}$. 
Then $V_0,V_1,\dots$ is a partition of $V(G)$. Consider an edge $vw$ of $G$ with $f(v) \leq f(w)$ and $v\in V_i$. Then the path in $G'$ representing $v$ is contained in layers $V'_{f(v)},V'_{f(v)+1},\dots,V'_{f(v)+k-1}$. Thus $f(w)\leq f(v)+k-1\leq (i+1)(k-1)-1+(k-1)\leq (i+2)(k-2)-1$. Since $f(w)\geq f(v)\geq i(k-1)$ we have $w\in V_i\cup V_{i+1}$. Hence $V_0,V_1.\dots$ is a layering of $G$.

Since each layer in $G$ is formed from at most $k-1$ layers in $G'$, and each layer in $G'$ contains at most $2g+3$ vertices in a single bag, each of which is replaced by two vertices in $G$, the layered treewidth of this decomposition is at most $2(2g+3)(k-1)$. 
\end{proof}


Every intersection graph of segments in the plane with maximum degree $k\geq 2$ is a $(0,k)$-string graph. Thus  \cref{Norine,Pathwidth,LayeredTreewidthString}  (since $(0,k)$-string graphs are a hereditary class) imply:

\begin{cor}
Every intersection graph of $n$ segments in the plane with maximum degree $k\geq 2$ has layered treewidth at most $6(k-1)$ and treewidth at most $2\sqrt{6(k-1)n}$ and pathwidth at most $11\sqrt{6(k-1)n}$.
\end{cor}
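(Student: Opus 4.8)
The plan is to read off the corollary from \cref{LayeredTreewidthString,Norine,Pathwidth}: essentially all the content is the observation, already flagged in the text, that an intersection graph of segments with maximum degree $k$ is a $(0,k)$-string graph.

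I would first make that observation precise, which is the one step that needs a little care. Let $G$ be the intersection graph of a set $X$ of $n$ segments in the plane with $\Delta(G)\le k$. Two distinct non-collinear segments meet in at most one point, so a segment whose vertex has degree at most $k$ is in at most $k$ intersection points with the others; the only obstacles to $X$ being a valid string representation are two collinear overlapping segments and three segments through a common point. Both are removed by a routine two-step perturbation: first extend every segment by a tiny amount at each end --- small enough to create no new crossing --- so that every surviving intersection lies in the relative interiors of both segments; then move all segment endpoints by a sufficiently small generic amount. The extension step does not change the intersection graph, and after it every adjacency is realised by a transversal interior crossing or a collinear overlap, each of which survives (as a single transversal crossing) under a small generic move, while no new adjacency is created; genericity kills all collinearities and triple points. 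After this reduction, each segment is a curve meeting the others in at most $k$ points and no three curves are concurrent, so $X$ certifies that $G$ is a $(0,k)$-string graph. This perturbation bookkeeping is the main (and essentially only) obstacle, and it is minor.

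It then remains to quote the three earlier results. By \cref{LayeredTreewidthString} with $g=0$, $G$ has layered treewidth at most $2(k-1)(2\cdot 0+3)=6(k-1)$; note that here $k$ is the maximum degree, whereas in \cref{Norine,Pathwidth} the relevant parameter is the layered treewidth, for which we substitute $6(k-1)$. This gives $\tw(G)\le 2\sqrt{6(k-1)n}-1<2\sqrt{6(k-1)n}$ from \cref{Norine} and $\pw(G)\le 11\sqrt{6(k-1)n}-1<11\sqrt{6(k-1)n}$ from \cref{Pathwidth}. The application of \cref{Pathwidth} is legitimate because $(0,k)$-string graphs form a hereditary class --- deleting segments only lowers degrees --- and layered treewidth is monotone under induced subgraphs, so the hypothesis underlying the pathwidth estimate holds. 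This proves the corollary.
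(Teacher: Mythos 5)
Your proof follows the paper's own route exactly: identify the segment intersection graph as a $(0,k)$-string graph, apply \cref{LayeredTreewidthString} with $g=0$ to get layered treewidth $6(k-1)$, then invoke \cref{Norine,Pathwidth} (noting heredity for the pathwidth bound). The only difference is that you spell out the perturbation argument handling collinear overlaps and triple points, which the paper leaves implicit; that extra care is correct and harmless.
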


We now show that this corollary is asymptotically tight.

\begin{prop}
\label{SegmentIntersectiongraphLowerBound}
For $k\geq 6$ and for infinitely many values of $n$, there is a set of $n$ segments in the plane, whose intersection graph has maximum degree $k$, layered treewidth at least $\tfrac{1}{256} (k-5)$, and treewidth at least $\tfrac18 \sqrt{(k-5)n}-1$. 
\end{prop}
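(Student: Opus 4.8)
By \cref{Norine}, an $n$-vertex graph of layered treewidth $\ell$ has treewidth at most $2\sqrt{\ell n}-1$; hence if $\tw(G)\ge\tfrac18\sqrt{(k-5)n}-1$, then $2\sqrt{\ell n}\ge\tfrac18\sqrt{(k-5)n}$, i.e.\ $\ell\ge\tfrac1{256}(k-5)$. So it suffices, for infinitely many $n$, to construct a set of $n$ segments in the plane whose intersection graph has maximum degree at most $k$ and treewidth at least $\tfrac18\sqrt{(k-5)n}-1$.

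\textbf{The thickened grid.} Put $p:=\lfloor k/5\rfloor$ and let $m$ be a large integer. Near each lattice point $(i,j)$ with $1\le i,j\le m$ we place a bundle $H_{ij}$ of $p$ mutually parallel, near-horizontal, short segments at ``height'' $j$, and a bundle $V_{ij}$ of $p$ mutually parallel, near-vertical, short segments at ``abscissa'' $i$; their lengths, exact slopes and tiny perturbations are chosen so that the intersecting pairs of segments are \emph{exactly} those given by: (i) every segment of $H_{ij}$ meets every segment of $V_{ij}$; and (ii) every segment of $H_{ij}$ meets every segment of $V_{i-1,j}$ and of $V_{i+1,j}$, and every segment of $V_{ij}$ meets every segment of $H_{i,j-1}$ and of $H_{i,j+1}$. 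Let $G$ be the resulting intersection graph, on $n:=2pm^2$ vertices. By (i) and (ii) a segment of $H_{ij}$ meets only segments of $V_{i-1,j}$, $V_{ij}$, $V_{i+1,j}$, $V_{i,j-1}$ and $V_{i,j+1}$ (the last two because every segment of $V_{i,j\pm1}$ meets $H_{ij}$ by~(ii)), and symmetrically for a segment of $V_{ij}$; so $G$ has maximum degree at most $5p\le k$ (and a few extra tiny segments make the maximum degree exactly $k$).

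\textbf{The treewidth bound.} Inside each cell $(i,j)$ contract a perfect matching between $H_{ij}$ and $V_{ij}$: these are $p$ pairwise-disjoint edges of $G$ by~(i), and contracting them merges each cell into a clique $K_p$, while by~(ii) the meetings of $H_{ij}$ with $V_{i\pm1,j}$ and of $V_{ij}$ with $H_{i,j\pm1}$ become complete joins between the cliques of horizontally and of vertically adjacent cells. Thus $G$ has as a minor the graph $G^\ast$ obtained from the $m\times m$ grid by blowing up each vertex into a clique $K_p$ and each edge into a complete join, whence $\tw(G)\ge\tw(G^\ast)$. Fix in each cell an ordering $1,\dots,p$ of its clique-vertices and call the $a$-th clique-vertices of all cells the \emph{$a$-th layer}. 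For $i,j\in[m]$ and $a\in[p]$ let $R^a_i$ (resp.\ $L^a_j$) be the set of $a$-th clique-vertices of the cells in row $i$ (resp.\ column $j$); the complete joins make each $R^a_i$ and each $L^a_j$ connected, so each cross $R^a_i\cup L^a_j$ is connected, and any two crosses $R^a_i\cup L^a_j$ and $R^{a'}_{i'}\cup L^{a'}_{j'}$ touch, since the clique of cell $(i,j')$ contains the $a$-th clique-vertex (lying in $R^a_i$) and the $a'$-th clique-vertex (lying in $L^{a'}_{j'}$), and these are equal or adjacent. Hence $\{R^a_i\cup L^a_j:i,j\in[m],\,a\in[p]\}$ is a bramble of $G^\ast$. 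Any hitting set $X$ must, for each fixed $a$, meet every $R^a_i\cup L^a_j$; as these all lie in the $a$-th layer, the $a$-th clique-vertices in $X$ form a set meeting every ``row'' $R^a_i$ and every ``column'' $L^a_j$ of the $m\times m$ array of $a$-th clique-vertices, which forces at least $m$ of them. The $p$ layers being disjoint, $|X|\ge pm$, so $\tw(G^\ast)\ge pm-1$ and therefore $\tw(G)\ge pm-1$. Since $n=2pm^2$ we have $pm=\sqrt{pn/2}$, and $p=\lfloor k/5\rfloor\ge(k-5)/5$ gives $\tw(G)\ge\sqrt{(k-5)n/10}-1\ge\tfrac18\sqrt{(k-5)n}-1$; the layered-treewidth bound then follows from the first paragraph.

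\textbf{The main obstacle} is realising properties (i) and (ii) with straight segments and \emph{nothing else crossing}: a segment of $H_{ij}$ long enough to reach the vertical bundles of the cells horizontally adjacent to $(i,j)$ must be prevented from also crossing the vertical bundles of the diagonally adjacent cells (and similarly for the vertical bundles), which is what pins down the admissible lengths, slopes and offsets and hence the degree bound. Everything after the construction — the contraction to $G^\ast$, the cross-bramble, and the arithmetic — is routine.
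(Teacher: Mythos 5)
Your overall strategy is sound and genuinely different from the paper's: you build a ``thickened grid'' of segments directly, extract a blown-up grid minor, and prove the treewidth lower bound by an explicit bramble, whereas the paper takes the line graph of a straight-line (F\'ary) drawing of an auxiliary planar graph $Y'_{q,r}$ (a subdivided grid with added face vertices) and imports the treewidth/separator lower bound from Lemma~5.6 of \citep{DEW17} together with \cref{RS}. Your bramble argument, the minor extraction via contracting a perfect matching in each cell, the arithmetic, and the reduction of the layered-treewidth bound to the treewidth bound via \cref{Norine} (which is exactly how the paper gets its $\tfrac{1}{256}(k-5)$ bound as well) are all correct.

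However, there is a genuine gap, and you name it yourself: the entire construction rests on the claim that lengths, slopes and offsets ``are chosen so that the intersecting pairs of segments are \emph{exactly}'' those in (i) and (ii), and this is never established. This is not a routine detail: a segment of $H_{ij}$ must reach the vertical bundles of cells $(i\pm1,j)$, which sit at essentially the same height as the lower ends of the bundles $V_{i\pm1,j\pm1}$ (those ends must dip down far enough to cross all of $H_{i\pm1,j}$), so diagonal-cell crossings are exactly what a naive placement produces; if they occur, the maximum degree can jump to as much as $9p$, destroying the only property that makes the example interesting (the treewidth bound survives extra edges, the degree bound does not). The pattern \emph{is} realizable --- for instance, make all horizontal-family segments parallel with a common slight rise per cell and all vertical-family segments parallel with a common slight slant, so that within each junction the through-bundles, the two horizontal stub families and the two vertical stub families occupy pairwise separated micro-strips, with each stub terminated just after it clears the neighbouring through-bundle --- but writing down such a placement and checking the forbidden crossings is precisely the content your proof defers (``the main obstacle''). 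The paper's route avoids this issue entirely: its segments are the edges of a crossing-free straight-line drawing of a planar graph, so the intersection pattern is the line graph by construction and the degree bound is just $\deg(v)+\deg(w)\le k$. As written, then, your argument is an attractive alternative skeleton (and your bramble replaces the external separator lemma nicely), but it is incomplete until the geometric realization is supplied.
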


\preproof\begin{proof}
Let $G$ be a planar graph such that $\deg(v)+\deg(w)\leq k$ for every edge $vw$. 
By F\'ary's Theorem \citep{Fary48}, there is a crossing-free drawing of $G$ with each edge a segment. 
Then the intersection graph of $E(G)$ is the line graph of $G$, denoted by $L(G)$. 
Note that the degree of a vertex in $L(G)$ corresponding to an edge $vw$ in $G$ equals $\deg(v)+\deg(w)$. 

Let $r:=\floor{\frac{k-2}{4}}$. Infinitely many values of $n$ satisfy $n=4q^2r$ for some integer  $q\geq 1$. Let $Y'_{q,r}$ be the plane graph obtained from the $(q+1)\times(q+1)$ grid graph by subdividing each edge $r$ times, then adding a vertex of degree $4r$ inside each internal face adjacent to the subdivision vertices, and finally  deleting the grid edges and the non-subdivision vertices of the grid. Note that $\deg(v)+\deg(w)\leq 4r+2\leq k$ for each edge $vw$ of $Y'_{q,r}$. 

Observe that the line graph $L(Y'_{q,r})$ has $n$ vertices, and is exactly the graph $Z_{q,q,r}$ introduced by \citet{DEW17} and illustrated in the lower part of Figure~3 in \citep{DEW17}. By Lemma~5.6 in \cite{DEW17}, every separator of $L(Y'_{q,r})$ has size at least $\frac{qr}{2}=\tfrac14 \sqrt{rn} \geq \tfrac18 \sqrt{(k-5)n}$. 
By \cref{RS}, the treewidth of $L(Y'_{p,q,r})$ is at least $\tfrac18 \sqrt{(k-5)n}-1$. As shown above, $L(Y'_{p,q,r})$ is the intersection graph of $n$ segments in the plane, whose intersection graph has maximum degree $k$.

It follows from \cref{Norine} that $L(Y'_{p,q,r})$ has layered treewidth at least  $\tfrac{1}{256} (k-5)$. 
\end{proof}

The next result shows that \cref{LayeredTreewidthString} is asymptotically tight for all $g$ and $k$. The proof is omitted, since it is almost identical to the proof of Theorem~5.7 in \citep{DEW17} (which is equivalent to the second half of 
\cref{MapGraphs}).

\begin{prop} 
\label{MapGraphTreewidthLowerBound}
For all $g\geq 0$ and $k\geq 8$, for infinitely many integers $n$, there is an  $n$-vertex $(g,k)$-string graph with layered treewidth $\Omega(k(g+1))$ and treewidth $\Omega(\sqrt{(k(g+1)n})$.
\end{prop}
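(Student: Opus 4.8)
The plan is to recycle the lower-bound graph behind the second half of \cref{MapGraphs} --- Theorem~5.7 of \citep{DEW17} --- and observe that it is a $(g,k)$-string graph. For $g=0$ this is essentially what the proof of \cref{SegmentIntersectiongraphLowerBound} already does: the extremal graph there is $L(Y'_{q,r})=Z_{q,q,r}$, the line graph of a \emph{planar embedded} graph $Y'_{q,r}$ in which $\deg(v)+\deg(w)=\Theta(r)$ for every edge. The key general fact I would isolate is that the line graph of \emph{any} graph $H$ embedded in a surface $\Sigma$ is a string graph in $\Sigma$: draw each edge of $H$ as the arc it already is, and in a small disc around each vertex $v$ of $H$ resolve the $\deg_H(v)$ incident arcs into $\binom{\deg_H(v)}{2}$ simple crossings, so that no three curves are concurrent and any two arcs sharing an endpoint of $H$ cross exactly once. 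The curve of an edge $vw$ then meets precisely $\deg_H(v)+\deg_H(w)-2$ other curves, and non-adjacent edges of $H$ stay disjoint, so $L(H)$ is a $(g,k)$-string graph whenever $H$ embeds with Euler genus at most $g$ and $\deg_H(v)+\deg_H(w)\le k+2$ for every edge $vw$.

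Concretely, I would take $H=H_{g,q,r}$ to be the genus-$g$ host graph used in \citep{DEW17}: the Euler-genus-$O(g)$ analogue of the ``subdivide every edge $r$ times, then insert one vertex of degree $\Theta(r)$ into each bounded face'' gadget, so that $\deg_H(v)+\deg_H(w)=\Theta(r)$ on every edge, $n:=|E(H)|=\Theta((g+1)q^2r)$, and the $n$-vertex graph $G:=L(H)$ (the genus-$g$ analogue of $Z_{q,q,r}$) has, by Lemma~5.6 of \citep{DEW17} and the genus-$g$ refinement used in the proof of its Theorem~5.7, every separator of order $\Omega((g+1)qr)$. Choosing $r=\Theta(k)$ --- and $k\ge 8$ is exactly what makes these host degrees realizable, matching the hypothesis $d\ge 8$ in \cref{MapGraphs} --- the previous paragraph certifies that $G$ is a $(g,k)$-string graph on $n$ vertices. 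Then \cref{RS} turns the separator bound into $\tw(G)=\Omega((g+1)qr)$, which equals $\Omega(\sqrt{k(g+1)n})$ since $k=\Theta(r)$ and $n=\Theta((g+1)q^2r)$; and if the layered treewidth of $G$ were $o(k(g+1))$ then \cref{Norine} would give $\tw(G)=o(\sqrt{k(g+1)n})$, a contradiction, so the layered treewidth of $G$ is $\Omega(k(g+1))$. By \cref{LayeredTreewidthString} this last bound is best possible.

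The only step genuinely new relative to \citep{DEW17} is the curve realization, so the point I would be careful about is the local picture at each vertex $v$ of $H$: resolving the bundle of incident arcs into simple crossings only, keeping each curve's intersection count equal to $\deg_H(v)+\deg_H(w)-2$, and checking that the handles of $H_{g,q,r}$ add nothing to any edge's degree. The substantively hard part --- which I would import wholesale rather than redo --- is the construction of $H_{g,q,r}$ itself: it must simultaneously embed with Euler genus $O(g)$, keep every edge-degree-sum at $\Theta(r)$, and still admit only separators of order $\Omega((g+1)qr)$, i.e.\ extract a full factor of $g+1$ (not merely $\sqrt{g+1}$) out of the genus. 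That is exactly why one follows the \citep{DEW17} construction rather than a naive stacking or disjoint union of planar copies, and why the proof is omitted in the present paper.
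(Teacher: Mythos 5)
Your proposal is correct and follows essentially the same route as the paper, which omits the proof precisely because it amounts to transplanting the lower-bound construction and separator argument of Theorem~5.7 of \citet{DEW17} (as in the paper's own $g=0$ argument for \cref{SegmentIntersectiongraphLowerBound}), then deducing the treewidth bound via \cref{RS} and the layered-treewidth bound via \cref{Norine}. Your explicit curve realization of the line graph of a surface-embedded graph as a $(g,k)$-string graph is the same standard step the paper relies on implicitly, and your accounting of crossings per curve and of the parameters $r=\Theta(k)$, $n=\Theta((g+1)q^2r)$ matches the intended proof.
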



\cref{Norine,LayeredTreewidthString} imply that every $(g,k)$-string graph on $n$ vertices has treewidth at most $2\sqrt{2(k-1)(2g+3)n}$ and pathwidth $11\sqrt{2(k-1)(2g+3)n}$ (since $(g,k)$-string graphs are a hereditary class). However, this result is qualitatively weaker than the following theorem, which can be concluded from a recent result of \citet{DN14} and a separator theorem for string graphs by \citet{FP08}. See \citep{HW17} for a thorough discussion on the connections between separators and treewidth. 

\begin{thm}[\citep{DN14,FP08}] 
\label{TreewidthCrossings}
For every collection of curves on a surface of Euler genus $g$ with $m$ crossings in total, the corresponding string graph has a separator of order $O(\sqrt{(g+1)m})$ and treewidth $O(\sqrt{(g+1)m})$. 
\end{thm}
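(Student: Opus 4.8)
The plan is to derive both conclusions from a single statement: every string graph of the kind in question, \emph{and every subgraph of it}, has a $\tfrac12$-balanced separator of order $O(\sqrt{(g+1)m})$. This reduction is cheap: an induced subgraph of the string graph $G$ is the string graph of a subfamily of the curves, so it again lies on a surface of Euler genus $g$ and has at most $m$ crossings; and passing to a subgraph by deleting edges only makes a given balanced separator more balanced (the components of $H-S$ refine those of $H^{+}-S$ for $H\subseteq H^{+}$). Hence the separator bound, once proved for string graphs of this type, applies to every subgraph of $G$, and then $\tw(G)=O(\sqrt{(g+1)m})$ follows from the theorem of \citet{DN14} — the quantitative converse to \cref{RS}, stating that if every subgraph of a graph has a balanced separator of order at most $s$ then the graph has treewidth $O(s)$. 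The claimed separator bound for $G$ itself is the special case of $G$.

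To build the separator, let $X$ be a family of curves on a surface $\Sigma$ of Euler genus $g$ with $m$ crossings in total and $G$ its intersection graph. Discard every curve that meets no other curve (an isolated vertex of $G$, harmless), so that each remaining curve carries a crossing and thus the number $n$ of curves satisfies $n\le 2m$. Planarise the arrangement: put a vertex at each crossing and let each arc between consecutive crossings be an edge, then subdivide one arc of each curve with a distinguished \emph{curve-vertex}. The resulting graph $G'$ embeds in $\Sigma$ (route nothing outside the curves), so it has Euler genus at most $g$, and $|V(G')| = n+m \le 3m$. Assign weight $1$ to each curve-vertex and $0$ to each crossing-vertex and apply the weighted separator theorem for graphs of bounded Euler genus (Gilbert, Hutchinson and Tarjan; the plane case $g=0$ of the string-graph statement is also \citet{FP08}): $G'$ has a set $S'$ with $|S'| = O(\sqrt{(g+1)\,|V(G')|}) = O(\sqrt{(g+1)m})$ such that every component of $G'-S'$ contains at most $\tfrac12 n$ curve-vertices.

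Now let $S$ consist of every curve whose curve-vertex lies in $S'$, together with both curves through every crossing-vertex in $S'$; then $|S|\le 2|S'| = O(\sqrt{(g+1)m})$. For a curve $v\notin S$, neither its curve-vertex nor any crossing-vertex on $v$ lies in $S'$, so the subgraph $Q_v$ of $G'-S'$ spanned by the curve-vertex of $v$ and all crossing-vertices on $v$ is connected; and if $u,v\notin S$ are adjacent in $G$, their common crossing-vertex lies in $Q_u\cap Q_v$. Hence each component $C$ of $G-S$ is carried into a single component of $G'-S'$, which then contains the $|C|$ curve-vertices belonging to $C$, forcing $|C|\le\tfrac12 n$. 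Thus $S$ is a $\tfrac12$-balanced separator of $G$ of the required order (reinstating the discarded isolated curves as singleton components changes nothing), and combined with the first paragraph this proves the theorem.

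The content is split between the two external inputs exactly as the statement advertises: \citet{FP08} (extended from the plane to Euler genus $g$ by the planarisation above) supplies the separator theorem, and \citet{DN14} upgrades separators to a treewidth bound. The one place where care is needed — and the likeliest source of a spurious factor — is that in the planarisation step the weight of the separator theorem must be concentrated on the curve-vertices: since $m$ may dwarf $n$, an \emph{unweighted} balanced separator of $G'$ would not control the number of curves on either side of $S$ in $G$.
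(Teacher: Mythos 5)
Your argument is correct, but it follows the cited route (\citet{FP08} plus \citet{DN14}) rather than the proof the paper actually gives. The paper treats \cref{TreewidthCrossings} as a quotable consequence of those two references and then supplies its own alternative proof with explicit constants: \cref{StringStructure} planarises the curve arrangement exactly as you do, but instead of invoking a weighted surface-separator theorem it takes the layered tree decomposition of the genus-$g$ planarisation (\cref{DMW}), replaces each crossing vertex by the two curves through it, and so obtains a tree decomposition and a path decomposition of the string graph that are $2(2g+3)$-orthogonal with the path decomposition of magnitude $2m$; \cref{TreewidthUpperBound} then gives $\tw\leq 4\sqrt{(2g+3)m}-1$ directly, and the separator follows from \cref{RS}, i.e.\ the paper goes treewidth $\Rightarrow$ separator, whereas you go separator $\Rightarrow$ treewidth and therefore need the heavier black box of \citet{DN14} (every subgraph has a small balanced separator implies small treewidth) on top of a \emph{weighted} Gilbert--Hutchinson--Tarjan separator theorem. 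What your route buys is a self-contained expansion of the citation, and your closing remark is the right one: putting the weight on curve-vertices is essential since $m$ can dwarf $n$. What the paper's route buys is explicit constants, no weighted separator machinery, and the structure result \cref{StringStructure}, which is reused elsewhere. Two small points to tidy in your write-up: a curve with exactly one crossing has no arc between consecutive crossings to subdivide, so its curve-vertex should be attached as a pendant vertex on a dangling piece of the curve (this still embeds in the surface); and the standard weighted separator theorems give $\tfrac23$-balance with respect to the weights, so you need the routine iteration (two applications suffice) to meet the paper's $\tfrac12$-balance definition of separator --- harmless inside the $O(\cdot)$, but worth saying.
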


\citet{FP10} conjectured that \cref{TreewidthCrossings} can be improved in the $g=0$ case, with the assumption of ``$m$ crossings'' replaced by ``$m$ crossing pairs''. Equivalently, they conjectured that every string graph with $m$ edges has a $O(\sqrt{m})$ separator. \citet{FP10} proved that every string graph with $m$ edges has a $O(m^{3/4}\sqrt{\log m})$ separator; see \citep{FP14} for related results. The conjecture was almost proved by \citet{Mat14,Mat15}, who showed an upper bound of $O(\sqrt{m}\log m)$. Recently, \citet{Lee16} announced a proof of this $O(\sqrt{m})$ conjecture.


%

We now give an alternative proof of \cref{TreewidthCrossings} with explicit constants. The key is the following structure theorem of interest in its own right. The proof is analogous to that of \cref{LayeredTreewidthString}. 

\begin{lem}
\label{StringStructure}
For every collection of curves on a surface of Euler genus $g$ with $m$ crossings in total  (where no three curves meet at a point), the corresponding string graph has a tree decomposition $T$ and a  path decomposition $P$ such that $|T_x\cap P_y|\leq 2(2g+3)$ for all $x\in V(T)$ and $y\in V(P)$, and $P$ has magnitude $2m$. 
\end{lem}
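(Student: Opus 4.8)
The plan is to mimic the proof of \cref{LayeredTreewidthString}, but using \cref{DMW} directly on a planarisation that records crossings rather than on an auxiliary $k$-string graph. First I would take the given collection $X$ of curves drawn on a surface $\Sigma$ of Euler genus $g$, with $m$ crossings and no three curves through a point, and let $G$ be the corresponding string graph. I build the planarisation $G'$ by placing a vertex at each crossing point; each curve $c$ representing a vertex $v$ of $G$ then becomes a path $P_v$ in $G'$ whose internal vertices are exactly the crossings on $c$. Since $G'$ embeds in $\Sigma$ (with no crossings), \cref{DMW} gives a tree decomposition $T'$ of $G'$ of layered width at most $2g+3$ with respect to some layering $L' = (V'_0,V'_1,\dots)$.

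Next I convert $(T',L')$ into the desired pair for $G$. Exactly as in \cref{LayeredTreewidthString}, replace each occurrence of a crossing-vertex $x$ of $G'$ in a bag of $T'$ by the (at most) two vertices of $G$ whose curves cross at $x$; the same connectivity argument shows the result $T$ is a tree decomposition of $G$, and each bag of $T'$ blows up by a factor of at most $2$, so the relevant intersection quantity is at most $2(2g+3)$. For the path decomposition, I take the layering $L'$ of $G'$ itself (not a coarsened version): define $P_i$ to be the set of vertices $v$ of $G$ such that the path $P_v$ in $G'$ contains a vertex of layer $V'_i$. I must check $P = (P_1, P_2, \dots)$ is a genuine path decomposition of $G$: for each $v$, the set of layers meeting $P_v$ is an interval of consecutive layers (because $P_v$ is connected in $G'$ and $L'$ is a layering), so the bags containing $v$ form a subpath; and for each edge $vw$ of $G$, the curves of $v$ and $w$ cross at some $x$ in some layer $V'_i$, so both $v,w \in P_i$. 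That gives the second condition of a path decomposition in the strong form. Orthogonality $|T_x \cap P_y| \le 2(2g+3)$ holds because $T_x \cap P_y$, after pulling back through the doubling, lives inside one layer of one bag of $T'$.

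The remaining point is the magnitude bound: $\sum_i |P_i| = 2m$. Here I count incidences $(v,i)$ with $v \in P_i$. A vertex $v$ of $G$ lies in $P_i$ iff the path $P_v$ meets layer $V'_i$; since $P_v$ is a path in $G'$ on $1 + (\text{number of crossings on } c_v)$ vertices, and passing from one layer to the next uses up at least one edge of $P_v$, the path $P_v$ meets at most $1 + \deg_{G'}^{\text{internal}}$ ... more carefully: $P_v$ has $d_v$ internal vertices where $d_v$ is the number of crossings on the curve of $v$, hence $d_v + 1$ vertices total, hence meets at most $d_v + 1$ layers. Summing, $\sum_i |P_i| \le \sum_v (d_v + 1)$. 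But this overcounts; I should instead argue that each layer $V'_i$ of $G'$ contributes to $|P_i|$ once per curve passing through it, and $\sum_i |V'_i| = |V(G')| = n + m$ where $n = |V(G)|$ (one vertex per non-crossing point, one per crossing). Since each crossing vertex of $G'$ lies on two curves and each curve-path $P_v$ has its endpoints counted once, a cleaner accounting is: $\sum_i |P_i| \le \sum_v (\text{number of layers met by } P_v)$, and a path on $d_v+1$ vertices met by the layering meets at most $d_v+1$ layers, so $\sum_i|P_i| \le \sum_v (d_v+1) = n + 2m$. To get exactly $2m$ (or at worst $O(n+m)$, which suffices for the applications) I would refine: discard curves with no crossings by placing them into an arbitrary adjacent bag, or simply note $n \le$ (number of curves) and, in the relevant regime $m = \Omega(n)$, $n + 2m = O(m)$; the cleanest route matching the claimed ``$2m$'' is to recall $\sum_v d_v = 2m$ and observe that a curve is counted in $|P_i|$ only for layers strictly between its first and last, inclusive, but each ``new layer entered'' corresponds to a distinct internal vertex of $P_v$, giving the tight count $\sum_i |P_i| \le \sum_v d_v = 2m$ once one checks the endpoints of $P_v$ do not create extra layers beyond those forced by internal vertices. \textbf{Main obstacle:} this magnitude bookkeeping — getting the constant exactly $2m$ rather than $O(n+m)$ — is the only delicate point; everything else is a routine transcription of the proof of \cref{LayeredTreewidthString}.
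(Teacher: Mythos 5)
Your construction follows the paper's proof of \cref{StringStructure} essentially step for step: planarise the arrangement, apply \cref{DMW} to the planarisation $G'$, replace each crossing vertex by the two curves through it to obtain $T$, and read the path decomposition off the layering; the tree-decomposition verification and the orthogonality bound are argued exactly as in the paper. The one substantive deviation is that your planarisation carries extra non-crossing vertices (the endpoints of each curve's path), and this is precisely what creates the ``delicate point'' you could not close. With your definition of $P_i$ (all curves whose path meets layer $V'_i$), the honest count is $\sum_i |P_i| \le \sum_v(\text{number of layers met by } P_v) \le \sum_v (d_v+2) = 2m+2n$ (note a path with $d_v$ internal vertices has $d_v+2$ vertices, not $d_v+1$), and none of your proposed patches repairs this to the stated bound: the claim that ``each new layer entered corresponds to a distinct internal vertex of $P_v$'' is false in general, since an endpoint vertex of $P_v$ may lie in a layer containing no crossing of $v$ (a curve with a single crossing whose endpoints fall into the two adjacent layers is counted in three bags while $d_v=1$); and retreating to $O(n+m)$, or to the regime $m=\Omega(n)$, changes the statement — the lemma asserts magnitude exactly $2m$, which is what \cref{StringTreewidth} uses to obtain the bound $4\sqrt{(2g+3)m}$ with no $n$ term.

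The paper avoids the issue at the source: in its planarisation a curve with $d_v$ crossings becomes a path on exactly $d_v$ vertices (its crossing points and nothing else), so $|V(G')|=m$, and the bag $P_i$ is obtained from the layer $V'_i$ by replacing each crossing vertex by its two curves; the magnitude is then $2|V(G')|=2m$ by definition, and the ``bags containing $v$ are consecutive'' property is still immediate because the crossings along a curve induce a connected path in $G'$. Equivalently, you could keep your planarisation but define $P_i$ using only the crossing vertices of layer $V'_i$, observing that the layers containing crossings of $v$ still form an interval since the only non-crossing vertices of $P_v$ are its two endpoints. Curves with no crossings are isolated vertices of the string graph and can be disregarded (as the paper implicitly does). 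So: same approach as the paper, with one genuine but readily fixable gap in the magnitude accounting caused by the endpoint vertices.
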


\preproof\begin{proof}
Let $X$ be a collection of curves on a surface of Euler genus $g$ with $m$ crossings in total. Let $G$ be the corresponding string graph.  Let $G'$ be the graph obtained from $G$ by replacing each intersection point of two curves in $X$ by a vertex, where each curve crossed by $k$ other curves corresponds to a path on $k$ vertices in $G'$. Thus $G'$ has Euler genus at most $g$. By \cref{DMW}, $G'$ has a tree decomposition $T'$ with layered width at most $2g+3$ with respect to some layering $V'_0,V_1',\dots,V_t'$. 

For each vertex $x$ of $G'$, let $T'_x$ be the subtree of $T$ formed by the bags that contain $x$. 
Let $T$ be the decomposition of $G$ obtained by replacing each occurrence of a vertex $x$ in a bag of $T'$ by the two vertices of $G$ that correspond to the two curves that intersect at $x$. 
We now show that $T$ is a tree decomposition of $G$. For each vertex $v$ of $G$, let $T_v$ be the subtree of $T$ formed by the bags that contain $v$. If $x_1,x_2,\dots,x_k$ is the path in $G'$ representing a vertex $v$ of $G$, then $T_v=T'_{x_1}\cup\dots\cup T'_{x_k}$, which is connected since each $T'_i$ is connected, and $T'_i$ and $T'_{i+1}$ have a node in common (containing $x_i$ and $x_{i+1}$). For each edge $vw$ of $G$, if $x$ is the vertex of $G'$ at the intersection of the curves representing $v$ and $w$, then $T_v$ and $T_w$ have $T'_x$ in common. Thus there is a bag containing both $v$ and $w$. Hence $T$ is a tree decomposition of $G$. 

Construct a weak path decomposition $V_0,\dots,V_t$ as follows. For each vertex $x$ in $V'_i$ corresponding to the crossing point of two curves in $X$ corresponding to two vertices $v$ and $w$ in $G$, add $v$ and $w$ to $V_i$. For each vertex $v$ of $G$, if $(x_1,x_2,\dots,x_k)$ is the path in $G'$ representing $v$, then since $(x_1,x_2,\dots,x_k)$ is connected in $G'$, the set of bags that contain $v$ are consecutive. For each edge $vw$ of $G$, if $x$ is the crossing point between $v$ and $w$, then both $v$ and $w$ are in the bag $V_i$ where $x$ is in $V'_i$. Thus $V_0,\dots,V_t$ is a path decomposition of $G$.

Since each layer in $G'$ contains at most $2g+3$ vertices in a single bag, each of which is replaced by two vertices in $G$, we have 
$|T_x\cap P_y|\leq 2(2g+3)$ for all $x\in V(T)$ and $y\in V(P)$. Observe that $\sum_y |P_y| =2 |V(G')|=2m$. 
\end{proof}

Note that \cref{StringStructure} cannot be strengthened to say that string graphs with $O(n)$ crossings have bounded layered treewidth. For example, the graph obtained by adding a dominant vertex to the line graph of a $\sqrt{n}\times \sqrt{n}$ grid  is a string graph with $O(n)$ crossings, but the layered treewidth is $\Omega(n)$ (since the diameter is 2). This says that we need a path decomposition (rather than a layering) to conclude \cref{StringStructure}. \cref{RS,TreewidthUpperBound,StringStructure} imply:

\begin{thm}
\label{StringTreewidth}
For every collection of curves on a surface of Euler genus $g$ with $m$ crossings in total  (where no three curves meet at a point), the corresponding string graph has treewidth at most $4\sqrt{ (2g+3)m}-1$ and has a separator of order $4\sqrt{ (2g+3)m}$. 
\end{thm}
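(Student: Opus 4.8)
The plan is to assemble the three ingredients already in hand. First I would invoke \cref{StringStructure} with the given collection of curves on the surface of Euler genus $g$: this produces, for the corresponding string graph $G$, a tree decomposition $T$ and a path decomposition $P$ with $|T_x\cap P_y|\leq 2(2g+3)$ for all $x\in V(T)$ and $y\in V(P)$, and with $P$ of magnitude $2m$. Set $k:=2(2g+3)$ and $s:=2m$.

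Next I would observe that an honest path decomposition is in particular a weak path decomposition on the same bags (the edge condition of a path decomposition is stronger than that of a weak path decomposition), and that the magnitude is computed by the same sum $\sum_i|P_i|$ in both settings. Hence $T$ and $P$ satisfy exactly the hypotheses of \cref{TreewidthUpperBound} with the above $k$ and $s$, so
$$\tw(G)\leq 2\sqrt{ks}-1 = 2\sqrt{2(2g+3)\cdot 2m}-1 = 4\sqrt{(2g+3)m}-1.$$
Finally, \cref{RS} gives a separator of $G$ of size at most $\tw(G)+1\leq 4\sqrt{(2g+3)m}$, which is the desired bound.

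\textbf{Main obstacle.} There is essentially no difficulty in this last step beyond bookkeeping: all the real content sits upstream, in \cref{StringStructure} (itself built on \cref{DMW} for graphs embedded in a surface) and in \cref{TreewidthUpperBound}. The only point that needs a moment's verification is the compatibility of the definitions — that a path decomposition of magnitude $2m$ which is $2(2g+3)$-orthogonal to $T$ counts as a weak path decomposition of magnitude $s=2m$ that is $k$-orthogonal to $T$ in the precise sense required by \cref{TreewidthUpperBound} — and tracking the constant $2\sqrt{4(2g+3)m}=4\sqrt{(2g+3)m}$ through the arithmetic.
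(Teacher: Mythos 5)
Your proposal is correct and matches the paper's intended argument exactly: the theorem is stated as an immediate consequence of \cref{StringStructure}, \cref{TreewidthUpperBound} (noting that a path decomposition is a weak path decomposition), and \cref{RS}, with the same constants $k=2(2g+3)$ and $s=2m$ giving $2\sqrt{ks}-1=4\sqrt{(2g+3)m}-1$. Nothing is missing.
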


\section{Crossing Number}
\label{CrossingNumber}

Throughout this section we assume that in a drawing of a graph, no three edges cross at a single point. The \emph{crossing number} of a graph $G$ is the minimum number of crossings in a drawing of $G$ in the plane. See \citep{Schaefer14} for background on crossing numbers. This section shows that graphs with given crossing number have orthogonal decompositions with desirable properties. From this we conclude interesting lower bounds on the crossing number that in a certain sense, improve on known lower bounds. All the results generalise for drawings on arbitrary surfaces. 

\begin{thm}
\label{CrossingsConstruction}
Suppose that some $n$-vertex graph $G$ has a drawing on a surface of Euler genus $g$ with $m$ crossings in total. Then $G$ has a tree decomposition $T$ and a weak path decomposition $P$, such that $T$ and $P$ are $(4g+6)$-orthogonal
and $P$ has magnitude $2m+n$. 
\end{thm}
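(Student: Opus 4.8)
The plan is to follow the proof of \cref{StringStructure} almost verbatim, with the edges of $G$ now playing the role that the curves played there. First I would fix a drawing of $G$ in the surface of Euler genus $g$ with exactly $m$ crossings and form its \emph{planarisation} $G'$: replace each crossing point by a new \emph{dummy} vertex, so that each edge $vw$ of $G$ becomes a path of $G'$ from $v$ to $w$ through the dummy vertices lying on that edge (one per crossing along $vw$), and $G'$ is drawn with no crossings. Then $G'$ has Euler genus at most $g$ and exactly $n+m$ vertices, so \cref{DMW} supplies a tree decomposition $T'$ of $G'$, on a tree $\mathcal T$, of layered width at most $2g+3$ with respect to some layering $V'_0,\dots,V'_t$ of $G'$; thus $|T'_z\cap V'_i|\le 2g+3$ for every node $z$ of $\mathcal T$ and every $i$.

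The one genuinely new ingredient compared with \cref{StringStructure} is that a crossing here lies on two \emph{edges}, hence involves four vertices rather than two, so I cannot simply replace a dummy vertex by ``the two vertices it represents''. To get around this I would orient every edge of $G$ and, for each edge $e$ and each dummy vertex $d$ on the path of $e$ in $G'$, call the tail of $e$ the \emph{$e$-representative} of $d$; since $d$ lies on exactly two edges, it has at most two representatives in all. Then I would define $T$ from $T'$ by replacing, in every bag, each dummy vertex by its (at most two) representatives and keeping the vertices of $V(G)$ untouched, and define the weak path decomposition $P=(P_0,\dots,P_t)$ by letting $P_i$ consist of the vertices of $V(G)$ lying in $V'_i$ together with the representatives of the dummy vertices lying in $V'_i$.

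Next I would run three verifications. (i) $T$ is a tree decomposition of $G$: for $v\in V(G)$ the bags of $T$ containing $v$ are the subtree $T'_v$ together with the subtrees $T'_d$ over the dummy vertices $d$ represented by $v$, and these glue into a single connected subtree because $v$ and the dummy vertices of any one out-edge of $v$ lie on a common path of $G'$; and for an edge $vw$ oriented $v\to w$, a bag of $T'$ containing both $w$ and the last dummy vertex before $w$ on the path from $v$ to $w$ becomes a bag of $T$ containing $\{v,w\}$ (if that path has no dummy vertices, a bag of $T'$ already contains $\{v,w\}$). (ii) $P$ is a weak path decomposition of magnitude $2m+n$: its bags cover $V(G)$; the bags containing a fixed $v$ form an interval, since for each out-edge of $v$ the layers used by $v$ and by the dummy vertices on that edge form an interval of layers, all containing the layer of $v$; the two ends of any edge $vw$ ($v\to w$) occur together in some $P_i\cup P_{i+1}$, because the last dummy vertex before $w$ has $e$-representative $v$ and is adjacent in $G'$ to $w$; and the magnitude is $2m+n$, as each of the $n$ vertices of $G$ is counted once, for the layer containing it, and each of the $m$ dummy vertices contributes its at most two representatives, all to a single layer. (iii) $T$ and $P$ are $(4g+6)$-orthogonal: bounding $|T_x\cap P_i|$ as in \cref{StringStructure}, each of the at most $2g+3$ vertices of $G'$ in $T'_x\cap V'_i$ contributes at most two vertices of $G$ to $T_x\cap P_i$ (an honest vertex of $G$ counted once, or a dummy vertex expanded to at most two representatives), giving $|T_x\cap P_i|\le 2(2g+3)=4g+6$.

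I expect step (iii) to be the main obstacle: the point needing care is that the ``expand each dummy vertex in a bag to its representatives'' operation stays compatible with the layering — one must rule out a vertex entering $T_x\cap P_i$ via a dummy vertex of $T'_x$ in a layer other than $i$, or via a dummy vertex of $V'_i$ lying outside $T'_x$ — and this is precisely the book-keeping already carried out for \cref{StringStructure}, to which the orientation is irrelevant (any fixed orientation of $G$ works). The remaining arguments in (i) and (ii) are the now-standard ``subtrees glue along a path'' and ``layers along a path form an interval'' manipulations, and with (iii) in hand the theorem follows.
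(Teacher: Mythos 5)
Your proposal is essentially the paper's own proof: both planarise the drawing, apply \cref{DMW} to get a tree decomposition and layering of layered width at most $2g+3$ for the planarisation, orient the edges of $G$, and replace each crossing vertex by the tails of the two edges crossing there (your ``$e$-representatives'' are exactly the paper's ``belongs to'' relation), yielding the same verifications of the tree decomposition, the weak path decomposition, the $(4g+6)$-orthogonality, and the magnitude $2m+n$. The counting step you flag as delicate is handled in the paper by precisely the same one-line argument as in \cref{StringStructure}, so your write-up matches the paper's treatment.
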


\preproof\begin{proof}
Orient each edge of $G$ arbitrarily. Let $G'$ be the graph obtained from $G$ by introducing a vertex at each crossing point. So $G'$ has $n+m$ vertices, and has Euler genus at most $g$. For a vertex $z$ of $G'-V(G)$ that corresponds to the crossing point of directed edges $v_1v_2$ and $w_1w_2$ in $G$, we say that $z$ \emph{belongs} to $v_1$ and $w_1$. Each vertex of $G$ \emph{belongs} to itself. 

By \cref{DMW}, $G'$ has  layered treewidth at most $2g+3$. That is, $G'$ has tree decomposition $T'$ and a layering $P'$ such that $|T'_x\cap P'_y| \leq 2g+3$ for each bag $T'_x$ and layer $P'_y$. For each vertex $z$ of $G'$ that belongs to $v_1$ and $w_1$ replace  each occurrence of $z$ in $T'$ and in $P'$ by both $v_1$ and $w_1$. Let $T$ and $P$ be the decompositions of $G$ obtained from $T'$ and $P'$ respectively. 

For each vertex $v$ of $G$ the set of vertices of $G'$ that belong to $v$ form a (connected) star centred at $v$. Thus the set of bags in $P$ that contain $v$ forms a (connected) subpath of $P$. Similarly, the set of bags in $T$ that contain $v$ forms a (connected) subtree of $T$. For each directed edge $vw$ of $G$, if $z$ is the last vertex in $G'$ before $w$ on the path from $v$ to $w$ corresponding to $vw$ (possibly $z=v$), then $zw\in E(G')$ and thus $z$ and $w$ are in a bag of $T'$, which implies that $v$ and $w$ are in a bag in $T$. Similarly, $z$ and $w$ are in a common bag of $P'$ or are in adjacent bags in $P'$, which impies that $v$ and $w$ are in a common bag of $P$ or are in adjacent bags in $P$. 

Hence $T$ is a tree decomposition and $P$ is a weak path decomposition of $G$, such that $|T_x\cap P_y|\leq 4g+6$ for $x\in V(T)$ and $y\in V(P)$. The total number of vertices in $P$ is $2|V(G')\setminus V(G)|+|V(G)|=2((n+m)-n)+n=2m+n$.
\end{proof}

\cref{TreewidthUpperBound,CrossingsConstruction} imply that if $G$ is a graph with a drawing on a surface of Euler genus $g$ with $m$ crossings in total, then 
\begin{align}
\label{twcr}
\tw(G) & \leq  2 \sqrt{ (4g+6)(2m+n)}-1
.
\end{align}
Let $\crossnum(G)$ be the crossing number of a graph $G$ (in the plane). 
Inequality \eqref{twcr} with $g=0$ can be rewritten as the following lower bound on $\crossnum(G)$:
\begin{align}
\label{CRtw} \crossnum(G) +\frac{1}{2}|V(G)| & \geq \frac{1}{48}(\tw(G)+1)^2 .
\end{align}
Of course, \eqref{CRtw}  generalises to the crossing number on any surface. We focus on the planar case since this is of most interest. 
Inequality \eqref{CRtw} is  similar to the following lower bounds on the crossing number in terms of bisection width $\bw(G)$ (due to \citet{PSS-Algo96} and \citet{SV94}) and cutwidth $\cw(G)$ and pathwidth $\pw(G)$ (due to \citet{DV-JGAA03}):
$$ \crossnum(G) +\frac{1}{16}\sum_{v\in V(G)}\deg(v)^2 \geq \frac{1}{40}\bw(G)^2 $$
$$ \crossnum(G) +\frac{1}{16}\sum_{v\in V(G)}\deg(v)^2 \geq \frac{1}{1176}\cw(G)^2 $$
$$\crossnum(G)+\sum_{v\in V(G)}\deg(v)^2 \geq \frac{1}{81} \pw(G)^2 .$$
In one sense, 
inequality \eqref{CRtw} is stronger than these lower bounds, since it replaces a $\sum_v\deg(v)^2$ term by a term linear in $|V(G)|$. On the other hand, $\bw(G)$ and $\cw(G)$ might be much larger than $\tw(G)$. For example, the star graph has treewidth and pathwidth 1, but has linear bisection width and linear cutwidth. And 
$\pw(G)$ might be much larger than $\tw(G)$. For example, the complete binary tree of height $h$ has treewidth 1 and pathwidth $\ceil{h/2}$. 

\section{Two Path Decompositions}
\label{TwoPaths}

This section considers graphs that have two path decompositions with bounded intersections. This property can be interpreted geometrically as follows:

\begin{obs}
A graph $G$ has two $k$-orthogonal path decompositions if and only if $G$ is a subgraph of an intersection graph of axis-aligned rectangles with maximum clique size at most $k$. 
\end{obs}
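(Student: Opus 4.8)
The plan is to unpack the definition of two $k$-orthogonal path decompositions and match it, coordinate by coordinate, with the geometry of axis-aligned rectangles. Suppose $A=(A_1,\dots,A_s)$ and $B=(B_1,\dots,B_t)$ are $k$-orthogonal path decompositions of $G$. For each vertex $v$, the set $\{i : v\in A_i\}$ is an interval $[a_1(v),a_2(v)]\subseteq\{1,\dots,s\}$ by the connectivity axiom, and likewise $\{j : v\in B_j\}$ is an interval $[b_1(v),b_2(v)]\subseteq\{1,\dots,t\}$. Assign to $v$ the axis-aligned rectangle $R_v := [a_1(v),a_2(v)]\times[b_1(v),b_2(v)]$. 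First I would check that this assignment realises $G$ as a subgraph of the intersection graph: if $vw\in E(G)$, then by the edge axiom some bag $A_i$ contains both, so the $x$-intervals of $R_v$ and $R_w$ meet at $i$, and similarly some bag $B_j$ contains both, so the $y$-intervals meet at $j$; hence $R_v\cap R_w\neq\emptyset$. (Non-adjacent vertices may still have intersecting rectangles, which is why we only claim $G$ is a subgraph of the intersection graph, not equal to it.)

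Next I would bound the clique size. A family of axis-aligned rectangles that pairwise intersect has a common point, by the Helly property for boxes (one-dimensional Helly applied to each coordinate, or just the observation that pairwise-intersecting real intervals have a common point). So if $R_{v_1},\dots,R_{v_r}$ pairwise intersect, there is a point $(i,j)$ in all of them, meaning $v_1,\dots,v_r\in A_i\cap B_j$; $k$-orthogonality gives $r\le k$. Thus the intersection graph of $\{R_v\}$ has maximum clique size at most $k$, and since adding or deleting edges of $G$ below the intersection graph only shrinks cliques, we are fine.

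For the converse, suppose $G$ is a subgraph of the intersection graph of a family $\{R_v : v\in V(G)\}$ of axis-aligned rectangles with maximum clique size at most $k$. Write $R_v = I_v\times J_v$ with $I_v,J_v$ closed real intervals. Let $x_1<\dots<x_s$ be the distinct $x$-coordinates of the endpoints of the $I_v$'s, and define $A_i := \{v : x_i\in I_v\}$; symmetrically define $B_j$ from the $J_v$'s. Each $A_i$ and each $B_j$ is a clique in the intersection graph (their intervals all contain $x_i$, resp.\ a common $y$-value, so the rectangles share the point $(x_i,\cdot)$ after also using that... — more carefully, I would instead directly verify the path-decomposition axioms and bound $|A_i\cap B_j|$). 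The connectivity axiom holds because $\{i : x_i\in I_v\}$ is a set of consecutive indices; the edge axiom holds because intersecting intervals $I_v,I_w$ share some endpoint-coordinate $x_i$, so $v,w\in A_i$, and likewise for $B$. Finally $A_i\cap B_j = \{v : x_i\in I_v,\ y_j\in J_v\}$ is the set of vertices whose rectangle contains $(x_i,y_j)$; these rectangles pairwise intersect, so they form a clique of $G$'s supergraph of size $|A_i\cap B_j|$, hence $|A_i\cap B_j|\le k$. This gives the two required $k$-orthogonal path decompositions.

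The only genuinely substantive point — and the one I would make sure to state cleanly — is the Helly property for axis-aligned boxes, used in the forward direction to get a common point from pairwise intersections; everything else is bookkeeping about intervals and consecutive indices. A minor technical care: one should allow empty bags or equivalently note that a path decomposition may be padded so that both decompositions have whatever length is convenient, which is harmless.
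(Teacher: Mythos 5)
Your proof is correct, and since the paper states this observation without any proof, your argument is precisely the intended one: the interval structure of the two path decompositions in each coordinate gives the rectangles, the Helly property for axis-aligned boxes (which the paper records separately for its later arguments) turns a pairwise-intersecting family into a common bag-pair of size at most $k$, and the sweep over endpoint coordinates gives the converse. The only points deserving one extra word are cosmetic: the paper's rectangles are open, so the closed integer boxes $[a_1(v),a_2(v)]\times[b_1(v),b_2(v)]$ (possibly degenerate) should be inflated by $\tfrac{1}{2}$ in each direction, which changes no intersections, and the common point produced by Helly may be taken at integer coordinates because all interval endpoints are integers --- both trivial to supply.
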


Of course, every bipartite graph is a subgraph of an intersection graph of axis-aligned lines with at most two lines at a single point. So every bipartite graph has two 2-orthogonal path decompositions.  This is essentially a restatement of the construction for $K_{n,n}$ in \cref{Introduction}. 

The following result of \citet{BDDEW} is relevant. 

\begin{thm}[\cite{BDDEW}] 
Every tree has layered pathwidth $1$ and every outerplanar graph has layered pathwidth at most 2. 
\end{thm}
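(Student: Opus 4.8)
The statement to be proved is the theorem of \citet{BDDEW}: every tree has layered pathwidth $1$, and every outerplanar graph has layered pathwidth at most $2$. The plan is to establish each part by directly exhibiting a path decomposition together with a compatible layering. For the tree case, the natural object is a BFS-style argument: root the tree $T$ at a vertex $r$, and take the layering $(V_0,V_1,\dots)$ where $V_i$ is the set of vertices at distance exactly $i$ from $r$; this is a valid layering since every edge of a tree joins consecutive levels. The work is to order the vertices within the layers so that the resulting sequence of bags forms a genuine path decomposition with at most one vertex of each layer per bag. The key idea is to process the tree in a left-to-right DFS order, but laid out so that each vertex $v$ appears in a contiguous run of bags spanning the interval from just before its first child is processed to just after its last child is processed; siblings are handled consecutively, and because $T$ is a tree (no cross edges), the interval of bags where $v$ lives overlaps exactly the intervals of its parent and its children. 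One then checks (i) connectivity of each vertex's bag set, (ii) every edge $vw$ appears in a common bag — true because $w$'s interval is nested inside $v$'s — and (iii) within each bag, at most one vertex from each layer is present, which follows from the DFS discipline since at any moment the "active" vertices at a fixed depth form a single path-to-root witness.

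For the outerplanar case, the plan is to reduce to the tree case using the standard structural fact that a $2$-connected outerplanar graph has a Hamiltonian cycle (its outer face), and more generally that an outerplanar graph is a subgraph of a graph obtained from a tree by a controlled thickening. Concretely: first handle $2$-connected outerplanar graphs by taking the weak dual tree $T^*$ (whose nodes are the inner faces, adjacent when they share an edge), which is a tree, and using a path decomposition of $T^*$ to induce one of $G$; the layering comes from the distance layering restricted to the outer cycle, where consecutive chords of the outer face link layers at most one apart after a suitable choice of root on the cycle. The extra factor of $2$ (layered pathwidth $2$ rather than $1$) arises precisely because each bag may need to contain two vertices of a layer: the two endpoints of a "vertical" chord. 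For general (not necessarily $2$-connected) outerplanar graphs, decompose into blocks along the block-cut tree and glue the block path decompositions together at cut vertices, taking care that the layerings of adjacent blocks can be aligned so a cut vertex keeps a consistent layer index; this gluing is routine because a cut vertex lies in a single layer.

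The main obstacle I expect is the bookkeeping in the outerplanar case: producing a single layering of the \emph{whole} graph (not block-by-block) that simultaneously (a) places every edge between consecutive layers, (b) is compatible with a path decomposition in which each bag meets each layer in at most two vertices, and (c) survives the gluing at cut vertices without the layer indices of different blocks drifting out of sync. One clean way to control this is to work with a plane embedding fixed from the start, root everything at a single outer-face vertex $v_0$, and define the layering by distance from $v_0$ \emph{within the outerplanar structure}; then argue that any chord or block-joining edge spans at most one layer by an extremal-path argument on the outer walk. Getting the constant exactly $2$, and not $3$, requires noticing that at each "event" in the left-to-right sweep only one new same-layer vertex can be forced into a bag at a time, so the running intersection with any layer never exceeds two. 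The tree part, by contrast, should be essentially immediate once the DFS layout is written down carefully.
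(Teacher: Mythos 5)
First, note that the paper you are working from does not actually prove this statement: it is quoted from \citet{BDDEW} and used as a black box, so there is no in-paper proof to compare against; your proposal has to stand on its own as a reconstruction. The tree half of your proposal does stand: rooting $T$, layering by distance from the root, and letting the $i$-th bag be the set of ``active'' vertices in a DFS (equivalently, the root-to-$v_i$ path in preorder) gives contiguous bag-intervals per vertex, puts every parent--child edge in a bag, and has exactly one vertex per layer per bag. That is essentially the standard argument and is fine.

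The outerplanar half, however, has a genuine gap. Two issues. (1) Your layering is wrong as stated: ``the distance layering restricted to the outer cycle'' with ``a suitable choice of root on the cycle'' does not make every edge span at most one layer --- in a fan ($r$ joined to all vertices of a path $v_1\dots v_n$ on the outer face) the chord $rv_{n/2}$ joins vertices whose cycle-distances from any root differ by far more than $1$. The layering has to be graph distance in $G$ itself from a fixed root (your fallback phrase ``within the outerplanar structure'' is ambiguous on exactly this point); with true BFS layers the ``every edge spans at most one layer'' condition is automatic and no extremal-path argument is needed. (2) More seriously, the heart of the theorem --- that the bags of your path decomposition meet each layer in at most two vertices --- is asserted, not proved. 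Your construction ``use a path decomposition of the weak dual tree $T^*$ to induce one of $G$'' is not pinned down, and its most natural instantiation (bags $=$ unions of the faces along root-to-node paths in $T^*$, mirroring your tree argument) fails: in the fan above, with layering by distance from $r$, layer $1$ contains all of $v_1,\dots,v_n$, and such a bag contains arbitrarily many layer-$1$ vertices. So the claim that ``at each event only one new same-layer vertex can be forced into a bag, so the running intersection with any layer never exceeds two'' is exactly the crux, and nothing in the sketch establishes it. A correct proof needs a specific choice of bags --- e.g.\ bags built from at most two root-to-boundary paths (one vertex per layer each) swept around the outer face, together with a verification that every chord ends up inside some bag and that each vertex's bag set is an interval --- and none of that bookkeeping is present. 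The block-gluing worry, by contrast, is a non-issue once the layering is global BFS distance from one root.
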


This result implies that every tree has two 2-orthogonal path decompositions, and every outerplanar graph has two 4-orthogonal path decompositions. (The sequence of consecutive pairs of layers defines the second path decomposition, as described in \cref{Extremal}.)\  After trees and  outerplanar graphs the next simplest class of graphs to consider are series parallel graphs, which are the graphs with treewidth 2, or equivalently those containing no $K_4$ minor. Every outerplanar graph is series parallel. However, we now prove that series parallel graphs behave very differently compared to trees and outerplanar graphs. 

\begin{thm}
\label{SeriesParallel}
There is no constant $c$ such that every series parallel graph has two $c$-orthogonal path decompositions.
\end{thm}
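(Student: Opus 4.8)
The plan is to exhibit, for every constant $c$, a series parallel graph $G$ that admits no two $c$-orthogonal path decompositions. A natural candidate is a suitably large \emph{balanced binary tree of parallel bundles}: take the complete binary tree $B_h$ of height $h$ and replace each edge by $c+1$ internally disjoint paths of length two (a ``theta-like'' gadget), or more simply iterate the series-parallel operations to build a graph that locally looks like many disjoint copies of a fixed obstruction. The point is that series parallel graphs, unlike outerplanar graphs, can contain arbitrarily large ``grid-like'' substructures once we are allowed to stack parallel compositions, and such substructures force large intersections between any two path decompositions.

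First I would recall the geometric reformulation from the preceding \brm{obs}: two $k$-orthogonal path decompositions of $G$ exist if and only if $G$ is a subgraph of an intersection graph of axis-aligned rectangles with clique number at most $k$. So it suffices to show that for every $c$ there is a series parallel graph that is not a subgraph of any axis-aligned rectangle intersection graph with clique number $\le c$; equivalently, that series parallel graphs have unbounded ``rectangle boxicity-with-clique-constraint''. Second, I would fix a concrete series parallel graph and argue by a counting/pigeonhole obstruction: in any pair of path decompositions $A$ (horizontal) and $B$ (vertical), each vertex $v$ gets an interval $I_A(v)$ of positions in $A$ and an interval $I_B(v)$ of positions in $B$, and adjacency of $v,w$ forces both $I_A(v)\cap I_A(w)\neq\emptyset$ and $I_B(v)\cap I_B(w)\neq\emptyset$ (since $v,w$ lie in a common bag of each). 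Thus $v\mapsto I_A(v)\times I_B(v)$ is a representation of $G$ as a subgraph of a rectangle intersection graph, and $c$-orthogonality means no point of the plane is covered by more than $c$ of these rectangles.

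Third — and this is the heart of the argument — I would show that a large enough series parallel graph cannot be so represented. The clean way is to build $G$ recursively so that it has a vertex $s$ and a vertex $t$ with the property that $G$ is the parallel composition of $N$ series-parallel graphs $G_1,\dots,G_N$ between $s$ and $t$, each $G_i$ containing a long path from $s$ to $t$; then $s$ and $t$ together with the $N$ internal ``near-$s$'' and ``near-$t$'' neighbours behave like the two sides of a large bipartite-like structure, and since each rectangle $R(s)$, $R(t)$ is fixed while $N$ interior rectangles must each meet both $R(s)$ and $R(t)$ and route ``around'' each other, a Helly-type / planarity-of-axis-parallel-boxes argument forces some point to lie in $\Omega(\log N)$ or even $\Omega(\sqrt N)$ of the interior rectangles. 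Iterating the construction across the levels of the binary tree amplifies this to an unbounded bound as $h\to\infty$. Concretely I would track a ``width'' potential down the recursion and show it must grow, contradicting a fixed $c$.

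The main obstacle I expect is making the geometric/combinatorial forcing step rigorous: axis-aligned rectangle intersection graphs are much richer than interval graphs, so one cannot simply invoke Helly's theorem on intervals. The right tool is probably to project onto one axis and observe that on that axis we still have an interval representation of $G$ in which every point is covered $\le c$ times, hence that axis alone already yields a path-decomposition-like structure with bounded ``depth'' — and then argue that series parallel graphs with a large parallel composition cannot have such bounded-depth interval structure on both axes simultaneously, because the two ``orderings'' induced on the $N$ parallel branches must be compatible in a way that only $O(c)$ branches allow. If that direct approach is too delicate, the fallback is to use \cref{LinearSize}-style magnitude bookkeeping together with an explicit series parallel graph whose every subgraph is ``dense relative to its separators'' in a way incompatible with bounded-intersection path decompositions; but I believe the rectangle-intersection reformulation gives the cleanest route, and the key lemma to isolate and prove carefully is the $\Omega(\log N)$ (or better) lower bound on the maximum overlap of $N$ rectangles each joining two fixed disjoint rectangles while realising a prescribed ``nesting-free'' pattern.
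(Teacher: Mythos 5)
Your first step (the reformulation as subgraphs of axis-aligned rectangle intersection graphs with bounded clique number) is exactly the paper's starting point, but the heart of your argument has a genuine gap, and in two places the sketch rests on claims that are false as stated. First, the candidate graph does not work: a complete binary tree in which each edge is replaced by $c+1$ internally disjoint paths of length two admits a rectangle representation with clique number at most $3$, \emph{independent of $c$ and $h$}. Represent the tree so that each parent rectangle and child rectangle cross (an H-tree style layout, giving clique number $2$ for the tree), and for each tree edge place the $c+1$ subdivision vertices as thin pairwise disjoint slivers inside the crossing region $R(u)\cap R(v)$; each sliver meets both $R(u)$ and $R(v)$, and the only cliques created have size $3$. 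The same picture refutes your proposed key lemma: $N$ rectangles each joining two fixed disjoint rectangles can be taken pairwise disjoint (this is just the $K_{2,N}$ representation you yourself recalled), so no point need be covered by more than $2$ or $3$ of them, and no $\Omega(\log N)$ overlap is forced. The hedge about a ``prescribed nesting-free pattern'' is where the whole difficulty lives, and it is never specified, let alone shown to be forced by some series parallel graph; large parallel compositions between two terminals simply do not create large cliques in rectangle representations.

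For comparison, the paper's proof has to work much harder precisely because no fixed small gadget forces large cliques. It uses the \emph{universal} $2$-tree $T_{h,d}$, in which every edge created at every level receives $d$ new common neighbours, so the host graph offers exponentially many alternative continuations at every step; the argument is then adaptive rather than static. Given any rectangle representation, one grows an hvo-alternating sequence of rectangles (\cref{hvo-alternating}) along a path in $T_{h,d}$: at each level either some child extends the alternation, or all $4k-7$ children contain corners of the running intersection (pigeonhole gives a $k$-clique), or the process stalls repeatedly, producing a nesting sequence that again yields a heavily covered point (\cref{NestingLemma}); the height $(k-1)^2$ is chosen to outlast all possible stalls (\cref{path-times-path}). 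If you want to salvage your outline, the missing ingredient is exactly this kind of universality plus an adaptive clique-finding procedure; a single hard-wired series parallel gadget, chosen before seeing the representation, will not do.
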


The edge-maximal series parallel graphs are precisely the 2-trees, which are defined recursively as follows. $K_2$ is a 2-tree, and if $vw$ is an edge of a 2-tree $G$, then the graph obtained from $G$ by adding a new vertex adjacent only to $v$ and $w$ is also a 2-tree. To prove the above theorem, we show in \cref{path-times-path} below that for every integer $k$ there is a 2-tree graph $G$ such that every intersection graph of axis-aligned rectangles that contains $G$ as a subgraph also contains a $k$-clique.

Throughout this paper, the word \emph{rectangle} means open axis-aligned
rectangle: a subset $R$ of $\mathbb{R}^2$ of the form $(x_1,x_2)\times (y_1,y_2)$,
where $x_1<x_2$ and $y_1<y_2$.  The rectangle $R$ has four \emph{corners}
$(x_1,y_1)$, $(x_1,y_2)$, $(x_2,y_1)$ and $(x_2,y_2)$. And $R$ has four \emph{sides} (closed
vertical or horizontal line segments whose endpoints are corners) called 
the \emph{left}, \emph{top}, \emph{right} and \emph{bottom} sides of $R$ in the obvious way.
A \emph{rectangle intersection graph} is a graph whose vertices are
rectangles and the edge between two rectangles $u$ and $w$ is present
if and only $u\cap w\neq \emptyset$.  The boundary of $R$---the union
of its four sides---is denoted by $\partial R$.

We  make use of the fact that the set of rectangles in the plane is
a Helly family (of order 2) \cite[Chapter~11]{bollobas:combinatorics}:
\begin{obs}\label{helly}
   If $u$, $v$, and $w$ are rectangles that pairwise intersect, then
   $u\cap v\cap w\neq\emptyset$.
\end{obs}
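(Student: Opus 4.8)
The plan is to reduce the two-dimensional statement to the one-dimensional Helly property for intervals. Write each of the three rectangles as a product of open intervals along the two coordinate axes: say $u=I_u\times J_u$, $v=I_v\times J_v$, $w=I_w\times J_w$, where each $I$ is an open interval on the $x$-axis and each $J$ an open interval on the $y$-axis. Projecting the hypothesis onto the $x$-axis, $u\cap v\neq\emptyset$ forces $I_u\cap I_v\neq\emptyset$, and similarly $I_u\cap I_w\neq\emptyset$ and $I_v\cap I_w\neq\emptyset$; the same holds for the intervals $J_u,J_v,J_w$ on the $y$-axis.

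First I would record the one-dimensional fact: any three pairwise-intersecting intervals of $\mathbb{R}$ have a common point. To see this, let $a$ be the largest among the three left endpoints and $b$ the smallest among the three right endpoints. If the same interval achieves both, then $a<b$ trivially; otherwise $a$ and $b$ are the left and right endpoints of two distinct intervals, and their intersection being nonempty gives $a<b$ again. In either case $(a,b)$ is a nonempty open interval contained in all three intervals. Applying this to $I_u,I_v,I_w$ yields a point $x_0\in I_u\cap I_v\cap I_w$, and applying it to $J_u,J_v,J_w$ yields a point $y_0\in J_u\cap J_v\cap J_w$. Then $(x_0,y_0)$ lies in $u\cap v\cap w$, so that set is nonempty.

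There is no real obstacle here; the only point requiring a moment's care is that the rectangles (hence the projected intervals) are open, so one should confirm the inequality $a<b$ is strict, which it is, as above. The argument uses nothing beyond the product structure of axis-aligned rectangles and the elementary interval computation.
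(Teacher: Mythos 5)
Your proof is correct and follows essentially the same route as the paper, which simply notes that the claim follows from Helly's theorem for real intervals together with the fact that an axis-aligned rectangle is a Cartesian product of two intervals. The only difference is that you spell out the elementary three-interval case (largest left endpoint versus smallest right endpoint) instead of citing interval Helly, which is a fine, self-contained substitute.
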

\cref{helly} follows from Helly's Theorem for real intervals
and the observation that a rectangle is the Cartesian product of
two real intervals (see \cite[Page~83]{bollobas:combinatorics} or
\cite{bollobas.duchet:helly}).

Let $v$ and $w$ be two rectangles with $R=v\cap w\neq \emptyset$ and
such that $w$ does not contain any corner of $v$.  We say that $(v,w)$
is an \emph{h-pair} if the left or right side of $R$ is contained in
$\partial v$.  We say that $(v,w)$ is a \emph{v-pair} if the top or bottom
side of $R$ is contained in $\partial v$.  If $(v,w)$ is not an h-pair
or a v-pair, then we call it an \emph{o-pair}.  Note that, since $w$
does not contain a corner of $v$, $(v,w)$ is exactly one of a v-pair,
an h-pair or an o-pair. See \cref{hvo}.

\begin{figure}
   \begin{tabular}{ccc}
   \includegraphics{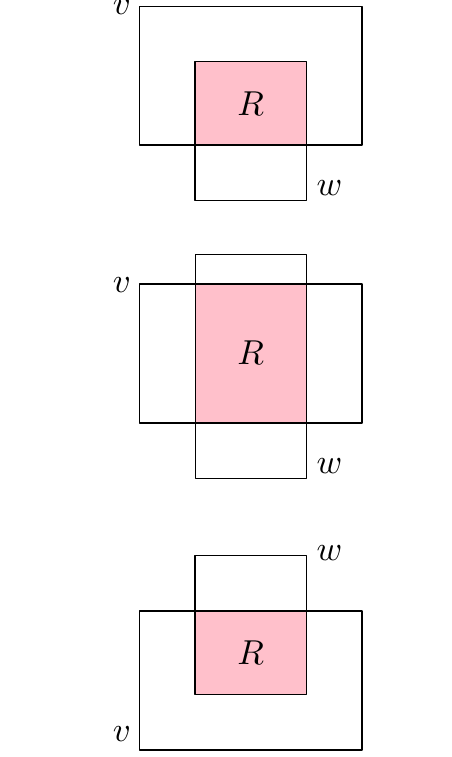} & 
   \includegraphics{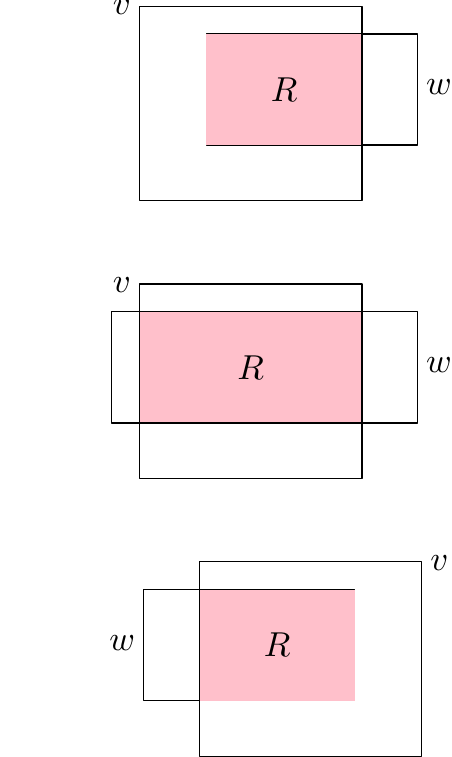} & 
   \includegraphics{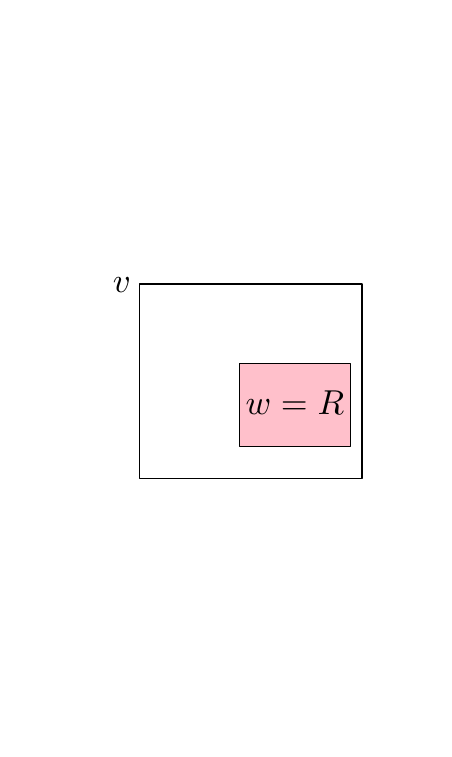} \\
   v-pairs & h-pairs & o-pair
   \end{tabular}
   \caption{\label{hvo}Examples of v-pairs, h-pairs, and an o-pair.}
\end{figure}

Our proof works by finding a path in a rectangle intersection graph $G$
that defines a sequence of rectangles having properties that ensure
that these rectangles form a clique. 

See \cref{good-sequence} for an illustration of the following definition:
Let $v_1,\ldots,v_k$ be a sequence of rectangles and let
$R_i=\bigcap_{j=1}^i v_j$.  We say that $v_1,\ldots,v_k$ is
\emph{hvo-alternating} if
\begin{enumerate}[topsep=0pt,itemsep=0pt]
  \item for each $i\in\{2,\ldots,k\}$, $v_i\cap R_{i-1}\neq \emptyset$;
  \item for each $i\in\{2,\ldots,k\}$,
        $v_i$ does not contain any corner of $R_{i-1}$; and
  \item for each $i\in\{2,\ldots,k-1\}$, $(R_{i-1},v_i)$ and
        $(R_i,v_{i+1})$ are not both h-pairs and not both v-pairs.
\end{enumerate}

\begin{figure}
  \begin{center}
    \includegraphics{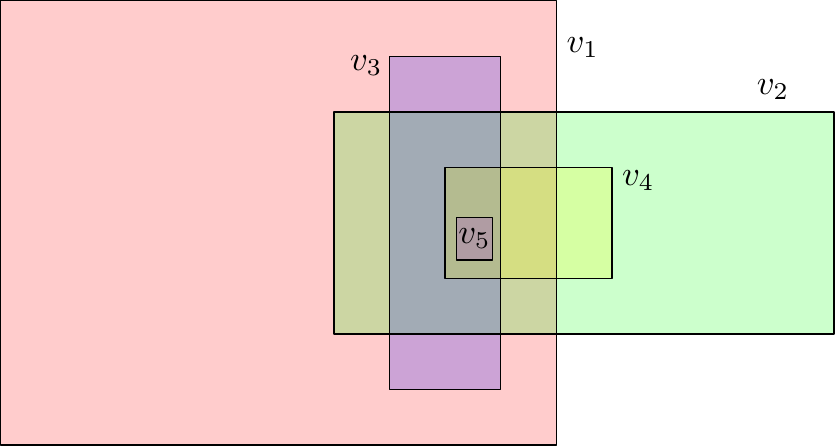}
  \end{center}
  \caption{\label{good-sequence}An hvo-alternating sequence of rectangles.}
\end{figure}

Note that Property~1 with $i=k$ ensures that $\bigcap_{j=1}^k v_i\neq\emptyset$.  Therefore, if $v_1,\ldots,v_k$ are
vertices in a rectangle intersection graph $G$, then these vertices form a
$k$-clique in $G$.  Our proof attempts to grow an hvo-alternating sequence
of vertices in $G$. The following lemma shows that an hvo-alternating
sequence is neatly summarized by its last two elements:

\begin{lem}
\label{hvo-alternating}
  If $v_1,\ldots,v_k$ is an hvo-alternating sequence of
  rectangles then $\bigcap_{i=1}^{k} v_i = v_{k-1}\cap v_k$.
\end{lem}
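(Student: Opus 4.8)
The plan is to prove the identity $\bigcap_{i=1}^{k} v_i = v_{k-1}\cap v_k$ by induction on $k$, where the real content is an induction showing that the partial intersections $R_i=\bigcap_{j=1}^i v_j$ do not shrink once we are three or more steps in. More precisely, I would prove the stronger statement that for an hvo-alternating sequence $v_1,\dots,v_k$ one has $R_i = v_{i-1}\cap v_i$ for every $i\in\{2,\dots,k\}$; the lemma is then the case $i=k$. The base case $i=2$ is immediate since $R_2 = v_1\cap v_2$ by definition.

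For the inductive step, suppose $R_{i-1}=v_{i-2}\cap v_{i-1}$ (or, if $i=3$, $R_2 = v_1\cap v_2$) and we want $R_i = R_{i-1}\cap v_i = v_{i-1}\cap v_i$. The inclusion $R_i\subseteq v_{i-1}\cap v_i$ is trivial. For the reverse inclusion $v_{i-1}\cap v_i\subseteq R_{i-1}$, the key geometric fact to establish is: if $v_i$ meets $R_{i-1}$ but contains no corner of $R_{i-1}$ (Properties~1 and~2 of hvo-alternating), then the portion of $v_i$ lying outside $R_{i-1}$ is, roughly, cut off by a single side of $R_{i-1}$, and $v_{i-1}$ (being the ``earlier'' rectangle that produced the side of $R_{i-1}$ relevant here) cannot reach across that side. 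I would make this precise using the h-pair/v-pair/o-pair classification: since $v_i$ contains no corner of $R_{i-1}$, the pair $(R_{i-1},v_i)$ is exactly one of an h-pair, a v-pair, or an o-pair. In the o-pair case one checks directly from the definitions that $v_i\cap R_{i-1}$ is actually a neighbourhood of a corner of $v_i$ chopped by two sides of $R_{i-1}$, forcing $R_i = v_i \cap R_{i-1} = v_i \cap v_{i-1}$ because $R_{i-1}$'s relevant corner came from $v_{i-1}$. In the h-pair (resp.\ v-pair) case, $R_i$ is obtained from $R_{i-1}$ by slicing with one vertical (resp.\ horizontal) line, so $R_i$ is still bounded on the other pair of sides exactly as $R_{i-1}$ was, i.e.\ by $v_{i-1}$ together with the new rectangle $v_i$; here I would use the inductive description $R_{i-1}=v_{i-2}\cap v_{i-1}$ together with Property~3 — which says $(R_{i-2},v_{i-1})$ and $(R_{i-1},v_i)$ are not both h-pairs and not both v-pairs — to conclude that the side of $R_{i-1}$ contributed by $v_{i-2}$ is not the side being sliced by $v_i$, so $v_{i-2}$ drops out and $R_i = v_{i-1}\cap v_i$ as claimed.

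The main obstacle will be pinning down the plane-geometry bookkeeping in that last case: one must track which of the four sides of the shrinking rectangle $R_j$ is ``owned'' by which $v_m$, argue that an h-pair step only affects the left/right pair of sides and a v-pair step only the top/bottom pair, and then use Property~3 to guarantee consecutive steps alternate between these two roles so that the contribution of $v_{i-2}$ is always superseded. I would set this up by maintaining, as part of the inductive hypothesis, not just the equation $R_i=v_{i-1}\cap v_i$ but also a record of which sides of $R_i$ lie on $\partial v_{i-1}$ versus $\partial v_i$, and then check the three cases (o, h, v) for the next rectangle against this record. Helly's property for rectangles (Observation~\ref{helly}) is what makes the intersections behave well (each $R_i$ is a genuine nonempty rectangle), and Property~1 with $i=k$ then gives $\bigcap_{j=1}^k v_j\neq\emptyset$ as already noted. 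Once the side-tracking invariant is in place the verification in each case is a short check, so the real work is choosing the right invariant to carry through the induction.
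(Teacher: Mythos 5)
Your overall plan---strengthening the statement to $R_i=v_{i-1}\cap v_i$, inducting, and splitting on the h/v/o type of the new pair while invoking Property~3---has the same skeleton as the paper's proof (the paper proves the three-rectangle case and then simply observes that $R_{k-2},v_{k-1},v_k$ is itself hvo-alternating, so the general case follows at once and no side-tracking invariant is ever needed). But the geometric case analysis you describe is incorrect in both cases, and the decisive verification is exactly the part you defer. In the o-pair case: if $(R_{i-1},v_i)$ is an o-pair then by definition no side of $R_i=R_{i-1}\cap v_i$ lies on $\partial R_{i-1}$, so every side of $R_i$ lies on $\partial v_i$, which means $v_i\subseteq R_{i-1}$. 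The picture you describe ($v_i\cap R_{i-1}$ being a corner neighbourhood of $v_i$ cut off by two sides of $R_{i-1}$) is not an o-pair at all: it would make the pair simultaneously an h-pair and a v-pair and would force $v_i$ to contain a corner of $R_{i-1}$, contradicting Property~2. Once $v_i\subseteq R_{i-1}\subseteq v_{i-1}$ is in hand, the conclusion $R_i=v_i=v_{i-1}\cap v_i$ is immediate, and no reasoning about which rectangle ``owns'' a corner is needed or valid.

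In the h-pair case the description is also wrong: when $(R_{i-1},v_i)$ is an h-pair, the pair is not a v-pair and $v_i$ contains no corner of $R_{i-1}$, so the vertical extent of $v_i$ lies strictly inside that of $R_{i-1}$ and the top and bottom sides of $R_i$ come from $v_i$; thus $R_i$ is \emph{not} ``bounded on the other pair of sides exactly as $R_{i-1}$ was.'' Moreover, Property~3 concerns the pairs $(R_{i-2},v_{i-1})$ and $(R_{i-1},v_i)$, not the individual rectangle $v_{i-2}$, so ``which side is contributed by $v_{i-2}$'' is not the right dichotomy. The correct argument runs through $R_{i-2}$ as a single rectangle: if $(R_{i-2},v_{i-1})$ is an o-pair then $v_{i-1}\subseteq R_{i-2}$ and we are done; if it is a v-pair then the horizontal extent of $v_{i-1}$ lies inside that of $R_{i-2}$, while the vertical extent of $v_i$ lies inside that of $R_{i-1}\subseteq R_{i-2}$, whence $v_{i-1}\cap v_i\subseteq R_{i-2}$, so $v_{i-1}\cap v_i\subseteq R_{i-2}\cap v_{i-1}\cap v_i=R_i$ and equality holds. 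This two-coordinate step (packaged in the paper as the auxiliary rectangle $R^*$ in its $k=3$ case) is the entire content of the lemma; as written, your proposal misstates the geometry in both cases and postpones precisely this check to an unspecified ``side-tracking invariant,'' so there is a genuine gap.
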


\preproof\begin{proof}
  The case $k=2$ is trivial, so we first consider the case $k=3$.
  Recall that, for any two sets $A$ and $B$, $A\supseteq B$ if and only
  if $A\cap B = B$. Therefore, it is sufficient to show that $v_1\supseteq
  v_2\cap v_3$.

  If $(v_1,v_2)$ is an o-pair, then $v_1\supseteq v_2\supseteq v_2\cap
  v_3$ and we are done.  Otherwise, assume without loss of generality
  that $(R_1,v_2)=(v_1,v_2)$ is an h-pair, so that $(R_2,v_3)$ is not an
  h-pair. Refer to \cref{geometric}.  In this case, $v_2\cap v_3$ is
  contained in the rectangle $R^*$ whose top and bottom sides coincide
  with those of $v_2$ and whose left and right sides coincide with
  those of $v_1$. We finish by Observing that $v_1\supseteq R^*\supseteq
  v_2\cap v_3$, as required.

  \begin{figure}
    \begin{center}
      \includegraphics{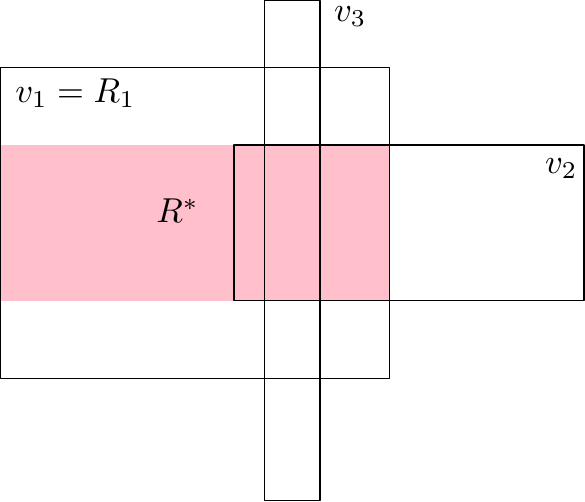}
    \end{center}
    \caption{\label{geometric}The proof of \cref{hvo-alternating}.}
  \end{figure}

  Next consider the general case $k> 3$.
  By \cref{hvo-alternating}, the three element sequence
  $R_{k-2},v_{k-1},v_k$ is an hvo-alternating sequence so, applying 
  the result for $k=3$, we obtain
  \[  
   \bigcap_{j=1}^k v_j = R_{k-2}\cap v_{k-1}\cap v_k = v_{k-1}\cap v_k
   \enspace .  \qedhere 
  \]
\end{proof}

Sometimes the process of growing our hvo-alternating sequence stalls. The
following lemma shows that, when this process stalls, we can at least
replace the last element in the sequence.

\begin{lem}
\label{replacement}
  Let $v_1,\ldots,v_k$ be an hvo-alternating sequence of rectangles and
  define $R_i=\bigcap_{j=1}^i v_i$. Let $v$ be a rectangle such
  that 
  \begin{enumerate}[itemsep=0ex,topsep=0ex] 
     \item $v\cap R_k\ne \emptyset$; 
     \item $v$ contains no corner of $R_k$; and 
     \item $v_1,\ldots,v_k,v$ is not hvo-alternating.
  \end{enumerate}
  Then $v_1,\ldots,v_{k-1},v$ is hvo-alternating.
\end{lem}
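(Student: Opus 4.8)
\textbf{Proof proposal for \cref{replacement}.}

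The plan is to analyze why $v_1,\ldots,v_k,v$ fails to be hvo-alternating, and use that failure to verify the three hvo-alternating conditions for $v_1,\ldots,v_{k-1},v$. By hypotheses~(1) and~(2), the pair $(R_k,v)$ satisfies the first two conditions of hvo-alternating (Properties~1 and~2 of the definition, with the new element in position $k+1$). Since $v_1,\ldots,v_k$ is already hvo-alternating, the only way $v_1,\ldots,v_k,v$ can fail to be hvo-alternating is that Property~3 fails at $i=k$; that is, $(R_{k-1},v_k)$ and $(R_k,v)$ are both h-pairs or both v-pairs. Without loss of generality assume both are h-pairs (the v-pair case is symmetric, swapping the roles of horizontal and vertical).

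Next I would compare the geometry of $v\cap R_k$ with $v\cap R_{k-1}$. The key observation, exactly as in the $k=3$ analysis inside the proof of \cref{hvo-alternating}, is that because $(R_{k-1},v_k)$ is an h-pair, $R_k = R_{k-1}\cap v_k$ has the same left and right sides as some reference rectangle built from $R_{k-1}$, and in particular $v\cap R_k$ and $v\cap R_{k-1}$ have the same horizontal extent; more precisely $R_k\subseteq R_{k-1}$ with the top and bottom sides of $R_k$ coming from $v_k$ and the left/right sides from $R_{k-1}$. Using this I would check: (i) $v\cap R_{k-1}\neq\emptyset$, which follows since $v\cap R_k\neq\emptyset$ and $R_k\subseteq R_{k-1}$, giving Property~1 for the new sequence; (ii) $v$ contains no corner of $R_{k-1}$ — here I would argue that a corner of $R_{k-1}$ inside $v$ together with the h-pair structure would force $v$ to contain a corner of $R_k$ as well, or would make $(R_{k-1},v)$ fail to be an h-pair in a way that still contradicts hypothesis~(2); this gives Property~2. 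Finally (iii), for Property~3 of $v_1,\ldots,v_{k-1},v$, I need that $(R_{k-2},v_{k-1})$ and $(R_{k-1},v)$ are not both h-pairs and not both v-pairs. Since $v_1,\ldots,v_k$ is hvo-alternating, $(R_{k-2},v_{k-1})$ and $(R_{k-1},v_k)$ are not both of the same type; we assumed $(R_{k-1},v_k)$ is an h-pair, so $(R_{k-2},v_{k-1})$ is not an h-pair. Thus it suffices to show $(R_{k-1},v)$ is not a v-pair. But $(R_k,v)$ is an h-pair, and I would show that the h-pair structure of $(R_{k-1},v_k)$ forces $(R_{k-1},v)$ to be an h-pair too (same left or right side in $\partial R_{k-1}$ as in $\partial R_k$), hence in particular not a v-pair — establishing Property~3.

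The main obstacle I expect is step~(ii)/(iii): carefully tracking which sides of $R_{k-1}$, $R_k$, and $v$ coincide, and turning ``$(R_{k-1},v_k)$ is an h-pair'' into a precise statement relating $\partial R_{k-1}$ and $\partial R_k$ so that properties about $(R_k,v)$ transfer to $(R_{k-1},v)$. This is the same bookkeeping that appears in \cref{hvo-alternating}, and I would lean on \cref{geometric} and the reference rectangle $R^*$ idea there: when $(R_{k-1},v_k)$ is an h-pair, $R_k$ agrees with $R_{k-1}$ on the left/right sides, so any pair $(R_{k-1},u)$ is an h-pair exactly when $(R_k,u)$ is, provided $u$ meets $R_k$ and contains no corner of it. Once that equivalence is set up cleanly, all three verifications are short, and the v-pair case follows by the horizontal/vertical symmetry of the whole setup.
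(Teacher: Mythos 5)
Your skeleton is the right one, and it matches the paper's: the only possible failure of $v_1,\ldots,v_k,v$ is Property~3 at the last step, so WLOG $(R_{k-1},v_k)$ and $(R_k,v)$ are both h-pairs; since the original sequence is hvo-alternating, $(R_{k-2},v_{k-1})$ is not an h-pair, and everything reduces to showing that $v$ contains no corner of $R_{k-1}$ and that $(R_{k-1},v)$ is not a v-pair. The gap is in how you discharge these two steps. Your key claim --- that because $(R_{k-1},v_k)$ is an h-pair, $R_k$ takes its left/right sides from $R_{k-1}$, so that ``$(R_{k-1},u)$ is an h-pair exactly when $(R_k,u)$ is'' --- is false: an h-pair only guarantees that \emph{one} of the two vertical sides of $R_k$ lies on $\partial R_{k-1}$; the other vertical side can come from $v_k$ and sit strictly inside $R_{k-1}$, and $v$ may protrude from $R_k$ only across that interior side. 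Concretely, take $v_1=(0,10)\times(0,10)$, $v_2=(-1,5)\times(2,8)$ (so $R_2=(0,5)\times(2,8)$ and $(v_1,v_2)$ is an h-pair) and $v=(3,7)\times(3,7)$: then $(R_2,v)$ is an h-pair, the hypotheses of the lemma hold, but $(R_1,v)=(v_1,v)$ is an o-pair, not an h-pair. So you cannot prove Property~3 by showing $(R_{k-1},v)$ is an h-pair; luckily you only need it not to be a v-pair (o-pairs are harmless). Likewise your sketch for Property~2 (``a corner of $R_{k-1}$ in $v$ would force a corner of $R_k$ in $v$'') is not literally true: $v$ could contain a corner of $R_{k-1}$ while only crossing the interior of the top side of $R_k$; what that actually contradicts is that $(R_k,v)$ is an h-pair (it would then be a v-pair), and your fallback phrase never pins this down.

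The correct and much shorter argument --- the one in the paper --- needs no side-matching at all: since $v$ meets $R_k$ and contains no corner of it, $(R_k,v)$ is exactly one of the three types, so being an h-pair it is not a v-pair, which forces $v$ to be disjoint from the top and bottom sides of $R_k$. Because rectangles are products of open intervals, this traps the vertical extent of $v$ inside the closed vertical extent of $R_k\subseteq R_{k-1}$, so $v$ also misses the top and bottom sides of $R_{k-1}$. That single observation gives both missing pieces at once: $v$ contains no corner of $R_{k-1}$ (the corners lie on those sides), and $(R_{k-1},v)$ is not a v-pair. With that substituted for your steps (ii) and (iii), your reduction goes through.
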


\preproof\begin{proof}
  Notice that $v_1,\ldots,v_k,v$ satisfies all the conditions to be
  hvo-alternating except that $(R_{k-1},v_k)$ and $(R_k,v)$
  are both h-pairs or both v-pairs.  Without loss of generality assume
  that they are both h-pairs.

  It is sufficient to show that $(R_{k-1},v)$ is not a v-pair so that,
  by replacing $v_k$ with $v$ we are replacing the h-pair $(R_{k-1},v_k)$
  with an h-pair or an o-pair.  But this is immediate, since
  $v$ intersects $R_k$ but does not intersect the top or bottom side
  of $R_k$.  Therefore $v$ cannot intersect the top or bottom side of
  $R_{k-1}\supseteq R_k$, so $(R_{k-1},v)$ is not a v-pair.
\end{proof}

If our process repeatedly stalls, then the hvo-alternating sequence we
are growing never gets any longer; we only repeatedly change the last
element in the sequence. Next we describe the sequences of rectangles
that appear during these repeated stalls and show that such sequences
(if long enough) also determine large cliques in $G$.

A sequence $v_1,\ldots,v_k$ of rectangles is \emph{h-nesting} with
respect to a rectangle $R$ if, for each $i\in\{1,\ldots,k\}$, $(R,v_i)$
is an h-pair and, for each $i\in\{2,\ldots,k\}$, $(R\cap v_{i-1},v_i)$
is an h-pair; see \cref{nesting}.  A \emph{v-nesting sequence} is
defined similarly by replacing h-pair with v-pair.

\begin{figure}
 \begin{center}
    \includegraphics{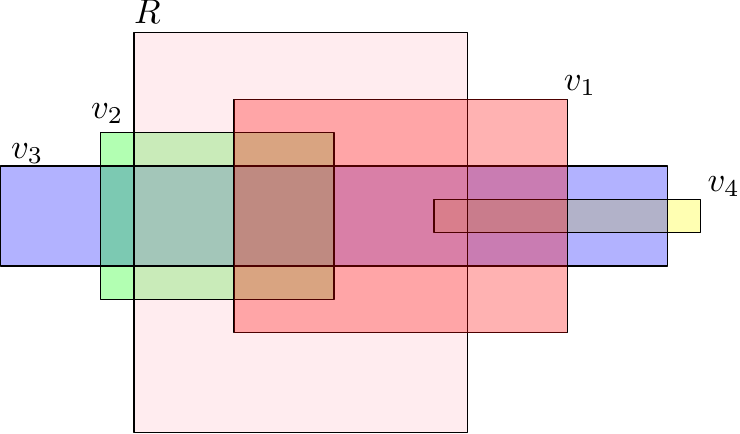}
 \end{center}
 \caption{ \label{nesting}Rectangles $v_1,\ldots,v_4$ that are h-nesting with respect to $R$.}
\end{figure}

\begin{lem}
\label{NestingLemma}
   If $v_1,\ldots,v_k$ is an h-nesting sequence or a v-nesting sequence
   with respect to $R$, then there exists a point $x\in R$ such that 
   $|\{ i: x\in v_i \}|\geq \lceil k/2\rceil$.
\end{lem}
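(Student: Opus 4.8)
The plan is to handle the h-nesting case (the v-nesting case being identical by symmetry of the coordinate axes), and within it to argue that the rectangles $R\cap v_i$ form a \emph{linearly ordered} (by containment) family of rectangles up to the choice of which vertical side of $R$ each one hugs. First I would record the structural meaning of an h-pair $(R,v_i)$: since $v_i$ contains no corner of $R$ and the left or right side of $R\cap v_i$ lies on $\partial v_i$, the intersection $R\cap v_i$ is a vertical strip of $R$ — its top and bottom sides coincide with those of $R$, and it extends horizontally from one vertical side of $R$ inward to a vertical side of $v_i$. Call this strip $S_i := R\cap v_i$; it is determined by an interval $I_i\subseteq$ (the $x$-projection of $R$) with one endpoint equal to an endpoint of that projection, i.e.\ each $S_i$ is ``left-anchored'' or ``right-anchored'' in $R$. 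Any point of $R$ lying in the $x$-fibre over the common part of $I_i$'s will lie in all the corresponding $v_i$.

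Next I would use the second part of the h-nesting hypothesis, namely that $(S_{i-1}, v_i)$ is an h-pair for each $i\ge 2$. Intersecting $v_i$ with the strip $S_{i-1}$ again produces a vertical sub-strip of $S_{i-1}$ whose left or right side lies on $\partial v_i$ and which contains no corner of $S_{i-1}$; since $S_{i-1}$ already spans $R$ vertically, this forces $S_i = R\cap v_i$ to be comparable with $S_{i-1}$ in the following precise sense: either $S_i\subseteq S_{i-1}$, or $S_i$ and $S_{i-1}$ are anchored on opposite vertical sides of $R$ and $S_{i-1}\subseteq S_i$ is impossible unless $\ldots$ — more cleanly, the interval $I_i$ shares an endpoint with $I_{i-1}$ (the same anchored endpoint if same side, or $I_i\supseteq I_{i-1}$ forced otherwise). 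The upshot I want is: among $I_1,\dots,I_k$, all those anchored on the left side of $R$ form a chain under inclusion, and likewise all those anchored on the right form a chain. So partition $\{1,\dots,k\}$ into $L$ (left-anchored) and $Q$ (right-anchored); one of them, say $L$, has size at least $\lceil k/2\rceil$. For $i\in L$ the intervals $\{I_i : i\in L\}$ are nested, so their intersection is nonempty and contains an interior point $x_0$; pick any point $x=(x_0, y_0)$ with $y_0$ strictly between the top and bottom of $R$. Then $x\in R$ and $x\in S_i\subseteq v_i$ for every $i\in L$, giving $|\{i : x\in v_i\}|\ge |L|\ge\lceil k/2\rceil$, as required.

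I would present the argument by first proving the clean claim that $R\cap v_1,\dots,R\cap v_k$ are vertical strips of $R$, then the claim that same-side strips are nested, then the pigeonhole-and-point-selection finish; the v-nesting case follows verbatim with ``vertical'' replaced by ``horizontal''. The main obstacle I expect is the nesting claim in the second step: it is geometrically obvious but needs care because ``$(S_{i-1},v_i)$ is an h-pair'' only tells us about $v_i\cap S_{i-1}$, not directly about $v_i\cap R$; I will need to note that $S_{i-1}$ and $R$ have the same top and bottom, so $v_i$ meeting $S_{i-1}$ without a corner of $S_{i-1}$ already pins down how $v_i$ sits relative to the horizontal extent of $R$, and then combine with the first hypothesis ``$(R,v_i)$ is an h-pair'' to conclude $R\cap v_i$ is itself a full-height strip sharing an endpoint of its $x$-interval with $R$ and nested appropriately with $S_{i-1}$. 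Getting the bookkeeping of left/right anchoring exactly right (rather than hand-waving ``nested'') is where I would spend the most effort, and possibly I would phrase it via the $x$-projections as real intervals sharing an endpoint, invoking that a family of intervals all sharing a common endpoint is totally ordered by inclusion.
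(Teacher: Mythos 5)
There is a genuine gap, and it starts with the definition of an h-pair. For the pair $(R,v_i)$, the paper's definition requires that the left or right side of $R\cap v_i$ be contained in $\partial R$ --- the boundary of the \emph{first} rectangle of the pair --- not in $\partial v_i$ as you assume. Geometrically this means $v_i$ reaches past (or to) a vertical side of $R$, and the no-corner condition then confines $v_i$ vertically inside $R$; so $R\cap v_i$ is a horizontal ``tongue'' anchored on the left or right side of $R$, whose top and bottom sides come from $v_i$ and lie (generically strictly) inside $R$. It is not a full-height vertical strip sharing the top and bottom of $R$: if $R\cap v_i$ had the same top and bottom as $R$ and $v_i$ strictly crossed a vertical side of $R$, then $v_i$ (an open rectangle) would contain a corner of $R$, contradicting the hypothesis, except in degenerate boundary-touching configurations. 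So your first structural claim is false under the actual definition, and the chain-plus-pigeonhole argument built on the $x$-projections $I_i$ does not get off the ground.

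A telling symptom is that your argument never really needs the second half of the h-nesting hypothesis (that $(R\cap v_{i-1},v_i)$ is an h-pair): intervals sharing a common endpoint are automatically nested, as you note. But without that hypothesis the lemma is false: take $v_1,\dots,v_k$ to be thin horizontal tongues entering $R$ through its left side at pairwise disjoint heights; each $(R,v_i)$ is an h-pair, yet no point lies in two of the $v_i$. The paper's proof uses the chain condition precisely where your sketch is silent: it shows that the vertical extents of the $v_i$ (the horizontal strips $s_i$ they span) are nested, $s_1\supseteq s_2\supseteq\cdots\supseteq s_k$, then takes a point on the left side and a point on the right side of $R$ at a height in $s_k$; each $v_i$ contains one of these two points because it crosses the left or right side of $R$ at every height of $s_i\supseteq s_k$, and the pigeonhole on left versus right --- the one ingredient your sketch does share with the correct argument --- finishes, after nudging the chosen point into the interior of $R$ using openness. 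If you repair your Step 1 to the ``anchored horizontal tongue'' picture and replace the nesting of $x$-intervals by this nesting of vertical extents derived from the second h-pair condition, your outline becomes the paper's proof.
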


\preproof\begin{proof}
   Assume, without loss of generality, that $v_1,\ldots,v_k$
   is an h-nesting sequence with respect to $R$.  Consider the
   sequence of horizontal strips $s_1,\ldots,s_k$, where each
   $s_i=(-\infty,\infty)\times (y_{i,1},y_{i,2})$ has top and
   bottom sides that coincide with those of $v_i$. Since, for each
   $i\in\{2,\ldots,k\}$, $(R,v_{i-1})$ and $(R\cap v_{i-1},v_i)$
   are both h-pairs, $s_1\supseteq s_2\supseteq\cdots\supseteq s_k$.
   Let $\ell$ be a point on the left side of $R$ contained in $s_k$
   and let $r$ be a point on the right side of $R$ contained in $s_k$.
   Since $(R,v_i)$ is an h-pair, $v_i$ contains at least one of $\ell$
   or $r$, for each $i\in\{1,\ldots,k\}$.  Therefore, at least one of
   $\ell$ or $r$ is contained in at least $\lceil k/2\rceil$ rectangles
   in $v_1,\ldots,v_k$.  Since rectangles are open, there is a point
   $x$, in the neighbourhood of $\ell$ or $r$ (as appropriate) that
   is contained in $R$ and is also contained in $\lceil k/2\rceil$
   rectangles in $v_1,\ldots,v_k$.
\end{proof}

We now introduce a particular 2-tree. 
The \emph{height-$h$ $d$-branching universal 2-tree}, $T_{h,d}$ is
defined recursively as follows:
\begin{itemize}[itemsep=0ex,topsep=0ex]
  \item $T_{-1,d}$ is the empty graph;
  \item $T_{0,d}$ is a two-vertex graph with a single edge;
  \item For $h\geq 1$, $T_{h,d}$ is obtained from $T_{h-1,d}$ by adding,
     for each edge $vw \in E(T_{h-1,d})\setminus E(T_{h-2,d})$, $d$
     new vertices $v_{1,vw},\ldots,v_{d,vw}$ each adjacent to both $v$
     and $w$.
\end{itemize}
The \emph{root edge} of $T_{h,d}$ is the single edge of $T_{0,d}$.
For each $i\in\{0,\ldots,h\}$, the level-$i$ vertices of $T_{h,d}$
are the vertices in $V(T_{i,d})\setminus V(T_{i-1,d})$.  The level-$i$
edges of $T$ are the edges that join a level-$i$ vertex to a level-$j$
vertex for some $j<i$.

Note that the number of level-$i$ edges in $T_{h,d}$ is given by the recurrence
\[
   m_i = \begin{cases}
           1 & \text{if $i=0$} \\
           2dm_{i-1} & \text{otherwise}
       \end{cases}
\]
which resolves to $(2d)^{i}$.  Thus, the total number of edges in
$T_{h,d}$ is less than $(2d)^{h+1}$.

We are now ready to prove the main result of this section.

\begin{thm}
\label{path-times-path}
  For each $k\in \N$, every rectangle intersection graph that contains
  $T_{4k-7,\iters}$ as a subgraph contains a clique of size $k$.
\end{thm}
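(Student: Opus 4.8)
The plan is to embed a copy of $T:=T_{4k-7,(k-1)^2}$ in the given rectangle intersection graph, identify each vertex of $T$ with its rectangle, and then descend $T$ starting from its root edge, at all times maintaining an hvo-alternating sequence of rectangles whose last two members are the two rectangles of the current edge of $T$. By Property~1 of an hvo-alternating sequence together with \cref{hvo-alternating}, the members of such a sequence have a common point, so a sequence of length $k$ immediately yields a $k$-clique in $G$; the whole point is therefore to reach length $k$, or to stumble on a $k$-clique by a side route, before the $4k-7$ levels of $T$ are exhausted.

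I would run the descent in phases, each phase lengthening the current sequence by one while consuming a bounded number of levels. Suppose the current edge is $\{a,b\}$, the current sequence is $\sigma=(\dots,A,B)$ with running intersection $R:=A\cap B$ (this equality is \cref{hvo-alternating}), and consider the $(k-1)^2$ children of $\{a,b\}$ in $T$. For any child $c$, the rectangles $A,B,C$ pairwise intersect, so by \cref{helly} $A\cap B\cap C\neq\emptyset$; thus appending $C$ to $\sigma$ automatically satisfies Property~1. If \emph{every} child contains a corner of $R$, then by pigeonhole at least $\lceil (k-1)^2/4\rceil$ children share a common corner $p$ of $R$, and together with $a$ and $b$ (which contain $p\in R$) they form a clique of size $\lceil (k-1)^2/4\rceil+2\ge k$ (the inequality is $(k-3)^2\ge 0$), so we are done. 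Otherwise take a corner-free child $c$. If appending $C$ does not violate Property~3 — the last pair of $\sigma$ and $(R,C)$ are not both h-pairs and not both v-pairs — append $C$ and pass to the edge $\{b,c\}$: the phase is over. If it does violate Property~3, then \cref{replacement} lets us replace the last member of $\sigma$ by $C$ and pass to the edge $\{a,c\}$, keeping $\sigma$ hvo-alternating; after this replacement the new last pair is an h-pair or an o-pair (the opposite orientation is impossible), which constrains what can happen next.

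A phase can stall: the replacement step can recur. But, as in the discussion preceding \cref{NestingLemma}, a run of consecutive replacements produces an h-nesting (or v-nesting) sequence with respect to the running intersection $R^-$ of the part of $\sigma$ surviving the replacements; \cref{NestingLemma} then gives a point of $R^-$ lying in half of the nesting rectangles, and adding the surviving prefix of $\sigma$ (every member of which contains all of $R^-$) produces a clique. The thresholds are to be chosen so that a stall run long enough to evade this bound cannot be completed inside $T$: whenever a stall run threatens to become that long it has already exhibited $k$ pairwise-intersecting rectangles; otherwise a corner-free child that genuinely extends $\sigma$ is reached after a bounded number of levels, so one phase costs at most four levels of $T$. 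Starting from the length-$2$ sequence given by the root edge — after spending at most a level or two to turn the root edge (or a child of it) into an hvo-alternating pair when its two rectangles overlap diagonally, again via the corner pigeonhole — and running $k-2$ phases brings $\sigma$ to length $k$; since $1+4(k-2)=4k-7$, the levels of $T$ suffice.

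The main obstacle is exactly this level/branching bookkeeping, which is tight. One must pin a phase down to at most four levels, which forces a careful case analysis of the h/v/o types of the pairs met along the descent — in particular exploiting that a replacement can only turn an h-pair into an h- or o-pair (never a v-pair), and symmetrically — and one must show that every sufficiently long stall run already contains a $k$-clique. The quadratic branching factor $(k-1)^2$ must be spent in two ways at once: the pigeonhole over the four corners of $R$ (which on its own would need only $\Theta(k)$ children), and the analysis of the side-crossing rectangles that arise inside a stall run, where among the $(k-1)^2$ candidates one would extract either $k$ sharing a common point or $k-1$ that are suitably nested, feeding \cref{NestingLemma} together with the prefix. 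Getting all these thresholds to line up so that $d=(k-1)^2$ and $h=4k-7$ are exactly what is required is where essentially all the work lies.
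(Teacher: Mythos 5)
Your skeleton is the paper's (hvo-alternating sequences, \cref{helly}, the corner pigeonhole, \cref{replacement} on stalls, \cref{NestingLemma}), but the level/branching bookkeeping — which you correctly identify as where all the work lies — contains a genuine gap. Your claim that a phase costs at most four levels rests on extracting, from the $(k-1)^2$ children of the current edge \emph{at a single level}, either $k$ rectangles with a common point or $k-1$ rectangles that are suitably nested. Children of one edge of a 2-tree are not adjacent to one another, so their rectangles need not pairwise intersect at all: the $(k-1)^2$ children can be pairwise disjoint thin horizontal strips, each crossing the left side of $R_i$ at a different height. Then no child contains a corner of $R_i$, every child forms an h-pair with $R_i$ (so if the previous pair was an h-pair, every child stalls), no two children share a point, and no nesting relation holds among them; the only available move is one replacement followed by a descent of one level, and the same configuration can recur at the next level inside the chosen strip. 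The nesting that feeds \cref{NestingLemma} is created only \emph{across} consecutive stalls — each stalling candidate is a child of the edge formed with the previous stalling candidate — and a stall run at position $i$ may need $2(k-i)$ descents before the nested family, together with the prefix $v_1,\dots,v_{i-1}$, yields a $k$-clique. So a single phase can consume $\Theta(k)$ levels, the total level budget this style of argument needs is $\sum_{i=2}^{k}\bigl(2(k-i)+1\bigr)=(k-1)^2$, and a height of $4k-7$ is not enough for your sketch.

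This is exactly how the paper's proof allocates the two parameters: it runs $(k-1)^2$ iterations, descending one level per iteration (successes and stalls alike), and uses $4k-7$ children per edge so that the corner pigeonhole gives $\lceil(4k-7)/4\rceil=k-1$ rectangles through one corner. (Note that the roles of $4k-7$ and $(k-1)^2$ in the paper's proof are swapped relative to a literal reading of the subscripts $T_{h,d}$ in the statement; you followed the literal reading, but your argument does not establish that version, and the paper's proof establishes the version with height $(k-1)^2$ and branching $4k-7$.) A smaller point: a corner $p$ of $R=a\cap b$ lies on the boundary of the open rectangles $a$ and $b$, so before adding $a,b$ to the clique you must perturb $p$ slightly into $R$ using the openness of the children, as the paper does. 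The essential problem, though, is the per-phase level bound: without a genuinely new idea for resolving a stall run in $O(1)$ levels, the proof as proposed does not go through.
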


\preproof\begin{proof}
  The cases $k=1$ and $k=2$ are trivial so, for the remainder of the proof
  we assume that $k\geq 3$.

  Let $G$ be a rectangle intersection graph that contains
  $T=T_{4k-7,\iters}$ as a spanning subgraph.  We use the convention that
  $V(G)=V(T)$ so that vertices of $T$ are rectangles in $V(G)$.  
  We will attempt to define a path $v_1,\ldots,v_k$ in $T$ such that
  $v_1,\ldots,v_k$ is hvo-alternating. 

  Let $r_0r_1$ be the root edge of $T$. We set $v_1=r_0$ and use the convention
  that $v_0=r_1$.  We will perform $\iters$ iterations, each of which
  tries to add another vertex, $v_{i+1}$, to a partially constructed
  hvo-alternating path $v_1,\ldots,v_i$. During iteration $t$, for
  each $t\in\{1,\ldots,\iters\}$, the procedure will consider a
  level-$t$ vertex, $v$, of $T$ to include in the path.  At the end of
  iteration $t$, the last vertex in the partially constructed sequence
  is always $v$, a level-$t$ vertex.

  At the beginning of iteration $t$, $v_i$ is a level-$(t-1)$ vertex, so
  $v_{i-1}$ and $v_i$ are both adjacent to a set $S$ of $4k-7$ level-$t$
  vertices.  Each of the rectangles in $S$ intersects $v_{i-1}$ and
  $v_i$ so, by \cref{helly}, each rectangle in $S$ intersects
  $v_{i-1}\cap v_i$.  Therefore, by \cref{hvo-alternating}, each of
  the rectangles in $S$ intersects $R_i=\bigcap_{j=1}^i v_j$.

  If each rectangle in $S$ contains at least one corner of $R_i$, then
  some corner of $R_i$ is contained in at least
  \[
         \left\lceil(4k-7)/4\right\rceil = k-1
  \]
  rectangles.  Since these rectangles are open, there is a point $x$
  contained in these $k-1$ rectangles and in $v_{i}$.  The resulting set
  of $k$ rectangles therefore form a $k$-clique in $G$ and we are done.

  Otherwise, some rectangle $v\in S$ does not contain a corner of $R_i$.
  Notice that the sequence $v_1,\ldots,v_i,v$ is hvo-alternating except,
  possibly, that the pairs $(R_{i-1},v_i)$ and $(R_i,v)$ are both h-pairs
  or both v-pairs.  There are two cases to consider:
  \begin{enumerate}[itemsep=0ex,topsep=0ex]
     \item $v_1,\ldots,v_i,v$ is hvo-alternating. In this
       case we say that the procedure \emph{succeeds} in iteration $t$
       and we set $v_{i+1}=v$.

     \item $v_1,\ldots,v_i,v$ is not hvo-alternating.  In this case,
       we say that the procedure \emph{stalls} in iteration $t$.
       In this case, we \emph{change} $v_i$ by setting $v_i=v$. By
       \cref{replacement} the resulting sequence is hvo-alternating,

       Note that in this case we have failed to make our path any
       longer. Instead, we have only replaced the last element with a
       level-$t$ vertex.  Regardless, the next iteration will try to
       extend the path with the new value of $v_i$.
  \end{enumerate}
  If we allow this procedure to run sufficiently long, then one of two
  cases occurs:
  \begin{enumerate}[itemsep=0ex,topsep=0ex]
     \item At least $k-1$ iterations are successes.  In this case, we
     find $v_2,\ldots,v_k$, so that $v_1,\ldots,v_k$ is an hvo-sequence
     whose vertices form a $k$-clique in $G$.

     \item Some element of our sequence, $v_i$ takes on a sequence
     $S_i=v_{i,0},\ldots,v_{i,2(k-i)}$ of $2(k-i)+1$ different values
     because the procedure stalls $2(k-i)$ times while trying to
     select $v_{i+1}$.  In this case, $S_i$ is either h-nesting or
     v-nesting with respect to $R_{i-1}$ so, by \cref{NestingLemma}, some
     subset $\{w_0,\ldots,w_{k-i}\}\subset S_i$ of rectangles in $S_i$
     all have a common intersection that includes a point of $R_{i-1}$.
     But $R_{i-1}$ is the common intersection of $v_1,\ldots,v_{i-1}$.
     Therefore $v_1,\ldots,v_{i-1},w_{0},\ldots,w_{k-i}$ all have a
     point in common and form a $k$-clique in $G$.
  \end{enumerate}
  The number of iterations required before this procedure finds
  a $k$-clique in $G$ is at most
  \[
      h = \sum_{i=2}^k (2(k-i)+1) = k + (k-2)(k+1) = (k-1)^2 \enspace .
  \]
  During these $h$ iterations, the procedure selects a level-$j$ vertex
  of $T$ for $j=1,2,\ldots,h$.  Since $T$ has height $(k-1)^2 = h$, the
  procedure succeeds in finding a $k$-clique before running out of levels.
\end{proof}

This completes the proof of \cref{SeriesParallel}. 

We now give an application of \cref{SeriesParallel} in the setting of graph partitions.

\begin{prop}
There is no constant $c$ such that every series-parallel graph has a vertex-partition into two induced subgraphs each with pathwidth at most $c$. 
\end{prop}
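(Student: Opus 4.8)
The plan is to reduce to \cref{SeriesParallel} through the geometric reformulation recorded in the Observation of this section (a graph has two $k$-orthogonal path decompositions if and only if it is a subgraph of a rectangle intersection graph with maximum clique size at most $k$). Suppose, for contradiction, that there is a constant $c$ such that every series-parallel graph admits a vertex-partition into two induced subgraphs each of pathwidth at most $c$. I will show this forces every series-parallel graph to be a subgraph of a rectangle intersection graph with clique number at most $2c+2$; by the Observation, every series-parallel graph then has two $(2c+2)$-orthogonal path decompositions, contradicting \cref{SeriesParallel}.

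To do this, take an arbitrary series-parallel graph $G$ with a vertex-partition $V(G)=V_1\cup V_2$ such that $\pw(G[V_i])\le c$ for $i\in\{1,2\}$. The key input is the standard characterisation of pathwidth (see \citep{Bodlaender-TCS98}): a graph has pathwidth at most $c$ if and only if it is a subgraph of an interval graph with clique number at most $c+1$. So I would fix a family of open bounded intervals $(I_v:v\in V_1)$, all contained in $(0,1)$, whose intersection graph has clique number at most $c+1$ and contains $G[V_1]$ as a subgraph, and --- now along the other coordinate axis --- a family $(J_w:w\in V_2)$ contained in $(0,1)$ with the analogous properties relative to $G[V_2]$. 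Then inflate each interval to a strip: set $R_v:=I_v\times(-1,2)$ for $v\in V_1$ and $R_w:=(-1,2)\times J_w$ for $w\in V_2$. Keeping the $I_v$'s and $J_w$'s inside $(0,1)$ makes each $R_v$ and each $R_w$ a genuine bounded open axis-aligned rectangle. Let $H$ be the intersection graph of $\{R_v:v\in V_1\}\cup\{R_w:w\in V_2\}$, with vertices identified with $V(G)$ in the obvious way. Two strips $R_v,R_{v'}$ with $v,v'\in V_1$ meet exactly when $I_v\cap I_{v'}\ne\emptyset$, hence whenever $vv'\in E(G)$; symmetrically for pairs within $V_2$; and a vertical strip always meets a horizontal strip, so every $V_1$--$V_2$ edge of $G$ is also present in $H$. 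Therefore $G$ is a subgraph of $H$.

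The remaining step is to bound $\omega(H)$. A clique $C$ of $H$ splits as $C=C_1\cup C_2$ according to which side of $(V_1,V_2)$ each of its rectangles came from. Since the rectangles in $C_1$ pairwise intersect, so do their $x$-projections, the intervals $I_v$; thus $C_1$ induces a clique in the intersection graph of $(I_v:v\in V_1)$ and so $|C_1|\le c+1$, and symmetrically $|C_2|\le c+1$, giving $|C|\le 2c+2$. Hence $\omega(H)\le 2c+2$, and the Observation yields the desired contradiction.

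I do not anticipate a serious obstacle; the one point deserving care is that passing from $G$ to $H$ adds \emph{every} edge between $V_1$ and $V_2$ (every vertical strip meets every horizontal strip), yet this does not hurt the clique-number bound, because within any clique these cross edges merely attach a $C_2$ of size at most $c+1$ to a $C_1$ of size at most $c+1$. The minor technicalities --- open versus closed intervals, bounded versus unbounded rectangles --- are handled by perturbing to open intervals and confining everything to a fixed window, as above.
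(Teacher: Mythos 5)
Your argument is correct, and it reaches the same contradiction with Theorem~\ref{SeriesParallel} as the paper, but by a more geometric detour. The paper's own proof is two lines: given path decompositions $P_1$ of $G[V_1]$ and $P_2$ of $G[V_2]$ of width at most $c$, add all of $V_2$ to every bag of $P_1$ and all of $V_1$ to every bag of $P_2$; a bag of one enlarged decomposition meets a bag of the other in at most $(c+1)+(c+1)$ vertices, so the two are $(2c+2)$-orthogonal, contradicting Theorem~\ref{SeriesParallel} directly. You instead pass through the interval-graph characterisation of pathwidth, build vertical strips over the $V_1$-intervals and horizontal strips over the $V_2$-intervals, bound the clique number of the resulting rectangle intersection graph by $2c+2$ via Helly on each axis, and then invoke the Observation of Section~\ref{TwoPaths} to translate back into two $(2c+2)$-orthogonal path decompositions. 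All steps are sound (in particular the point you flag yourself, that the strips create every $V_1$--$V_2$ edge, is harmless since only a subgraph relation and the clique bound are needed). It is worth noticing, though, that your construction is the paper's argument in geometric clothing: sweeping a vertical line across your picture yields exactly the bags ``(bag of $P_1$) $\cup\, V_2$'', and sweeping a horizontal line yields ``(bag of $P_2$) $\cup\, V_1$''. What your route costs is two extra external ingredients --- the equivalence of pathwidth at most $c$ with being a subgraph of an interval graph of clique number $c+1$, and the (stated but unproved) Observation --- where the paper's version needs neither; what it buys is an explicit rectangle picture, which also lets you shortcut to Theorem~\ref{path-times-path} directly if desired.
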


\preproof\begin{proof}
Suppose that every series-parallel graph $G$ has such a partition $\{V_1,V_2\}$ of $V(G)$, where $P_1$ and $P_2$ are path decompositions of $G[V_1]$ and $G[V_2]$ respectively, each with width at most $c$. Adding $V_2$ to every bag of $P_1$ and $V_1$ to every bag of $P_2$ gives two $(2c+2)$-orthogonal path decompositions of $G$, contradicting \cref{SeriesParallel}.
\end{proof}

This result is in contrast to a theorem of \citet{DDOSRSV04}, which says that for every fixed graph $H$ there is a constant $c$, such that every $H$-minor-free graph has a vertex-partition into two induced subgraphs each with treewidth at most $c$. 

\section{Boxicity Connections}
\label{BoxicityConnections}

\cref{TwoPaths} attempts to understand which graphs have two path decompositions with bounded intersections.  This turns out to be equivalent to the more geometric problem of understanding which graphs are subgraphs of rectangle intersection graphs of bounded clique size.  There are several other ways we could generalize these problems.


Does adding extra dimensions help?  A \emph{$d$-dimensional box intersection graph} is a
graph whose vertices are $d$-dimensional axis-aligned boxes and for which
two vertices are adjacent if and only if they intersect.  For a graph,
$G$, the smallest $d$ such that $G$ is a $d$-dimensional box intersection graph
is known as the \emph{boxicity} of $G$.

\begin{op}
   For each $d\in\N$, what is the smallest value of $r=r_d$ for which
   the following statement is true: For every $k\in\N$ there exists an $r$-tree
   $T$ such that every graph $G$ of boxicity $d$ that contains $T$
   as a subgraph contains a $k$-clique?
\end{op}

\cref{path-times-path} shows that $r_2 \leq 2$, and the obvious
representation of trees (1-trees) as rectangle intersection graphs shows
that $r_2> 1$, so $r_2=2$.
For $d>2$, we now show  that $r_d \leq 2d$.

\begin{prop}
\label{2dtree}
For every $k\geq 2$ and $d\geq 1$, there exists a $2d$-tree $T_k$ such that
every intersection graph of $d$-dimensional boxes that contains $T_k$ as a
subgraph contains a $k$-clique.
\end{prop}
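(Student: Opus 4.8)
The plan is to reduce the $d$-dimensional problem to $d$ independent copies of the planar ($2$-dimensional) situation, exploiting the product structure of $d$-dimensional boxes. Recall that a $d$-dimensional axis-aligned box is a Cartesian product $I^1\times\cdots\times I^d$ of $d$ open real intervals. The key observation is that the argument behind \cref{path-times-path} really only uses that, when we try to extend an hvo-alternating path by a new rectangle $v$ incident to the current ``last'' pair, the extension fails only when a certain one-dimensional alignment forces itself; when we iterate a stall, we get an h-nesting or v-nesting sequence and \cref{NestingLemma} produces a clique. Running the same procedure in $d$ dimensions, each ``coordinate'' behaves like the horizontal/vertical dichotomy of the planar case, but now there are $2d$ possible ``alignment types'' (two faces per coordinate) instead of $4$; so a stall can be repeated more times before a clique is forced, and the tree needs to be correspondingly taller and wider.

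Concretely, I would first set up the $d$-dimensional analogues of the planar definitions: call a pair $(v,w)$ of boxes (with $w$ containing no corner of $v$) \emph{$j$-aligned} if the intersection $v\cap w$ has a face in coordinate $j$ that lies on $\partial v$; if no coordinate is aligned, call it an \emph{o-pair}. A sequence $v_1,\ldots,v_k$ is \emph{box-alternating} if each $v_i$ meets $R_{i-1}=\bigcap_{j<i}v_j$, contains no corner of $R_{i-1}$, and no two consecutive pairs $(R_{i-1},v_i)$, $(R_i,v_{i+1})$ are $j$-aligned for the same $j$. The Helly property for boxes (a box is a product of intervals, so boxes form a Helly family of order $2$ in each coordinate, hence of order $2$ overall) gives the analogue of \cref{helly} and of \cref{hvo-alternating}: a box-alternating sequence satisfies $\bigcap_{i=1}^k v_i = v_{k-1}\cap v_k$. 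The replacement lemma (\cref{replacement}) generalizes verbatim: if appending $v$ fails, it fails because $(R_{i-1},v_i)$ and $(R_i,v)$ are both $j$-aligned for one specific $j$, and $v$ not meeting the $j$-faces of $R_i$ means it doesn't meet the $j$-faces of $R_{i-1}\supseteq R_i$, so swapping $v$ in removes that alignment. Finally, a ``$j$-nesting sequence'' with respect to $R$ gives, by the one-dimensional pigeonhole used in \cref{NestingLemma} applied in coordinate $j$, a point of $R$ in at least $\lceil k/2\rceil$ of the boxes.

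With these pieces in hand, run the growing procedure on $T_k := T_{h,d'}$ for suitable $h$ and branching $d'$. At each step, the current last pair is adjacent in $T$ to a large set $S$ of children; if a constant fraction of them contain a corner of $R_i$ we finish (a box has $2^d$ corners, so we need roughly $2^d(k-1)$ children to force $k-1$ boxes through one corner), otherwise we get a candidate $v$ with no corner of $R_i$, and we either succeed (path grows) or stall (swap last element, producing a longer $j$-nesting run in some fixed coordinate $j$). A coordinate-$j$ nesting run of length $2(k-i)+1$ forces a $k$-clique via the nesting lemma, but now a stall can recur in $d$ different coordinates before being forced to produce one, so at level $i$ the procedure may stall up to roughly $d\cdot 2(k-i)$ times; summing gives a required height $h = O(d k^2)$, and the branching $d'$ is chosen as a constant (depending on $k,d$) large enough to supply the needed corner-pigeonhole fraction. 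Since $T_{h,d'}$ is a $2$-tree — wait, here we instead want a $2d$-tree: build $T_k$ so that each new vertex is glued onto a $2d$-clique already present, which we can arrange by starting from a $2d$-clique realized as a grid-like box configuration and branching as in $T_{h,d}$; the path the procedure extracts then lives inside this $2d$-tree, and each ``pair'' of boxes used by the procedure can be thickened to a $2d$-clique because in $d$ dimensions the relevant common-intersection arguments still go through. Thus every $d$-dimensional box intersection graph containing $T_k$ contains a $K_k$, giving $r_d \le 2d$.

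\emph{Main obstacle.} The delicate point — and the step I expect to absorb most of the work — is the bookkeeping of ``alignment types'' when there are $d$ coordinates rather than one horizontal/vertical split: one must verify that a repeated stall really does confine itself to a \emph{single} fixed coordinate $j$ (so that the nesting argument applies), rather than oscillating among several coordinates, and correspondingly get the right count $d\cdot 2(k-i)$ of permissible stalls at each level so that the height $h=O(dk^2)$ and the branching factor are finite. A secondary subtlety is confirming that the extracted path can be ``fattened'' to a genuine $2d$-tree subgraph of $T_k$ — i.e. choosing the universal tree $T_k$ so that the procedure's path, together with the two previous vertices at each step, always spans a $2d$-clique of $T_k$ — which is where the ``$2d$'' (rather than ``$2$'') in the statement comes from.
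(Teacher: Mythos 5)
Your proposal has a genuine gap: the planar machinery you invoke does not ``generalize verbatim'' to $d\geq 3$, and the places where it breaks are exactly the places you flag but never resolve. In the plane, the hypothesis that $w$ intersects $v$ but contains no corner of $v$ forces the clean trichotomy h-pair / v-pair / o-pair, and the proof of \cref{hvo-alternating} rests on a genuinely two-dimensional containment ($v_2\cap v_3\subseteq R^*\subseteq v_1$, where $R^*$ mixes the horizontal extent of one box with the vertical extent of the other). For $d\geq 3$, ``contains no corner of $R_{i-1}$'' is a much weaker condition: two boxes can cross each other so that neither contains a corner of the other while the intersection has facets on $\partial v$ in several coordinates at once, so a pair need not be ``$j$-aligned'' for a unique $j$, the analogue of \cref{replacement} does not follow by the same one-line argument, and your central worry (that repeated stalls confine themselves to a single coordinate so that \cref{NestingLemma} applies) is precisely what fails to be established. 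Moreover, a path-growing argument in the spirit of \cref{path-times-path} attached to a $2$-tree-like skeleton cannot possibly work for large $d$, since $(d-2)$-trees have boxicity at most $d$; you notice this mid-proof and pivot to ``glue new vertices onto $2d$-cliques,'' but you never specify the tree, the invariant, or how the alternation/nesting bookkeeping interacts with $2d$-cliques, so the construction is not actually defined. As written, the proposal is a strategy with its two load-bearing steps left open.

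The statement is in fact much easier than \cref{path-times-path}, and the paper proves it without any alternation analysis. The only geometric facts needed are: (i) pairwise intersecting boxes have a common point (Helly, coordinatewise), and (ii) the common intersection of any finite family of $d$-dimensional boxes equals the intersection of at most $2d$ of them, because the intersection box is cut out by $2d$ facet constraints and each constraint can be charged to one box of the family. Take $T_0$ to be a $(2d+1)$-clique and obtain $T_{i}$ from $T_{i-1}$ by attaching a new vertex to every $2d$-clique; this is a $2d$-tree. Given a box representation of a graph containing $T_r$, maintain a growing clique $X_i$ together with a $2d$-element subset $S_i\subseteq X_i$ forming a clique of $T_i$ with $\bigcap X_i=\bigcap S_i\neq\emptyset$: the child $v$ of the $2d$-clique $S_i$ in $T_{i+1}$ meets every box of $S_i$, hence meets $\bigcap S_i$ by (i), so $X_{i+1}:=X_i\cup\{v\}$ is a clique of size $2d+1+i+1$; by (ii) one box of $S_i\cup\{v\}$ is redundant and can be dropped to form $S_{i+1}$. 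With $r=\max\{0,k-2d-1\}$ this yields a $k$-clique. I recommend abandoning the alternating-sequence route for this proposition and using this redundancy-plus-Helly induction instead.
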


\begin{proof}
The $2d$-tree $T_k$ is defined inductively: $T_0$ is a $(2d+1)$-clique. To
obtain $T_i$, attach a vertex adjacent to each $2d$-clique in $T_{i-1}$.

Now consider some box representation of a graph $G$ that contains $T_r$ as
a subgraph, so that the vertices of $G$ are $d$-dimensional boxes. 
We find the desired clique inductively by constructing two sequences of
sets $S_0,\dots,S_r$ and $X_0 \subseteq \dots \subseteq X_r$ 
that satisfy the following conditions for each $i\in \{0,1,\dots, r\}$:
\begin{enumerate}[itemsep=0pt,topsep=0pt]
\item $|S_i| = 2d$,
\item $S_i \subseteq X_i \subseteq V(T_i)$,
\item $S_i$ forms a clique in $T_i$,
\item $\cap X_i = \cap S_i$ (so $X_i$ is a $(2d+1+i)$-clique in $G$).
\end{enumerate}

Let $X_0 := V(T_0)$. To find $S_0$, look at the common intersection $\cap X_0$.  At least one of the boxes $x$ in $X_0$ is redundant in the sense that 
$\cap X_0 = \cap (X_0 \setminus {x})$. Let $S_0 := X_0 \setminus \{x\}$.  Observe that $X_0$ and
$S_0$ satisfy Conditions 1--4.

Now for the inductive step. By Conditions 1 and 3, the vertices of $S_i$ form a $2d$-clique in $T_i$, so
there is some vertex $v$ in $T_{i+1}$ that is adjacent to all the vertices
in $S_i$. Let $X_{i+1}:=X_i \cup \{v\}$ and let $S_{i+1}$ be the subset of $S_i \cup
\{v\}$ obtained by removing one redundant box.  Then $X_{i+1}$ and
$S_{i+1}$ satisfy Conditions 1--4.

Set $r:=\max\{0,k-2d-1\}$. Then every graph of boxicity at most $d$ that contains $T_r$ contains a $k$-clique.
\end{proof}

On the lower-bound side, a result of \citet{thomassen:interval}
states that every planar graph has a boxicity at most 3. Since every
2-tree is planar, this implies $r_d \geq 3$ for $d\geq 3$.  More generally,
\citet{chandran.sivadasan:boxicity} show that every graph $G$ has boxicity at most $\tw(G)+2$, so
every $(d-2)$-tree has boxicity at most $d$. This implies that $r_d\geq d-1$.

Returning to two dimensions, recall that the Koebe--Andreev--Thurston
theorem states that every planar graph is a subgraph of some intersection
graph of disks for which no point in the plane is contained in more
than two disks.  2-trees are a very special subclass of planar graphs,
so \cref{path-times-path} shows that axis-aligned rectangles are very
different than disks in this respect. Thus, we might ask how expressive
other classes of plane shapes are.


For a set $C$ of lines, we can consider intersection graphs
of \emph{$C$-oriented convex shapes}: convex bodies whose boundaries
consist of linear pieces, each of which is parallel to some line in $C$.
(Axis-aligned rectangles are $C$-oriented where $C$ is the set consisting
of the x-axis and y-axis.)  For a set $C$ of lines, let $\mathcal{G}_C$
denote the class of intersection graphs of $C$-oriented convex shapes.
For an integer $c$, let $\mathcal{G}_c=\bigcup_{C:|C|=c}\mathcal{G}_C$.

\begin{op}
   For each $c\in\N$, what is the smallest value of $r=r_c$ such that the
   following statement is true: For every $k\in\N$, there exists an
   $r$-tree $T$ such that every graph $G\in\mathcal{G}_c$ that contains
   $T$ as a subgraph contains a $k$-clique?
\end{op}

As in the proof of \cref{2dtree}, we can use the fact that, for any set $B$ of $C$-oriented convex shapes, there is a subset $B'\subset B$ with $|B'|\leq 2|C|$ and $\cap B=\cap B'$ to establish that $r_c \leq 2c$. We do not know a lower bound better than $r_c > 1$.

\section{Colouring Connections}
\label{ColouringConnections}

The \emph{tree-chromatic number} of a graph $G$, denoted by $\text{tree-}\chi(G)$, is the minimum integer $k$ such that $G$ has a tree decomposition in which each bag induces a $k$-colourable subgraph. This definition was introduced by \citet{Seymour16}; also see \citep{HK17,BFMMSTT17}. For a graph $G$, define the \emph{2-dimensional treewidth} of $G$, denoted $\ttw(G)$, to be the minimum integer $k$ such that $G$ has two $k$-orthogonal tree decompositions $S$ and $T$. For each bag $B$ of $S$, note that $T$ defines a tree decomposition of $G[B]$ with width $k-1$, implying $G[B]$ is $k$-colourable. Thus 
\begin{equation}
\label{treechi}
\ttw(G) \geq \text{tree-}\chi(G) \geq \omega(G).
\end{equation}
Obviously, $\ttw(G)\geq\omega(G)$ and $\text{tree-}\chi(G)\geq\omega(G)$. 
\Cref{treechi} leads to lower bounds on $\ttw(G)$. For example, \citet{Seymour16} showed that tree-chromatic number is unbounded on triangle-free graphs. Thus $\ttw$ is also unbounded on triangle-free graphs. 

On the other hand, we now show that there are graphs with bounded tree-chromatic number (even path-chromatic number) and unbounded $\ttw$. The \emph{shift graph} $H_n$ has vertex set $\{(i,j):1\leq i<j\leq n\}$ and edge set $\{(i,j)(j,\ell):1\leq i<j<\ell\leq n\}$. 
It is well known that $H_n$ is triangle-free with chromatic number at least $\log_2 n$; see \citep{HN04}. 
\citet{Seymour16} constructed a path decomposition of $H_n$ such that each bag induces a bipartite subgraph. We prove the following result that separates $\text{tree-}\chi$ and $\ttw$. 

\begin{thm}\label{SplitTheorem}
For every $k$ there exists $n$ such that the shift graph $H_n$ does not admit two $k$-orthogonal tree decompositions. 
\end{thm}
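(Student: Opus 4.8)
The plan is to assume that $H_n$ has two $k$-orthogonal tree decompositions $S$ and $T$ and to bound $n$ in terms of $k$. Write the vertices of $H_n$ as arcs $(i,j)$ with $i<j$, and for an index $j\in[n]$ put $I_j:=\{(i,j):i<j\}$ (its \emph{in-star}) and $O_j:=\{(j,\ell):\ell>j\}$ (its \emph{out-star}). In $H_n$ every arc of $I_j$ is adjacent to every arc of $O_j$, so $H_n[I_j\cup O_j]$ is a complete bipartite graph; also, for any $J\subseteq[n]$ the subgraph of $H_n$ induced by $\{(i,j):i,j\in J\}$ is a copy of $H_{|J|}$. The first tool is an elementary fact about a tree $R$: if $\mathcal A,\mathcal B$ are families of subtrees of $R$ with every member of $\mathcal A$ meeting every member of $\mathcal B$, then $\bigcap\mathcal A\neq\emptyset$ or $\bigcap\mathcal B\neq\emptyset$ (if $\mathcal A$ is pairwise intersecting this is the Helly property of subtrees; otherwise two members of $\mathcal A$ are disjoint and every member of $\mathcal B$ must contain the path of $R$ joining them). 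Applying this to the tree underlying a tree decomposition $D$ of $H_n$, with $\mathcal A=\{D_{(i,j)}:i<j\}$ and $\mathcal B=\{D_{(j,\ell)}:\ell>j\}$ — which cross-intersect because $I_j$ and $O_j$ are completely joined — yields: for every index $j$, either some bag of $D$ contains all of $I_j$ (say $D$ \emph{in-concentrates} $j$) or some bag contains all of $O_j$ ($D$ \emph{out-concentrates} $j$).

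Since $S$ and $T$ are $k$-orthogonal, restricting one of them to a bag of the other shows that every bag of $S$ and of $T$ induces a subgraph of treewidth at most $k-1$. The decisive consequence of concentration is: if some bag $S_x\supseteq O_j$ and some bag $T_y\supseteq O_j$, then $O_j\subseteq S_x\cap T_y$, so $n-j=|O_j|\le k$; dually, if $I_j$ lies in a single bag of each of $S$ and $T$, then $j-1=|I_j|\le k$. Hence for $k+1<j<n-k$, the index $j$ cannot be out-concentrated by both $S$ and $T$, nor in-concentrated by both; and it cannot even be both in- and out-concentrated by $S$ alone, because that extra concentration would combine with one of $j$'s concentrations in $T$ to force $j\le k+1$ or $j\ge n-k$. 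Thus every $j\in\{k+2,\dots,n-k-1\}$ has a well-defined concentration direction in $S$ and in $T$, and these directions differ. By pigeonholing (and, if needed, applying the order-reversing automorphism $(i,j)\mapsto(n+1-j,\,n+1-i)$ of $H_n$), we obtain a set $J$ with $|J|\ge(n-2k-2)/2$ on which $S$ out-concentrates every index and $T$ in-concentrates every index. Restricting $S$ and $T$ to the index set $J$ gives a copy of $H_m$ ($m=|J|$) carrying $k$-orthogonal tree decompositions $S',T'$ in which every out-star lies in a single bag of $S'$ and every in-star in a single bag of $T'$; call this the \emph{fully crossed} configuration.

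It remains to contradict the fully crossed configuration for $m$ large, and this is where the work lies. The simple intersection trick no longer helps, because the out-star and in-star of a vertex are disjoint. What the tree structure does give: for each index $b$, the nodes of the $S'$-tree whose bag contains $O_b$ form a nonempty subtree $\Xi_b$, and a Helly argument (using that $(a,b)$ is adjacent to all of $O_b$ and that the subtrees for the arcs of $O_b$ all pass through $\Xi_b$) shows that every subtree $S'_{(a,b)}$ with $a<b$ meets $\Xi_b$; trivially $\Xi_a\subseteq S'_{(a,b)}$ as well. So in the $S'$-tree the subtree $S'_{(a,b)}$ links the hubs $\Xi_a$ and $\Xi_b$, for all $a<b$, and dually there are hubs $\Psi_b$ in the $T'$-tree with $\Psi_b\subseteq T'_{(a,b)}$ and $T'_{(a,b)}$ meeting $\Psi_a$. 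Another useful bound: a single bag can out-concentrate the stars of at most $2k+1$ indices (more would create a copy of $H_t$ with $t$ large inside the bag, hence a complete bipartite subgraph of order $\Omega(t)$ and treewidth $\Omega(t)$), and similarly for in-stars.

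The hard part — the main obstacle — is to combine this two-sided routing structure with the orthogonality bound $|S'_z\cap T'_w|\le k$, which applies to \emph{single} bags $z,w$ even though the hubs $\Xi_b$ and $\Psi_b$ may spread over arbitrarily many tree nodes, to force some single bag of $S'$ or $T'$ to contain a $K_{k,k}$ — contradicting treewidth at most $k-1$. Merely exhibiting many distinct hubs does not suffice, since that is consistent with a huge underlying tree; one must show that the hubs, together with the arcs $(a,b)$ that are forced to sit in the bags joining consecutive hubs along both trees, cannot be laid out in two trees simultaneously without concentrating a large clique in one bag. I expect this to need a careful charging or counting argument running along the two trees in parallel, and it is the step I would spend the most effort on.
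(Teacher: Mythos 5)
Your preparatory reductions are correct: the Helly-type dichotomy (for each index $j$ and each decomposition, some bag contains all of $I_j$ or some bag contains all of $O_j$), the observation that orthogonality forbids both decompositions from concentrating the same large star, and the pigeonhole/symmetry step producing a large index set $J$ on which $S$ out-concentrates and $T$ in-concentrates every index. But the proof is not complete, and the missing step is not a routine verification --- it is the entire content of the theorem. Your ``fully crossed'' configuration is just a special case of two $k$-orthogonal tree decompositions of a shift graph, and the tools you have assembled do not rule it out. Indeed, the constraints you extract from orthogonality --- each hub bag concentrates at most $O(k)$ stars, and for any two hub nodes $x,y$ at most $k$ arcs $(a,b)$ can have $x_a=x$ and $y_b=y$ --- are mutually satisfiable as abstract counting conditions (for instance they hold vacuously if each hub concentrated a single star), so no charging or counting argument of the kind you sketch can succeed without injecting a genuinely new structural ingredient. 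You acknowledge this yourself, so as written the argument stops exactly where the difficulty begins: forcing a single bag of $S'$ or $T'$ to contain a large complete bipartite pattern.

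The paper closes precisely this gap with a different input: the fact that shift graphs have large chromatic number, $\chi(H_n)\geq\log_2 n$, which your proposal never uses. Its key step, \cref{SplitLemma}, says that for any single tree decomposition of $H_m$ (with $m$ exponential in $\ell$), either some bag induces a subgraph of chromatic number more than $\ell$, or some bag already contains all pairs $(a,b)$ for sets $A,B$ of low/high indices with $|A|,|B|>\ell$. The proof orients the tree edges according to which side of the split has small chromatic number, takes a source node, and uses that four pieces of chromatic number at most $\ell$ cannot cover $H_{m/2}$; the resulting edge of the tree forces the grid $A\times B$ into one bag. Orthogonality bounds the treewidth, hence the chromatic number, of every bag by $k$, so applying the lemma once to $T$ and once more (inside the induced copy of a smaller shift graph on $A\cup B$) to $T'$ produces two bags whose intersection has size at least $(k+1)^2>k$. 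If you want to salvage your route, note that your intended conclusion (``some bag contains a $K_{k,k}$'') is essentially outcome (ii) of \cref{SplitLemma}; you would need an analogue of that lemma, and the chromatic-number argument is what makes it work --- clique and complete-bipartite counting inside bags, which is all your current setup provides, is too weak because it never exploits that $H_n$ has unbounded chromatic number despite being triangle-free.
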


\begin{lem}\label{SplitLemma}
For every integer $\ell\geq 1$ there exists an even integer $m>\ell$ such that  
for every tree decomposition $T$ of the shift graph $H=H_m$, 
	\begin{description}
	\item[(i)] $\chi(H[T_x])>\ell$ for some $x\in V(T)$, or
	\item[(ii)] there exists $x \in V(T)$, $A  \subseteq \{1,2,\ldots,\frac{m}{2}\}$, and 
	$B  \subseteq \{ \frac{m}{2} +1,\ldots,m\}$, such that $|A|,|B| > \ell$, and  $(a,b) \in T_x$ for all $a \in A$ and $b \in B$.
	\end{description}
\end{lem}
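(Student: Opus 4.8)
The plan is to prove the contrapositive: I will show that if $m$ is large enough in terms of $\ell$, then no tree decomposition $T$ of $H=H_m$ can simultaneously avoid (i) and (ii). So suppose $T$ is a tree decomposition of $H_m$ in which every bag induces a subgraph of chromatic number at most $\ell$, and no bag contains a set $A\times B$ with $A\subseteq\{1,\dots,\tfrac m2\}$, $B\subseteq\{\tfrac m2+1,\dots,m\}$, $|A|=|B|=\ell+1$. The goal is to manufacture a proper colouring of $H_m$ with a bounded number $g(\ell)$ of colours; since $\chi(H_m)\geq\log_2 m$, this is a contradiction once $m>2^{g(\ell)}$, and this is how the even integer $m$ promised by the lemma is chosen.

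Two structural features of $H_m$ will be exploited. First, the neighbourhood of a vertex $(i,j)$ is the disjoint union of the independent set $\{(i',i):i'<i\}$ of vertices ``ending at $i$'' and the independent set $\{(j,j'):j'>j\}$ of vertices ``starting at $j$''; moreover the first set is shared by all vertices $(i,\cdot)$ and the second by all vertices $(\cdot,j)$. Second, when $i\in\{\tfrac m2+1,\dots,m\}$ the ``ending at $i$'' neighbours that lie in the grid $C:=\{1,\dots,\tfrac m2\}\times\{\tfrac m2+1,\dots,m\}$ of condition (ii) are exactly a column $\{(a,i):a\in\{1,\dots,\tfrac m2\}\}$ of $C$, and symmetrically rows arise from the second coordinate; so (ii) is really a statement about how much of $C$ a single bag can absorb. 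The anchor vertices $c_b:=(b,b+1)$, which are adjacent to the whole column $\{(a,b):a\in\{1,\dots,\tfrac m2\}\}$, and $r_a:=(a-1,a)$, adjacent to the whole row $\{(a,b):b\in\{\tfrac m2+1,\dots,m\}\}$ — each family forming a path inside $H_m$ — are the tools for probing how columns and rows of $C$ are distributed over $T$.

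The colouring is built by a root-to-leaves sweep. Root $T$; for each vertex $v$ let $\mathrm{top}(v)$ be the node of the subtree $T_v$ closest to the root, and fix a proper $\ell$-colouring $\psi_x$ of $H[T_x]$ for every node $x$. The basic fact, used throughout, is that if $uv\in E(H)$ then $\mathrm{top}(u),\mathrm{top}(v)$ are comparable in the tree order and both $u,v$ lie in the bag of whichever of the two is the deeper node. Hence, processing the nodes of $T$ top-down, when we reach $x=\mathrm{top}(v)$ every neighbour $u$ of $v$ with $\mathrm{top}(u)$ strictly above $x$ is already coloured and lies in $T_x$; I colour $v$ by a pair $(\psi_x(v),\alpha(v))$, where $\alpha(v)$ is an auxiliary label chosen to differ from the auxiliary labels of those higher neighbours that happen to share $v$'s $\psi_x$-class, while adjacent vertices with equal top are separated automatically by the first coordinate. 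The heart of the matter is to show $\alpha$ can be drawn from a palette of size depending only on $\ell$: in $T_x$ the neighbours of $v=(i,j)$ lie in at most $\ell-1$ colour classes of $\psi_x$, each an independent set consisting of vertices that all end at $i$ or all start at $j$, and one must bound how many distinct auxiliary labels such vertices can already carry. This is exactly where the hypotheses are used — the absence of an $(\ell+1)\times(\ell+1)$ grid in any bag, fed together with the $c_b$ and $r_a$ anchors through $T$, forces the columns/rows of $C$ to be split across $T$ in a controlled way — combined with a coordinate-sensitive global scheme in the spirit of the $\lceil\log_2 m\rceil$-colouring of $H_m$.

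The main obstacle, and the bulk of the work, is precisely this last bound. The no-grid hypothesis by itself does not even bound $|N^{\uparrow}(v)|$, since a single column or row of $C$ may legitimately sit inside one bag; one must instead show that if many vertices ``ending at $i$'' had received many different labels then, by tracking the column anchor $c_i$ and its neighbourhood through $T$, one would locate a bag containing an $(\ell+1)\times(\ell+1)$ subgrid of $C$. Turning this into a clean quantitative statement — pinning down the dependence of $m$ on $\ell$, and ordering the sweep so that the relevant column/row information is available exactly when a vertex is coloured — is the delicate part; the remainder is bookkeeping.
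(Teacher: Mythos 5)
Your overall strategy---assume (i) and (ii) both fail and manufacture a proper colouring of $H_m$ with $g(\ell)$ colours, then contradict $\chi(H_m)\geq\log_2 m$---is a legitimate shape for an argument (the paper's only quantitative input is likewise the bound $\chi(H_{m/2})\geq\log_2(m/2)$), but your proposal has a genuine gap exactly where you admit ``the delicate part'' lies: the bound on the auxiliary palette is never established, and as described it cannot follow from the hypotheses in the local way you suggest. The negation of (ii) constrains only how a single bag meets the cross grid $C=\{1,\ldots,\tfrac m2\}\times\{\tfrac m2+1,\ldots,m\}$. But the problematic configurations in your sweep---a vertex $(i,j)$ whose already-coloured neighbours ``ending at $i$'' lie in one $\psi_x$-class and carry many distinct labels---arise just as well with $i,j\leq\tfrac m2$ (or both $>\tfrac m2$), i.e.\ entirely inside the half $H^1\cong H_{m/2}$ (resp.\ $H^2$), where the no-grid hypothesis says nothing at all. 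And the other hypothesis, that every bag induces an $\ell$-chromatic subgraph, certainly does not bound $\chi$ on its own: the decomposition with bags $B_i=\{(a,b):a\leq i\leq b\}$ has every bag bipartite while $\chi(H_m)\geq\log_2 m$ is unbounded. So any colouring scheme must exploit the negation of (ii) through a genuinely global mechanism tying the two halves to the structure of $T$; tracking the anchors $c_b$, $r_a$ and ``ordering the sweep'' does not supply one, and the central implication is asserted rather than proved.

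For comparison, the paper's proof is direct and short, and its global mechanism is the piece your sketch is missing. With $m=2^{4\ell+1}+2$ and $T$ taken of maximum degree $3$, split $H_m$ into the halves $H^1,H^2\cong H_{m/2}$, and for a subtree $X$ let $H^i(X)$ be the part of $H^i$ whose vertices lie only in bags of $X$. If no edge $e$ of $T$ had $\chi(H^1(X))\geq\ell+1$ and $\chi(H^1(Y))\geq\ell+1$ on both sides, then orienting each edge towards its low-chromatic side and taking a source node $v$ covers $H^1$ by at most four $\ell$-chromatic pieces $H^1(Z_1),H^1(Z_2),H^1(Z_3),H^1[T_v]$, contradicting $\chi(H_{m/2})\geq 4\ell+1$. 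Hence such an edge $e$ exists, and by the same argument one of its sides also carries a high-chromatic piece of $H^2$; taking $A$ to be $\ell+1$ second coordinates occurring in $H^1(X)$ and $B$ to be $\ell+1$ first coordinates occurring in $H^2(Y)$, every cross vertex $(a,b)$ with $a\in A$, $b\in B$ has one neighbour confined to bags of $X$ and another confined to bags of $Y$, so by connectivity it lies in the bag of an endpoint of $e$, which is exactly (ii). If you want to salvage your colouring-based contrapositive, you will need an ingredient of this kind; the purely local claim that the no-grid condition bounds the number of labels among a vertex's higher neighbours is not available.
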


\preproof\begin{proof}[Proof of \cref{SplitTheorem} assuming \cref{SplitLemma}.]  Let $\ell=m(k)$, and let $n=m(\ell)$, where $m$ is as in  \cref{SplitLemma}. Then $n>\ell>k$. We show that $n$ satisfies the theorem. Suppose not, and let $T$  and $T'$ be two $k$-orthogonal tree decompositions of $H_n$. Each bag $T_x$ induces a subgraph of treewidth  at most $k-1$ in $H_n$. Thus $\chi(H[T_x]) \leq k \leq \ell$. Thus  outcome (i) of \cref{SplitLemma} applied to $\ell$ and $H$ does not hold for $T$, and so outcome (ii) holds. Let $x$, $A$ and $B$  be as in this outcome. We may assume that $|A|=|B|$. Let $H'$ be the subgraph of $H$ induced by $\{(a,b)\in V(H): a,b \in A\cup B\}$. Then $H'\cong H_{|A\cup B|}$. Apply \cref{SplitLemma} to $H'$ and the tree decomposition  $T''$ of $H'$ induced by $T'$. Each bag of $T''$ induces a $k$-colourable subgraph of $H'$. Since $k<\ell$, outcome (ii) occurs. Thus there exist $A' \subseteq A$ and $B' \subseteq A$ and $x' \in V(T')$ such that $|A'|,|B'| \geq  k+1$ and   $(a,b) \in T'_{x'}$ for all $a \in A'$ and $b \in B'$. Thus $|T_x \cap T'_{x'}| \geq (k+1)^2 > k$, yielding the desired contradiction.
\end{proof}

\preproof\begin{proof}[Proof of \cref{SplitLemma}.] Define $m:=2^{4\ell+1}+2$. Let $T$ be a tree decomposition of the shift graph $H=H_m$. By splitting vertices of $T$, if necessary, we may assume that $T$ has maximum degree at most 3.   Let $H^1$ be the subgraph of $H$ induced by pairs $(a,b)$ such that $a,b \leq \frac{m}{2}$, and let  $H^2$ be the subgraph  induced by pairs $(a,b)$ with $a,b > \frac{m}{2}$. Then $H^1\cong H^2\cong H_{m/2}$. For a subtree $X$ of $T$ and $i \in \{1,2\}$, let $H^i(X)$ denote the subgraph of $H^i$ induced by $V(H^i) - \bigcup\{T_x:x\in V(T)\setminus V(X)\}$; that is, by those vertices of $H^i$ that only belong to the bags of $T$ corresponding to vertices of $X$. We may assume that (i) does not hold. 

Suppose that there does not exist an edge $e \in E(T)$ such that $\chi(H^1(X)) \geq \ell+1$ and $\chi(H^1(Y)) \geq \ell+1$, where $X$ and $Y$ are the components of $T-e$. That is, $\chi(H^1(X)) \leq \ell$ or $\chi(H^1(Y)) \leq \ell$. Orient $e$ towards $X$ if $\chi(H^1(X)) \leq \ell$ and towards $Y$ otherwise. Do this for every edge of $T$. Let $v$ be a source node in this orientation of $T$. Then $\chi(H^1(Z)) \leq \ell$ for each subtree $Z$ of $T$ with $v \not \in V(Z)$. Let $Z_1$, $Z_2$ and $Z_3$ be the (at most) three maximal subtrees of $T - v$. Then $H^1(Z_1)$, $H^1(Z_2)$, $H^1(Z_3)$ and $H^1[T_v]$ are four induced subgraphs of $H^1$ covering the vertex set of $H^1$, each with chromatic number at most $\ell$. Thus $\log_2 \frac{m}{2} \leq \chi(H^1) \leq 4\ell$, contradicting the choice of $m$. 

Thus there exists an edge $e \in E(T)$ such that $\chi(H^1(X)) \geq \ell+1$ and $\chi(H^1(Y)) \geq \ell+1$, where $X$ and $Y$ are the components of $T-e$. Repeating the argument in the previous paragraph for $H^2$, we may assume without loss of generality that $\chi(H^2(Y)) \geq \ell+1$. Let $A$ be the set of all $a $ such that $(b,a) \in V(H^1(X))$  for some $b < a$. Then $|A| \geq \ell+1$, since $H^1(X)$ can be properly coloured using one colour for each element of $A$. Similarly, if  $B$ is the set of all $b$ such that $(b,a) \in V(H^2(Y))$  for some $a > b$, then $|B| \geq \ell+1$. Consider $(a,b) \in V(H)$ with $a \in A$ and $b \in B$. Then $(a,b)$ has a neighbour in $V(H^1(X))$ and therefore $(a,b) \in T_x$ for some $x \in V(X)$. Similarly,   $(a,b) \in T_y$ for some $y \in V(Y)$. If $z$ is an endpoint of $e$, then $z$ lies on the $xy$-path in $T$, and thus $(a,b) \in T_z$ and (ii) holds.
\end{proof}

The above results suggest a positive answer to the following question. 
 
\begin{op}
\label{ttwChiBounded}
Is there a function $f$ such that every graph that has two $k$-orthogonal tree decompositions is $f(k)$-colourable? That is, is $\chi(G)\leq f(\ttw(G))$ for every graph $G$?
\end{op}

This question asks whether graphs that have two tree decompositions with bounded intersections have bounded chromatic number. Note that graphs that have two path decompositions with bounded intersections have bounded chromatic number (since rectangle intersection graphs are $\chi$-bounded, as proved by \citet{AG60}). Also note that the converse to this question is false: there are 3-colourable graph classes with $\ttw$ unbounded. Suppose the complete tripartite graph $K_{n,n,n}$ has two $k$-orthogonal tree decompositions. Then each decomposition has at least $2n+1$ vertices in some bag (since $\tw(K_{n,n,n})=2n$). The intersection of these two bags has size at least $n+2$. Thus $k\geq n+2$ and $\ttw(K_{n,n,n})\geq n+2$.

\paragraph{Note.} Independently of this paper, \citet{Stav15,Stav16} introduced the \emph{$i$-medianwidth} of a graph for each $i\geq 1$. It is easily seen that $\ttw(G)$ equals the 2-medianwidth of $G$.  Following the initial release of this paper, \citet{FJMTW17} answered  \cref{ttwChiBounded} in the negative. 



\bibliographystyle{myNatbibStyle}
\bibliography{myBibliography}
\end{document}